\DeclareMathOperator{\pd}{pd} \DeclareMathOperator{\id}{id}
\DeclareMathOperator{\Ext}{Ext} \DeclareMathOperator{\Hom}{Hom}
\DeclareMathOperator{\fd}{fd} 
\DeclareMathOperator{\depth}{depth} \DeclareMathOperator{\Mod}{Mod}
\DeclareMathOperator{\wdim}{wgl-dim}
\DeclareMathOperator{\gdim}{l-gl-dim}
 \DeclareMathOperator{\Tor}{Tor}
\DeclareMathOperator{\FPD}{FPD}  \DeclareMathOperator{\fp}{fp}
\DeclareMathOperator{\FFD}{FFD}
\DeclareMathOperator{\lr}{\longrightarrow}
\DeclareMathOperator{\E}{E}
\DeclareMathOperator{\lMod}{lMod}
\newtheorem{thm}{Theorem}[section]
\newtheorem{cor}[thm]{Corollary}
\newtheorem{lem}[thm]{Lemma}
\newtheorem{prop}[thm]{Proposition}
\theoremstyle{definition}
\newtheorem{defin}[thm]{Definition}
\begin{document}

	
	\renewcommand{\thefootnote}{\arabic{footnote}}
	\setcounter{footnote}{0}
	
	\title{On flatness and coherence with respect to modules of flat dimension at most one}

	\author{ Samir Bouchiba\footnote {Corresponding author: s.bouchiba@fs.umi.ac.ma} {} and Mouhssine El-Arabi\footnote {elarabimouh@gmail.com}\\
		{\footnotesize Department of Mathematics, Faculty of Sciences,}
		{\footnotesize University Moulay Ismail,}\\ {\small Meknes,
			Morocco}\footnote {\emph{Mathematics Subject Classification 2010}: 13D02;
			13D05; 13D07; 16E05; 16E10.} 
		\footnote {\emph{Key words and phrases}: $\mathcal F_1$-flat module; $\mathcal F_1^{\fp}$-flat module; $\mathcal F_1^{\fp}$-coherent ring; Tor-torsion theory; global dimension.}}

	\maketitle

\begin{abstract}
	This paper introduces and studies homological properties of new classes of modules, namely, the $\mathcal F_1$-flat modules and the $\mathcal F_1^{\fp}$-flat modules, where $\mathcal F_1$ stands for the class of right modules of flat dimension at most one and $\mathcal F_1^{\fp}$ its subclass consisting of finitely presented elements. This leads us to introduce a new class of rings that we term $\mathcal F_1^{\fp}$-coherent rings as they behave nicely with respect to $\mathcal F_1^{\fp}$-flat modules as do coherent rings with respect to flat modules. The new class of $\mathcal F_1^{\fp}$-coherent rings turns out to be a large one and it includes coherent rings, perfect rings, semi-hereditary rings and all rings $R$ such that $\lim\limits_{\lr}\mathcal P_1=\mathcal F_1$. As a particular case of rings satisfying $\lim\limits_{\lr}\mathcal P_1=\mathcal F_1$ figures the important class of integral domains.
	\end{abstract}

\section{Introduction}
Throughout this paper, $R$ denotes an associative ring with unit element and the $R$-modules are supposed to be
unital. Given an $R$-module $M$, $M^{+}$ denotes the character $R$-module of $M$, that is, $M^{+}:=\Hom_{\mathbb{Z}}\Big (M, \dfrac{\mathbb{Q}}{\mathbb{Z}}\Big )$, $\pd_R(M)$ denotes the projective dimension of $M$,
$\id_R(M)$ the injective dimension of $M$ and $\fd_R(M)$ the flat dimension
of $M$. As for the global dimensions, $\gdim(R)$ designates the left global dimension of $R$ and $\wdim(R)$ the
weak global dimension of $R$. $\Mod(R)$ stands for the class of all right $R$-modules, $\mathcal P(R)$ stands for the class of all projective right $R$-modules, $\mathcal {I}(R)$ the class of all injective right
$R$-modules and $\mathcal F(R)$ the class of flat right modules. Also, we denote by $\mathcal F_1$ (resp., $\mathcal P_1$) the class of right $R$-modules $M$ such that $\fd_R(M)\leq 1$ (resp., $\pd_R(M)\leq 1$) and by $\mathcal F_1^{\fp}$ (resp., $\mathcal P_1^{\fp}$) the subclass of $\mathcal F_1$ (resp., $\mathcal P_1$) consisting of right $R$-modules which are finitely presented. Any unreferenced material is standard as in \cite{R,T,W,X}.

Our main purpose in this paper is to study the homological properties of the Tor-orthogonal classes $\mathcal F_1^{\top}$ and $\mathcal F_1^{\fp\top}$ of $\mathcal F_1$ and $\mathcal F_1^{\fp}$ that we term the class of $\mathcal F_1$-flat modules and the class of $\mathcal F_1^{\fp}$-flat modules, respectively, over an arbitrary ring $R$. Let us denote these two classes by $\mathcal F_1\mathcal F(R):=\mathcal F_1^{\top}$ and $\mathcal F_1^{\fp}\mathcal F(R):=\mathcal F_1^{\fp\top}$. Note that in the context of integral domains, by \cite[Lemma 2.3]{L}, $\mathcal F_1\mathcal F(R)$ coincides with the class of torsion-free $R$-modules. In other words, the $\mathcal F_1$-flat module notion extends that of the torsion-free module from integral domains to arbitrary rings, and thus our study will permit, in particular, to shed light on homological properties of torsion-free modules in the case when $R$ is a domain. 
In section 2, we introduce and study the $\mathcal F_1$-flat modules and $\mathcal F_1^{\fp}$-flat modules.  We mainly seek conditions on rings $R$ for which the two notions of $\mathcal F_1$-flat module and $\mathcal F_1^{\fp}$-flat module collapse. In particular, we prove that if $\lim\limits_{\lr}\mathcal P_1=\mathcal F_1$, then $\lim\limits_{\lr}\mathcal F_1^{\fp}=\mathcal F_1$ and thus $\mathcal F_1\mathcal F(R)=\mathcal F_1^{\fp}\mathcal F(R)$. This leads us to seek suffisant conditions on a ring $R$ for which $\lim\limits_{\lr}\mathcal F_1^{\fp}=\mathcal F_1$ or more generally $\lim\limits_{\lr}\mathcal P_1=\mathcal F_1$. It is worthwhile pointing out that if $R$ is an integral, then $\lim\limits_{\lr}\mathcal P_1=\mathcal F_1$ \cite[Theorem 3.5]{HT} and that this theorem is subsequently generalized by Bazzoni and Herbera in \cite[Theorem 6.7 and Corollary 6.8]{BH}.
Moreover, we exhibit an example of a ring $R$ such that $\lim\limits_{\lr}\mathcal F_1^{\fp}\neq\mathcal F_1$ showing that, in general, the concerned classes $\mathcal F_1\mathcal F(R)$ and $\mathcal F_1^{\fp}\mathcal F(R)$ are different.

Section 3 introduces the $\mathcal F_1^{\fp}$-coherent rings. This new class of rings behaves nicely with respect to $\mathcal F_1^{\fp}$-flat modules as do coherent rings with respect to flat modules. In particular, it is proven that a ring $R$ is $\mathcal F_1^{\fp}$-coherent if and only if any product of $\mathcal F_1^{\fp}$-flat modules is $\mathcal F_1^{\fp}$-flat if and only if $\lim\limits_{\lr}\mathcal P_1^{\fp}=\lim\limits_{\lr}\mathcal F_1^{\fp}$. It turns out that the class of $\mathcal F_1^{\fp}$-coherent rings is a large one and it includes integral domains, coherent rings, semi-hereditary rings and perfect rings which permits to unify all these classes of rings into one. Also, the class of $\mathcal F_1^{\fp}$-coherent rings includes all rings such that $\lim\limits_{\lr}\mathcal P_1=\mathcal F_1$.   Finally, we characterize rings $R$ for which all $R$-modules are $\mathcal F_1^{\fp}$-flat and discuss the homological dimensions of the $R$-modules as well as the global dimensions of $R$ in terms of the homological dimensions of the $\mathcal F_1$-flat modules.


\section{$\mathcal F_1$-flat and $\mathcal F_1^{\fp}$-flat modules}

	This section introduces and studies the notions of $\mathcal F_1$-flat and $\mathcal F_1^{\fp}$-flat modules as being the $\Tor$-orthogonal classes $\mathcal F_1^{\top}$ and $\mathcal F_1^{\fp\top}$ of $\mathcal F_1$ and $\mathcal F_1^{\fp}$. Observe that $\mathcal F_1\mathcal F(R)\subseteq\mathcal F_1^{\fp}\mathcal F(R)$. We mainly seek conditions on rings $R$ for which the two notions of $\mathcal F_1$-flat module and $\mathcal F_1^{\fp}$-flat module collapse. In particular, we prove that if $\lim\limits_{\lr}\mathcal F_1^{\fp}=\mathcal F_1$, then $\mathcal F_1\mathcal F(R)=\mathcal F_1^{\fp}\mathcal F(R)$. Also, we exhibit an example $R$ such that $\lim\limits_{\lr}\mathcal F_1^{\fp}\neq\mathcal F_1$ showing that, in general, the concerned classes $\mathcal F_1\mathcal F(R)$ and $\mathcal F_1^{\fp}\mathcal F(R)$ are different.\\
	
	Let $\mathcal C$ be a class of right $R$-modules and $\mathcal D$ be a class of left $R$-modules. We put $$\mathcal C^\top=\ker\Tor_1^R(\mathcal C,\text -)=\{\text {left } R\text {-modules } M:\Tor_1^R(C,M)=0 \text{ for all }C\in\mathcal C\}$$ and $$^\top\mathcal D=\ker\Tor_1^R(\text -,\mathcal D)=\{\text {right }R\text{-modules } N:\Tor_1^R(N,D)=0 \text{ for all }D\in\mathcal D\}.$$
	 A pair $(\mathcal A,\mathcal B)$ of classes of $R$-modules is called a $\Tor$-torsion theory if $\mathcal A=$ $^\top\mathcal B$ and $\mathcal B=\mathcal A^\top$. Let $\mathcal C$ be a class of right $R$-modules. Then it is easy to check that $(^\top(\mathcal C^\top),\mathcal C^\top)$ is a Tor-torsion theory. Also, we put $\widehat{\mathcal C}:=$ $^\top(\mathcal C^\top)$. Note that $\lim\limits_{\lr}\mathcal C\subseteq \widehat{\mathcal C}$ as $\widehat {\mathcal C}$ is stable under direct limits. A $\Tor$-torsion theory $(\mathcal A,\mathcal B)$ is said to be generated by $\mathcal C$ if $A=\widehat{\mathcal C}$ (and thus $B=\mathcal C^\top$). Let $(\mathcal A_1,\mathcal B_1)$ and $(\mathcal A_2,\mathcal B_2)$ two $\Tor$-torsion theories generated by $\mathcal C_1$ and $\mathcal C_2$, respectively. Then the two pairs $(\mathcal A_1,\mathcal B_1)$ and $ (\mathcal A_2,\mathcal B_2)$ coincide if and only if $\widehat {\mathcal C_1}=\widehat {\mathcal C_2}$.\\
	 
	 We begin by proving the following lemma of general interest.
	 
	 \begin{lem}\label{2.1}
	 	Let $R$ be a ring and $\mathcal C$ and $\mathcal D$ be classes of right $R$-modules.\\
	 	1) $(\lim\limits_{\lr}\mathcal C)^{\top}=\widehat{\mathcal C}^{\top}=\mathcal C^{\top}$.\\
	 	2) If $\mathcal C\subseteq\mathcal D\subseteq \widehat {\mathcal C}$, then $\widehat{\mathcal C}=\widehat{\mathcal D}$.\\
	 	3) If $\lim\limits_{\lr}\mathcal C=\lim\limits_{\lr}\mathcal D$, then $\mathcal C^{\top}=\mathcal D^{\top}$ and  $\widehat{\mathcal C}=\widehat{\mathcal D}$.
	 \end{lem}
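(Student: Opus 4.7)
The plan is to reduce everything to part (1), which is the heart of the lemma; parts (2) and (3) will then follow by formal manipulation of the $\Tor$-orthogonal operators. The tool we constantly use is the order-reversing property of $(-)^{\top}$ and ${}^{\top}(-)$: if $\mathcal A\subseteq\mathcal B$, then $\mathcal B^{\top}\subseteq\mathcal A^{\top}$ and ${}^{\top}\mathcal B\subseteq{}^{\top}\mathcal A$.

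For (1), the key inclusions are $\mathcal C\subseteq\lim\limits_{\lr}\mathcal C\subseteq\widehat{\mathcal C}$. The first is obvious (constant direct systems) and the second is noted in the excerpt since $\widehat{\mathcal C}={}^{\top}(\mathcal C^{\top})$ is stable under direct limits. Applying $(-)^{\top}$ reverses these to $\widehat{\mathcal C}^{\top}\subseteq(\lim\limits_{\lr}\mathcal C)^{\top}\subseteq\mathcal C^{\top}$. So the whole statement collapses to proving $\mathcal C^{\top}\subseteq\widehat{\mathcal C}^{\top}$. But this is a general fact about Galois-type orthogonalities: for any class $\mathcal E$ on one side, $\mathcal E\subseteq({}^{\top}\mathcal E)^{\top}$; specialising to $\mathcal E=\mathcal C^{\top}$ yields $\mathcal C^{\top}\subseteq ({}^{\top}(\mathcal C^{\top}))^{\top}=\widehat{\mathcal C}^{\top}$. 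Unpacked, this just says that if $\Tor_1^{R}(C,M)=0$ for every $C\in\mathcal C$, then by the very definition of ${}^{\top}(\mathcal C^{\top})$, every $N\in\widehat{\mathcal C}$ is $\Tor$-orthogonal to $M$, so $M\in\widehat{\mathcal C}^{\top}$.

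For (2), I will show both inclusions $\widehat{\mathcal C}\subseteq\widehat{\mathcal D}$ and $\widehat{\mathcal D}\subseteq\widehat{\mathcal C}$. The first follows directly from $\mathcal C\subseteq\mathcal D$ by applying $(-)^{\top}$ and then ${}^{\top}(-)$. For the second, $\mathcal D\subseteq\widehat{\mathcal C}$ yields $\widehat{\mathcal C}^{\top}\subseteq\mathcal D^{\top}$, hence ${}^{\top}(\mathcal D^{\top})\subseteq{}^{\top}(\widehat{\mathcal C}^{\top})$. By (1) we have $\widehat{\mathcal C}^{\top}=\mathcal C^{\top}$, so ${}^{\top}(\widehat{\mathcal C}^{\top})={}^{\top}(\mathcal C^{\top})=\widehat{\mathcal C}$, giving $\widehat{\mathcal D}\subseteq\widehat{\mathcal C}$. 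This idempotence $\widehat{\widehat{\mathcal C}}=\widehat{\mathcal C}$ is itself an immediate consequence of (1).

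For (3), the hypothesis $\lim\limits_{\lr}\mathcal C=\lim\limits_{\lr}\mathcal D$ gives $(\lim\limits_{\lr}\mathcal C)^{\top}=(\lim\limits_{\lr}\mathcal D)^{\top}$; by (1) the left-hand side equals $\mathcal C^{\top}$ and the right-hand side equals $\mathcal D^{\top}$, whence $\mathcal C^{\top}=\mathcal D^{\top}$ and therefore $\widehat{\mathcal C}={}^{\top}(\mathcal C^{\top})={}^{\top}(\mathcal D^{\top})=\widehat{\mathcal D}$. The whole argument is a purely formal manipulation; the only point requiring minor care is bookkeeping with the direction of inclusions under the orthogonal operators, and I do not anticipate any substantive obstacle.
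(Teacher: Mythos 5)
Your proof is correct and follows essentially the same route as the paper's: part (1) via the sandwich $\mathcal C\subseteq\lim\limits_{\lr}\mathcal C\subseteq\widehat{\mathcal C}$ together with the Galois-connection identity $\widehat{\mathcal C}^{\top}=\mathcal C^{\top}$, part (2) via monotonicity and idempotence of $\widehat{(-)}$, and part (3) as a formal consequence of (1). The only difference is that you spell out the justification of $\mathcal C^{\top}\subseteq\widehat{\mathcal C}^{\top}$, which the paper takes as known from the fact that $({}^{\top}(\mathcal C^{\top}),\mathcal C^{\top})$ is a Tor-torsion theory.
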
 
 
 \begin{proof} 1) Note that $\widehat{\mathcal C}^{\top}=\mathcal C^{\top}$ and that $\widehat{\mathcal C}^{\top}\subseteq (\lim\limits_{\lr}\mathcal C)^{\top}\subseteq \mathcal C^{\top}$ as $\mathcal C\subseteq \lim\limits_{\lr}\mathcal C \subseteq \widehat {\mathcal C}$. Then the result easily follows. \\
2) Assume that $\mathcal C\subseteq\mathcal D\subseteq \widehat {\mathcal C}$. Then $\widehat {\mathcal C}\subseteq \widehat {\mathcal D}\subseteq \widehat {\widehat {\mathcal C}}$. Now, as $\widehat {\widehat {\mathcal C}}=\widehat {\mathcal C}$, we get $\widehat {\mathcal C}=\widehat {\mathcal D}$, as desired.\\
3)  It follows easily from (1).\end{proof}

\begin{defin}
	1) A left $R$-module $M$ is said to be $\mathcal F_1$-flat if
	$\Tor^1_{R}(H,M)=0$ for each right module $H\in \mathcal F_{1}$, that is, $M\in \mathcal F_1^\top$. The class of all left $\mathcal F_1$-flat modules is denoted by $\mathcal F_1\mathcal F(R)$.\\
	2) A left $R$-module $M$ is said to be $\mathcal F_1^{\fp}$-flat if
	$\Tor^1_{R}(H,M)=0$ for each right module $H\in \mathcal F_1^{\fp}$, that is, $M\in\mathcal F_1^{\fp\top}$. The class of all left $\mathcal F_1^{\fp}$-flat modules is denoted by $\mathcal F_1^{\fp}\mathcal F(R)$.
\end{defin}



Next, we list some properties of $\mathcal F_1$-flat and $\mathcal F_1^{\fp}$-flat modules.

\begin{prop}\label{2.2} Let $R$ be a ring. Then\\
	1)  $\mathcal F_1\mathcal F(R)\subseteq \mathcal F_1^{\fp}\mathcal F(R)$.\\
		2) $\mathcal F_1\mathcal F(R)$ and $\mathcal F_1^{\fp}\mathcal F(R)$ are stable under direct sums and direct limits.\\
	3) $\mathcal F_1\mathcal F(R)$ and $\mathcal F_1^{\fp}\mathcal F(R)$ are stable under submodules.\\
	4) Any left ideal of $R$ is $\mathcal F_1$-flat and $\mathcal F_1^{\fp}$-flat.
\end{prop}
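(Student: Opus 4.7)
The plan is essentially to reduce everything to the definition and to two well-known facts about $\Tor_1^R$: it commutes with direct sums and direct limits in the second argument, and it vanishes in degree higher than the flat dimension of the first argument.

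Part (1) is immediate from the inclusion $\mathcal F_1^{\fp}\subseteq\mathcal F_1$: if $\Tor_1^R(H,M)=0$ for every $H\in\mathcal F_1$, then certainly for every $H\in\mathcal F_1^{\fp}$.

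For part (2), I would use the natural isomorphisms
$$\Tor_1^R\Bigl(H,\bigoplus_i M_i\Bigr)\cong\bigoplus_i\Tor_1^R(H,M_i),\qquad \Tor_1^R\Bigl(H,\lim\limits_{\lr}M_i\Bigr)\cong\lim\limits_{\lr}\Tor_1^R(H,M_i),$$
applied with $H\in\mathcal F_1$ (respectively $H\in\mathcal F_1^{\fp}$); vanishing of each summand or each term of the direct system propagates to the colimit, giving closure of both classes under direct sums and direct limits.

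For part (3), let $N$ be a submodule of a left $R$-module $M$ lying in $\mathcal F_1\mathcal F(R)$, and consider the short exact sequence $0\to N\to M\to M/N\to 0$. For any $H\in\mathcal F_1$ the associated long exact sequence of Tor contains the piece
$$\Tor_2^R(H,M/N)\lr\Tor_1^R(H,N)\lr\Tor_1^R(H,M).$$
Since $\fd_R(H)\leq 1$, the leftmost group vanishes, and the rightmost vanishes by assumption on $M$; hence $\Tor_1^R(H,N)=0$ and $N\in\mathcal F_1\mathcal F(R)$. The same argument, restricted to $H\in\mathcal F_1^{\fp}\subseteq\mathcal F_1$, handles $\mathcal F_1^{\fp}\mathcal F(R)$. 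For part (4), the ring $R$ viewed as a left $R$-module is free, hence flat, so $\Tor_1^R(H,R)=0$ for every right $R$-module $H$, placing $R$ in both classes; any left ideal of $R$ is then an $\mathcal F_1$-flat and $\mathcal F_1^{\fp}$-flat module by the closure under submodules proved in (3).

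The only step requiring any thought is (3), and the key observation there is precisely the defining bound $\fd_R(H)\leq 1$ on members of $\mathcal F_1$, which is what forces $\Tor_2^R(H,M/N)=0$ and makes the closure under submodules work without any further hypothesis on $R$.
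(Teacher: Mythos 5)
Your proposal is correct and follows essentially the same route as the paper: parts (1) and (2) from the inclusion $\mathcal F_1^{\fp}\subseteq\mathcal F_1$ and the commutation of $\Tor_1^R(H,-)$ with direct sums and direct limits, part (3) from the long exact sequence together with the vanishing of $\Tor_2^R(H,M/N)$ forced by $\fd_R(H)\leq 1$, and part (4) by viewing a left ideal as a submodule of the flat module $R$ and applying (3). No gaps.
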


\begin{proof}
	1) and 2) are clear as the functor $\Tor_n^R (H,-)$ commutes with direct sums and direct limits for any right $R$-module $H$ and each positive integer $n$.\\ 
	3) Let $N$ be a submodule of a left $\mathcal F_1$-flat module $M$. Let $H\in\mathcal F_1$be a right module and consider the short exact sequence $0\longrightarrow N\longrightarrow M\longrightarrow \dfrac MN\longrightarrow 0$ of left modules. Then applying the functor $H\otimes_R-$, we get the exact sequence $$\Tor_2^R\Big (H,\dfrac MN\Big )\longrightarrow \Tor_1^R(H,N)\longrightarrow \Tor_1^R(H,M).$$ Now, as $\Tor_1^R(H,M)=0$ since $M$ is $\mathcal F_1$-flat and $\Tor_2^R\Big (H,\dfrac MN\Big )=0$ as fd$_R(H)\leq 1$, we deduce that $\Tor_1^R(H,N)=0$. Therefore $N$ is a $\mathcal F_1$-flat left $R$-module, as desired.\\
	4) It follows from 3).
\end{proof}

\begin{prop}\label{2.3}  Let $R$ be a ring and $M$ a left $R$-module. Then the following assertions are equivalent:
	
	1) $M$ is $\mathcal F_1$-flat (resp., $\mathcal F_1^{\fp}$-flat);
	
	2) Each finitely generated submodule of $M$ is $\mathcal F_1$-flat (resp., $\mathcal F_1^{\fp}$-flat);
	
	3) $M=\lim\limits_{\rightarrow}M_i$ such that each $M_i$ is a finitely generated $\mathcal F_1$-flat module (resp., $\mathcal F_1^{\fp}$-flat module).\\
	 
\end{prop}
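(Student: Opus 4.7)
The plan is to set up a cycle of implications $(1) \Rightarrow (2) \Rightarrow (3) \Rightarrow (1)$, handling the $\mathcal F_1$-flat and $\mathcal F_1^{\fp}$-flat cases in parallel since the arguments are formally identical and rely only on the general closure properties established in Proposition \ref{2.2}.

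For $(1) \Rightarrow (2)$, if $M$ is $\mathcal F_1$-flat (resp.\ $\mathcal F_1^{\fp}$-flat), then every submodule of $M$, and in particular every finitely generated submodule, is again $\mathcal F_1$-flat (resp.\ $\mathcal F_1^{\fp}$-flat) by the closure under submodules proved in Proposition \ref{2.2}(3).

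For $(2) \Rightarrow (3)$, I would invoke the standard fact that every left $R$-module is the direct limit of its family of finitely generated submodules, directed by inclusion: writing $\{M_i\}_{i\in I}$ for this family, one has $M = \lim\limits_{\rightarrow} M_i$. Hypothesis (2) guarantees that each $M_i$ is finitely generated and $\mathcal F_1$-flat (resp.\ $\mathcal F_1^{\fp}$-flat), which is precisely the presentation required by (3).

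For $(3) \Rightarrow (1)$, it suffices to apply the closure of $\mathcal F_1\mathcal F(R)$ (resp.\ $\mathcal F_1^{\fp}\mathcal F(R)$) under direct limits, established in Proposition \ref{2.2}(2): since $M$ is a direct limit of modules in the class, $M$ itself lies in the class. None of the steps poses a real obstacle; the only subtlety worth flagging is that the argument relies in an essential way on $\mathrm{Tor}_1^R(H,-)$ commuting with direct limits for each fixed $H \in \mathcal F_1$ (resp.\ $H \in \mathcal F_1^{\fp}$), which is the content already used in Proposition \ref{2.2}(2).
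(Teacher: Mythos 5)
Your proposal is correct and follows exactly the same route as the paper: the cycle $(1)\Rightarrow(2)\Rightarrow(3)\Rightarrow(1)$ using closure under submodules (Proposition \ref{2.2}(3)), the fact that $M$ is the direct limit of its finitely generated submodules, and closure under direct limits (Proposition \ref{2.2}(2)). No differences worth noting.
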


\begin{proof}
1) $\Rightarrow$ 2) It is direct as $\mathcal F_1\mathcal F(R)$ is stable under submodules.\\
2) $\Rightarrow$ 3) It is easy as $M$ is direct limit of the direct set consisting of its submodules.\\
3) $\Rightarrow$ 1) It holds as $\mathcal F_1\mathcal F(R)$ is stable under direct limits.
\end{proof}

\begin{prop}\label{2.4} Let $R$ be a ring. Then\\
1)	The pair $(\mathcal F_1,\mathcal F_1\mathcal F(R))$ is a $\Tor$-torsion theory.\\
2) $(\widehat {\mathcal F_1^{\fp}},\mathcal F_1^{\fp}\mathcal F(R))$ is a $\Tor$-torsion theory with $\lim\limits_{\lr}\mathcal F_1^{\fp}\subseteq \widehat {\mathcal F_1^{\fp}}\subseteq \mathcal F_1$.
\end{prop}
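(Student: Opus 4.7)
The claim in (1) reduces to the nontrivial inclusion $\widehat{\mathcal F_1}\subseteq\mathcal F_1$, since $\mathcal C\subseteq\widehat{\mathcal C}$ holds for any class $\mathcal C$. The strategy is to pick $N\in\widehat{\mathcal F_1}$, that is, a right $R$-module with $\Tor_1^R(N,M)=0$ for every $M\in\mathcal F_1^{\top}$, and verify $\fd_R(N)\leq 1$ by showing $\Tor_2^R(N,L)=0$ for every left $R$-module $L$. Choose a free presentation $0\to K\to F\to L\to 0$; the long exact $\Tor$-sequence gives $\Tor_2^R(N,L)\cong\Tor_1^R(N,K)$. The key observation is that $K$ automatically lies in $\mathcal F_1^{\top}=\mathcal F_1\mathcal F(R)$: for any $H\in\mathcal F_1$ the same short exact sequence yields $\Tor_1^R(H,K)\cong\Tor_2^R(H,L)$, and the latter vanishes because $\fd_R(H)\leq 1$. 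Plugging $K$ into the hypothesis on $N$ then forces $\Tor_1^R(N,K)=0$ and hence $\Tor_2^R(N,L)=0$, as required.

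For part (2), that $(\widehat{\mathcal F_1^{\fp}},\mathcal F_1^{\fp}\mathcal F(R))$ is a Tor-torsion theory is immediate from the general remark recorded just after the definition of $(-)^{\top}$ and ${}^{\top}(-)$, namely that $({}^{\top}(\mathcal C^{\top}),\mathcal C^{\top})$ is a Tor-torsion theory for any class $\mathcal C$; specialize this to $\mathcal C=\mathcal F_1^{\fp}$. The inclusion $\lim\limits_{\lr}\mathcal F_1^{\fp}\subseteq\widehat{\mathcal F_1^{\fp}}$ was already observed in the excerpt, because $\widehat{\mathcal C}$ is always stable under direct limits. For $\widehat{\mathcal F_1^{\fp}}\subseteq\mathcal F_1$ I would use monotonicity of the orthogonal operations: from $\mathcal F_1^{\fp}\subseteq\mathcal F_1$ one obtains $\mathcal F_1^{\top}\subseteq (\mathcal F_1^{\fp})^{\top}$, whence
$$\widehat{\mathcal F_1^{\fp}}={}^{\top}((\mathcal F_1^{\fp})^{\top})\subseteq{}^{\top}(\mathcal F_1^{\top})=\widehat{\mathcal F_1},$$
and $\widehat{\mathcal F_1}=\mathcal F_1$ by part (1).

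The main (and essentially only) obstacle is the dimension-shift trick in part (1): one must see that syzygies of arbitrary left modules taken with respect to arbitrary flat (e.g.\ free) presentations land in $\mathcal F_1^{\top}$, precisely because the defining condition $\fd_R(H)\leq 1$ on members of $\mathcal F_1$ kills $\Tor_2^R(H,-)$. Once this observation is isolated, everything else is formal manipulation of the operators $(-)^{\top}$ and ${}^{\top}(-)$, together with Lemma \ref{2.1}.
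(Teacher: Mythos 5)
Your proof is correct, and for part (2) it fills in exactly the details the paper leaves as ``straightforward'' (the formal Tor-torsion-theory fact, closure of $\widehat{(-)}$ under direct limits, and the order-reversal $\mathcal F_1^{\top}\subseteq(\mathcal F_1^{\fp})^{\top}$ combined with part (1)). For part (1) your route differs from the paper's in one concrete respect: the paper tests only against cyclic modules, observing that every left ideal $I$ is $\mathcal F_1$-flat (its Proposition on stability under submodules), deducing $\Tor_2^R(H,R/I)=0$ for all left ideals $I$, and then invoking the Baer-type criterion for flat dimension (\cite[Lemma 9.18]{R}) to conclude $\fd_R(H)\leq 1$. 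You instead show $\Tor_2^R(N,L)=0$ for \emph{every} left module $L$ by proving directly that the first syzygy $K$ in a free presentation of $L$ lies in $\mathcal F_1^{\top}$ via the dimension shift $\Tor_1^R(H,K)\cong\Tor_2^R(H,L)$. Your version is more self-contained (no appeal to the external flat-dimension criterion), at the cost of quantifying over all modules rather than just the cyclic ones; the paper's version is shorter because it leans on the cited lemma and on the already-established fact that ideals are $\mathcal F_1$-flat. Both arguments hinge on the same essential observation, namely that submodules of free modules are killed by $\Tor_1^R(H,-)$ for $H\in\mathcal F_1$.
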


\begin{proof}
1)	It suffices to prove that $^{\top}\mathcal F_1\mathcal F(R)=\mathcal F_1$. First, note that $\mathcal F_1\subseteq$ $^{\top}\mathcal F_1\mathcal F(R)$. Conversely, let $H\in$ $^{\top}\mathcal F_1\mathcal F(R)$. Then $\Tor_1^R(H,M)=0$ for any $M\in\mathcal F_1\mathcal F(R)$. Now, let $I$ be a left ideal of $R$. Then, as $I$ is $\mathcal F_1$-flat, we get $\Tor_1^R(H,I)=0$. Therefore $\Tor_2^R\Big (H,\dfrac RI\Big )=0$ for each left ideal $I$ of $R$. It follows that $\fd_R(H)\leq 1$ (see \cite[Lemma 9.18]{R}), that is, $H\in \mathcal F_1$, as desired.\\
2) It is straighforward in view of the above discussion.
\end{proof}

Similarly, next we introduce the class of $\mathcal P_1$-flat modules and the class of $\mathcal P_1^{\fp}$-flat modules.

\begin{defin} Let $R$ be a ring.\\
	1) A left $R$-module $M$ is said to be $\mathcal P_1$-flat if
	$\Tor^1_{R}(H,M)=0$ for each right module $H\in \mathcal P_{1}$, that is, $M\in \mathcal P_1^\top$. The class of all left $\mathcal P_1$-flat modules is denoted by $\mathcal P_1\mathcal F(R)$.\\
	2) A left $R$-module $M$ is said to be $\mathcal P_1^{\fp}$-flat if
	$\Tor^1_{R}(H,M)=0$ for each right module $H\in \mathcal P_1^{\fp}$, that is, $M\in\mathcal P_1^{\fp\top}$. The class of all $\mathcal P_1^{\fp}$-flat modules is denoted by $\mathcal P_1^{\fp}\mathcal F(R)$.
\end{defin}

It is known that any module is a direct limit of a direct system of finitely presented modules. One then may naturally wonder whether $\lim\limits_{\lr}\mathcal F_1^{\fp}=\mathcal F_1(=\lim\limits_{\lr}\mathcal F_1)$ in the general case. A positive answer to this question yields ultimately that the two notions of $\mathcal F_1$-flat module and $\mathcal F_1^{\fp}$-flat module collapse (see Corollary \ref{2.8}). In connection with this problem, it is worth recalling that in Problem 22 of \cite{FS}, page 246, Fuchs and Salce asked for the structure of $\lim\limits_{\lr}\mathcal P_1$, the set of modules which are direct limit of modules in $\mathcal P_1$. It is known that if $R$ is a P\"rufer domain, then $\lim\limits_{\lr}\mathcal P_1=\Mod(R)(=\mathcal F_1)$. Subsequently, Hugel and Trlifaj generalized this result by proving that if $R$ is an integral domain, then $\lim\limits_{\lr}\mathcal P_1=\lim\limits_{\lr}\mathcal P_1^{\fp}=\mathcal F_1$ \cite[Theorem 2.3]{HT}. It appears then legitimate to ask whether $\lim\limits_{\lr}\mathcal P_1=\mathcal F_1$ in the general context of an arbitrary ring $R$. In this regard, in \cite{BH}, Bazzoni and Herbera gave a negative answer to this question by providing an example of a commutative Noetherian ring $R$ (namely $R:=\dfrac {k[x,y,z]}{(z^2,zx,zy)}$ with $x,y,z$ are indeterminates over a field $k$) such that $\lim\limits_{\lr}\mathcal P_1\neq\mathcal F_1$ \cite[Example 8.5]{BH}. We prove ahead, see Corollary \ref{3.3.1}, that this very ring $R$ do permit to answer negatively the above question, namely, that $\lim\limits_{\lr}\mathcal F_1^{\fp}\neq\mathcal F_1$. This shows in particular that the two notions of $\mathcal F_1$-flat module and $\mathcal F_1^{\fp}$-flat module are distinct. 

This brief discussion shows our interest in seeking connections between the two equalities $\lim\limits_{\lr}\mathcal F_1^{\fp}=\mathcal F_1$ and $\lim\limits_{\lr}\mathcal P_1=\mathcal F_1$. Particularly, we prove that $\lim\limits_{\lr}\mathcal P_1=\mathcal F_1$ implies $\lim\limits_{\lr}\mathcal F_1^{\fp}=\mathcal F_1$ and that when $R$ is $\mathcal F_1^{\fp}$-coherent (see Definition \ref{3.0}), then $\lim\limits_{\lr}\mathcal P_1=\mathcal F_1$ if and only if $\lim\limits_{\lr}\mathcal F_1^{\fp}=\mathcal F_1$.




\begin{prop}\label{2.5}
	Let $R$ be a ring. Then\\
	1) $\lim\limits_{\lr}\mathcal P_1^{\fp}=\lim\limits_{\lr}\mathcal P_1$.\\
	2) $\lim\limits_{\lr}\mathcal P_1\subseteq \lim\limits_{\lr}\mathcal F_1^{\fp}\subseteq \mathcal F_1$.
\end{prop}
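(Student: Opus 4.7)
The plan is to prove (1) first, from which (2) follows readily by a direct-limit stability property of $\mathcal F_1$. The inclusion $\lim\limits_{\lr}\mathcal P_1^{\fp}\subseteq\lim\limits_{\lr}\mathcal P_1$ in (1) is immediate from $\mathcal P_1^{\fp}\subseteq\mathcal P_1$. For the reverse inclusion, since a direct limit of a direct system of modules in $\lim\limits_{\lr}\mathcal P_1^{\fp}$ still belongs to $\lim\limits_{\lr}\mathcal P_1^{\fp}$, it suffices to verify that every $N\in\mathcal P_1$ lies in $\lim\limits_{\lr}\mathcal P_1^{\fp}$.

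To this end, starting from a short exact sequence $0\to P\to Q\to N\to 0$ with $P$ and $Q$ projective, I would apply Eilenberg's swindle: choose a free module $F$ such that $P\oplus F$ and $Q\oplus F$ are both free, and replace the sequence by $0\to P\oplus F\to Q\oplus F\to N\to 0$ (with the identity map on the $F$-summand). This reduces to the case where $P$ and $Q$ are free. Write $Q=\bigcup_{\alpha}Q_{\alpha}$ and $P=\bigcup_{\beta}P_{\beta}$ as directed unions of finitely generated free submodules, and form the directed set $\Lambda$ of pairs $(\alpha,\beta)$ for which the inclusion $P\hookrightarrow Q$ sends $P_{\beta}$ into $Q_{\alpha}$; such an $\alpha$ exists for every $\beta$ since $P_{\beta}$ is finitely generated. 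For $(\alpha,\beta)\in\Lambda$ set $N_{\alpha,\beta}:=Q_{\alpha}/P_{\beta}$. Each $N_{\alpha,\beta}$ is finitely presented, and the short exact sequence $0\to P_{\beta}\to Q_{\alpha}\to N_{\alpha,\beta}\to 0$, whose outer terms are finitely generated free, yields $\pd_R(N_{\alpha,\beta})\leq 1$, so $N_{\alpha,\beta}\in\mathcal P_1^{\fp}$. By exactness of filtered colimits, together with the cofinality of the projections $\Lambda\to\{\alpha\}$ and $\Lambda\to\{\beta\}$, one obtains $\lim\limits_{(\alpha,\beta)\in\Lambda}N_{\alpha,\beta}=Q/P=N$, establishing the claim.

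For (2), the inclusion $\lim\limits_{\lr}\mathcal F_1^{\fp}\subseteq\mathcal F_1$ follows because $\mathcal F_1$ is closed under direct limits: the condition $\fd_R(M)\leq 1$ is equivalent to $\Tor_2^R(-,M)=0$, and $\Tor$ commutes with direct limits. The inclusion $\lim\limits_{\lr}\mathcal P_1\subseteq\lim\limits_{\lr}\mathcal F_1^{\fp}$ then follows from (1) together with the trivial containment $\mathcal P_1^{\fp}\subseteq\mathcal F_1^{\fp}$. The main obstacle is the construction in (1): a finitely generated submodule of a projective module is not projective in general, so the naive attempt to write $N=Q/P$ as $\lim Q_{\alpha}/(P\cap Q_{\alpha})$ fails to produce quotients in $\mathcal P_1^{\fp}$; Eilenberg's swindle is the key trick, because it allows $P$ itself to be filtered by finitely generated \emph{free} submodules, which in turn forces each piece $N_{\alpha,\beta}$ to have projective dimension at most one.
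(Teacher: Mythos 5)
Your proof is correct and follows the same skeleton as the paper's: reduce everything to the single nontrivial containment $\mathcal P_1\subseteq\lim\limits_{\lr}\mathcal P_1^{\fp}$, upgrade it to $\lim\limits_{\lr}\mathcal P_1\subseteq\lim\limits_{\lr}\mathcal P_1^{\fp}$ by closure under direct limits, and deduce (2) from $\mathcal P_1^{\fp}\subseteq\mathcal F_1^{\fp}\subseteq\mathcal F_1$ together with closure of $\mathcal F_1$ under direct limits. The one place where you genuinely diverge is the key containment itself: the paper simply cites Bazzoni--Herbera for the fact that $\mathcal P_1\subseteq\lim\limits_{\lr}\mathcal P_1^{\infty}$, where $\mathcal P_1^{\infty}$ is the class of modules admitting resolutions by finitely generated projectives, and then observes $\mathcal P_1^{\infty}\subseteq\mathcal P_1^{\fp}$; you instead give a self-contained construction via Eilenberg's swindle followed by a filtration of a free presentation $0\to P\to Q\to N\to 0$ by finitely generated free pieces $0\to P_{\beta}\to Q_{\alpha}\to N_{\alpha,\beta}\to 0$. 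Your construction is sound (the directedness and cofinality checks all go through, and the swindle correctly forces each $N_{\alpha,\beta}$ into $\mathcal P_1^{\fp}$), and it has the merit of making explicit what the paper outsources to a reference. Two small remarks: your closing observation that the naive filtration $Q_{\alpha}/(P\cap Q_{\alpha})$ fails is exactly the right diagnosis and justifies the swindle; and the closure statement you invoke --- that a direct limit of modules in $\lim\limits_{\lr}\mathcal P_1^{\fp}$ again lies in $\lim\limits_{\lr}\mathcal P_1^{\fp}$ --- is not a formal triviality but Lenzing's result on direct limits of classes of finitely presented modules (the paper cites \cite[Lemma 1.2]{HT} for it), so you should either cite it or note that it depends on the objects of $\mathcal P_1^{\fp}$ being finitely presented.
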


\begin{proof} 1) Let $\mathcal P_1^\infty$ denote the class of all elements of $\mathcal P_1$ which admit projective resolutions consisting of finitely generated projective modules. It is well known that $\mathcal P_1\subseteq\lim\limits_{\lr}\mathcal P_1^\infty$ (see \cite[page 12]{BH}). Since  $\mathcal P_1^\infty\subseteq \mathcal P_1^{\fp}$, then $\lim\limits_{\lr}\mathcal P_1^\infty\subseteq \lim\limits_{\lr}\mathcal P_1^{\fp}$ and thus, we get $\mathcal P_1\subseteq \lim\limits_{\lr}\mathcal P_1^{\fp}$. By \cite[Lemma 1.2]{HT}, $\lim\limits_{\lr}\mathcal P_1^{\fp}$ is closed under direct limit. Hence $\lim\limits_{\lr}\mathcal P_1\subseteq \lim\limits_{\lr}\mathcal P_1^{\fp}$. Now, since $\lim\limits_{\lr}\mathcal P_1^{\fp}\subseteq \lim\limits_{\lr}\mathcal P_1$ (as $\mathcal P_1^{\fp}\subseteq\mathcal P_1$), it follows that $\lim\limits_{\lr}\mathcal P_1^{\fp}=\lim\limits_{\lr}\mathcal P_1$, as desired.  \\ 
	2) Consider the following inclusions $$\mathcal P_1^{\fp}\subseteq \mathcal F_1^{\fp}\subseteq \mathcal F_1.$$ Then $\lim\limits_{\lr}\mathcal P_1^{\fp}\subseteq \lim\limits_{\lr}\mathcal F_1^{\fp}\subseteq \lim\limits_{\lr}\mathcal F_1=\mathcal F_1$. By (1), $\lim\limits_{\lr}\mathcal P_1^{\fp}=\lim\limits_{\lr}\mathcal P_1$. It follows that $\lim\limits_{\lr}\mathcal P_1\subseteq\lim\limits_{\lr}\mathcal F_1^{\fp}\subseteq \mathcal F_1$, as contended.\\
\end{proof}

\begin{cor}\label{2.6}
	Let $R$ be a ring. If $\lim\limits_{\lr}\mathcal P_1=\mathcal F_1$, then $\lim\limits_{\lr}\mathcal F_1^{\fp}=\mathcal F_1$.
\end{cor}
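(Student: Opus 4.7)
The plan is to deduce this as an immediate squeeze from Proposition \ref{2.5}(2). That proposition already gives the chain of inclusions
$$\lim\limits_{\lr}\mathcal P_1\subseteq \lim\limits_{\lr}\mathcal F_1^{\fp}\subseteq \mathcal F_1,$$
so all one has to do is substitute the hypothesis $\lim\limits_{\lr}\mathcal P_1=\mathcal F_1$ into the left-most term and observe that the outer terms of the chain coincide; the middle term is then forced to equal them both.

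Concretely, I would write: assume $\lim\limits_{\lr}\mathcal P_1=\mathcal F_1$. By Proposition \ref{2.5}(2), $\mathcal F_1=\lim\limits_{\lr}\mathcal P_1\subseteq \lim\limits_{\lr}\mathcal F_1^{\fp}\subseteq \mathcal F_1$, whence equality holds throughout and in particular $\lim\limits_{\lr}\mathcal F_1^{\fp}=\mathcal F_1$.

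There is no real obstacle here: all the substantive content has been absorbed into Proposition \ref{2.5}, whose proof used the nontrivial fact (from Bazzoni--Herbera) that every module in $\mathcal P_1$ is a direct limit of objects in $\mathcal P_1^\infty$, and the fact that $\lim\limits_{\lr}\mathcal P_1^{\fp}$ is closed under direct limits (Hugel--Trlifaj). Once those tools are in place, the corollary is purely formal and follows in one line.
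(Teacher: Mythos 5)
Your proof is correct and matches the paper's intended argument exactly: the corollary is stated without proof precisely because it is the immediate squeeze from the chain of inclusions in Proposition \ref{2.5}(2). Nothing further is needed.
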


\begin{prop}\label{2.7} Let $R$ be a ring. Then\\
	1)	The pair $(\widehat{\mathcal P_1},\mathcal P_1\mathcal F(R))$ is a $\Tor$-torsion theory.\\
	2) $(\widehat {\mathcal P_1^{\fp}},\mathcal P_1^{\fp}\mathcal F(R))$ is a $\Tor$-torsion theory with $$\lim\limits_{\lr}\mathcal P_1\subseteq\lim\limits_{\lr}\mathcal F_1^{\fp}\subseteq \widehat {\mathcal F_1^{\fp}}\subseteq \mathcal F_1\text { and }\lim\limits_{\lr}\mathcal P_1=\widehat {\mathcal P_1^{\fp}}=\widehat{\mathcal P_1}\subseteq \mathcal F_1.$$
	3) $(\widehat{\mathcal P_1},\mathcal P_1\mathcal F(R))=(\widehat {\mathcal P_1^{\fp}},\mathcal P_1^{\fp}\mathcal F(R))$.
\end{prop}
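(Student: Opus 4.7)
The plan is to derive parts (1), (2), and (3) as largely formal consequences of Lemma \ref{2.1} combined with Propositions \ref{2.4} and \ref{2.5}; the single genuinely non-formal input is the identity $\lim\limits_{\lr}\mathcal P_1 = \widehat{\mathcal P_1}$, which I would import from the Bazzoni--Herbera machinery alluded to in the introduction.

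For part (1), I would invoke the generality preceding Lemma \ref{2.1}: for any class $\mathcal C$ of right $R$-modules, the pair $({}^{\top}(\mathcal C^{\top}), \mathcal C^{\top}) = (\widehat{\mathcal C}, \mathcal C^{\top})$ is automatically a Tor-torsion theory. Taking $\mathcal C = \mathcal P_1$ and using $\mathcal P_1\mathcal F(R) = \mathcal P_1^{\top}$ yields (1) at once; the same recipe with $\mathcal C = \mathcal P_1^{\fp}$ gives the Tor-torsion theory assertion of (2).

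Next I address the two chains in (2). In the first chain, $\lim\limits_{\lr}\mathcal P_1 \subseteq \lim\limits_{\lr}\mathcal F_1^{\fp}$ is Proposition \ref{2.5}(2); the middle inclusion $\lim\limits_{\lr}\mathcal F_1^{\fp} \subseteq \widehat{\mathcal F_1^{\fp}}$ is the general fact $\lim\limits_{\lr}\mathcal C \subseteq \widehat{\mathcal C}$ from the preliminaries; and $\widehat{\mathcal F_1^{\fp}} \subseteq \mathcal F_1$ follows from the monotonicity of the $\widehat{\,\cdot\,}$ operator (if $\mathcal C \subseteq \mathcal D$ then $\mathcal D^{\top} \subseteq \mathcal C^{\top}$, hence $\widehat{\mathcal C} = {}^{\top}(\mathcal C^{\top}) \subseteq {}^{\top}(\mathcal D^{\top}) = \widehat{\mathcal D}$) applied to $\mathcal F_1^{\fp} \subseteq \mathcal F_1$, together with $\widehat{\mathcal F_1} = \mathcal F_1$ from Proposition \ref{2.4}(1). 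In the second chain, $\widehat{\mathcal P_1^{\fp}} = \widehat{\mathcal P_1}$ drops out by feeding $\lim\limits_{\lr}\mathcal P_1^{\fp} = \lim\limits_{\lr}\mathcal P_1$ (Proposition \ref{2.5}(1)) into Lemma \ref{2.1}(3), and $\widehat{\mathcal P_1} \subseteq \mathcal F_1$ by the same monotonicity argument.

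The identity $\lim\limits_{\lr}\mathcal P_1 = \widehat{\mathcal P_1}$ is the main obstacle and the only step that is not purely formal. The inclusion $\lim\limits_{\lr}\mathcal P_1 \subseteq \widehat{\mathcal P_1}$ is the general fact. For the reverse, I would import the result of Bazzoni and Herbera \cite[Theorem 6.7 and Corollary 6.8]{BH}, applied to the finitely presented class $\mathcal P_1^{\fp}$: it identifies $\widehat{\mathcal P_1^{\fp}}$ with $\lim\limits_{\lr}\mathcal P_1^{\fp}$, and combining with $\widehat{\mathcal P_1^{\fp}} = \widehat{\mathcal P_1}$ and $\lim\limits_{\lr}\mathcal P_1^{\fp} = \lim\limits_{\lr}\mathcal P_1$ closes the loop. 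Finally, part (3) follows immediately: with $\widehat{\mathcal P_1} = \widehat{\mathcal P_1^{\fp}}$ in hand, Lemma \ref{2.1}(1) gives $\mathcal P_1\mathcal F(R) = \widehat{\mathcal P_1}^{\top} = \widehat{\mathcal P_1^{\fp}}^{\top} = \mathcal P_1^{\fp}\mathcal F(R)$, so the two Tor-torsion theories coincide.
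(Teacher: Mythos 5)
Your overall architecture is the same as the paper's: part (1) and the torsion-theory claim in (2) are the general fact that $(\widehat{\mathcal C},\mathcal C^{\top})$ is a Tor-torsion theory; the chains in (2) reduce to Proposition \ref{2.5} plus formal properties of $\widehat{\,\cdot\,}$ (your derivation of $\widehat{\mathcal F_1^{\fp}}\subseteq\mathcal F_1$ via monotonicity and $\widehat{\mathcal F_1}=\mathcal F_1$ is actually more explicit than the paper's); $\widehat{\mathcal P_1^{\fp}}=\widehat{\mathcal P_1}$ follows from Lemma \ref{2.1} (you use part (3) where the paper uses the sandwich of part (2) --- immaterial); and (3) then drops out of Lemma \ref{2.1}(1). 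You have also correctly isolated the one non-formal input, namely $\lim\limits_{\lr}\mathcal P_1^{\fp}=\widehat{\mathcal P_1^{\fp}}$.

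The gap is in how you source that input. You import it from \cite[Theorem 6.7 and Corollary 6.8]{BH}, but as those results are used in this paper (see the proof of Proposition \ref{2.12.1}), they assert that $\lim\limits_{\lr}\mathcal P_1=\mathcal F_1$ for a \emph{commutative} ring whose classical quotient ring has finitistic flat dimension zero. That is a conditional statement about when the direct limit closure fills up all of $\mathcal F_1$; it is not the unconditional identity $\widehat{\mathcal P_1^{\fp}}=\lim\limits_{\lr}\mathcal P_1^{\fp}$ over an arbitrary ring, and indeed the paper later exhibits rings where $\lim\limits_{\lr}\mathcal P_1\neq\mathcal F_1$, so the two statements cannot be conflated. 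If one literally applied your cited result, the argument would only cover a restricted class of commutative rings, whereas Proposition \ref{2.7} is claimed for every ring. The correct external reference, and the one the paper uses, is \cite[Theorem 2.3]{HT}, which applies to classes of finitely presented modules such as $\mathcal P_1^{\fp}$ over an arbitrary ring and yields exactly $\lim\limits_{\lr}\mathcal P_1^{\fp}=\widehat{\mathcal P_1^{\fp}}$. With that substitution your proof is complete and coincides with the paper's.
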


\begin{proof}
	1) It is direct.\\
	2) The inequalities $\lim\limits_{\lr}\mathcal P_1\subseteq\lim\limits_{\lr}\mathcal F_1^{\fp}\subseteq \widehat {\mathcal F_1^{\fp}}\subseteq \mathcal F_1$ holds by Proposition \ref{2.5} and note that $$\mathcal P_1^{\fp}\subseteq \mathcal P_1\subseteq \lim\limits_{\lr}\mathcal P_1=\lim\limits_{\lr}\mathcal P_1^{\fp}\subseteq \widehat {\mathcal P_1^{\fp}}\subseteq \widehat {\mathcal P_1}\subseteq \mathcal F_1.$$ Then, by Lemma \ref{2.1}, $\widehat {\mathcal P_1^{\fp}}=\widehat {\mathcal P_1}$. Moreover, by \cite[Theorem 2.3]{HT}, $\lim\limits_{\lr}\mathcal P_1^{\fp}=\widehat {\mathcal P_1^{\fp}}$. Then we are done.\\
	3) It is direct using (2).
\end{proof}

We next list immediate consequences of Proposition \ref{2.7}.

\begin{cor}\label{2.8}
	Let $R$ be a ring. If $\lim\limits_{\lr}\mathcal F_1^{\fp}=\mathcal F_1$, then $\mathcal F_1^{\fp}\mathcal F(R)=\mathcal F_1\mathcal F(R)$.\end{cor}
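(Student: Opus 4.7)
The plan is to observe that only one inclusion needs proof, and that this inclusion follows immediately from Lemma \ref{2.1}. By Proposition \ref{2.2} (1), we already have $\mathcal F_1\mathcal F(R)\subseteq\mathcal F_1^{\fp}\mathcal F(R)$ in general, so the only thing to check under the hypothesis $\lim\limits_{\lr}\mathcal F_1^{\fp}=\mathcal F_1$ is the reverse inclusion $\mathcal F_1^{\fp}\mathcal F(R)\subseteq\mathcal F_1\mathcal F(R)$.

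To establish this, I would apply Lemma \ref{2.1} (1) to the class $\mathcal C=\mathcal F_1^{\fp}$, which gives
$$
(\lim\limits_{\lr}\mathcal F_1^{\fp})^{\top}=(\mathcal F_1^{\fp})^{\top}=\mathcal F_1^{\fp}\mathcal F(R).
$$
Substituting the hypothesis $\lim\limits_{\lr}\mathcal F_1^{\fp}=\mathcal F_1$ in the left hand side yields $\mathcal F_1^{\top}=\mathcal F_1^{\fp}\mathcal F(R)$, and since by definition $\mathcal F_1^{\top}=\mathcal F_1\mathcal F(R)$, we conclude $\mathcal F_1\mathcal F(R)=\mathcal F_1^{\fp}\mathcal F(R)$, as required.

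Alternatively, one could argue via Proposition \ref{2.4}. That proposition exhibits two $\Tor$-torsion theories: $(\mathcal F_1,\mathcal F_1\mathcal F(R))$ and $(\widehat{\mathcal F_1^{\fp}},\mathcal F_1^{\fp}\mathcal F(R))$, together with the chain $\lim\limits_{\lr}\mathcal F_1^{\fp}\subseteq\widehat{\mathcal F_1^{\fp}}\subseteq\mathcal F_1$. Under the hypothesis $\lim\limits_{\lr}\mathcal F_1^{\fp}=\mathcal F_1$, this chain collapses and forces $\widehat{\mathcal F_1^{\fp}}=\mathcal F_1$; the two torsion theories therefore coincide, and in particular their right halves agree, giving the desired equality.

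There is no real obstacle here: once Lemma \ref{2.1} (1) is in place, the corollary is a one-line substitution, which is precisely why it is listed as an immediate consequence.
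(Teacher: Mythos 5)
Your proof is correct and follows essentially the same route as the paper: the paper deduces $\widehat{\mathcal F_1^{\fp}}=\mathcal F_1$ from the chain in Proposition \ref{2.7}(2) (which is exactly your ``alternative'' argument via the chain $\lim\limits_{\lr}\mathcal F_1^{\fp}\subseteq\widehat{\mathcal F_1^{\fp}}\subseteq\mathcal F_1$), and your primary argument via Lemma \ref{2.1}(1) is just a trivially equivalent way of reading off the same equality of the Tor-orthogonal classes.
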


\begin{proof} Assume that $\lim\limits_{\lr}\mathcal F_1^{\fp}=\mathcal F_1$. Then, by Proposition \ref{2.7} (2), $\widehat {\mathcal F_1^{\fp}}=\mathcal F_1$. Hence $\mathcal F_1^{\fp}\mathcal F(R)=\mathcal F_1\mathcal F(R)$.

\end{proof}

\begin{cor} \label{2.9} Let $R$ be a ring. If $\lim\limits_{\lr}\mathcal P_1=\mathcal F_1$, then
 $\mathcal P_1\mathcal F(R)=\mathcal F_1^{\fp}\mathcal F(R)=\mathcal F_1\mathcal F(R).$
	\end{cor}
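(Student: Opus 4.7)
The plan is to string together the previous corollaries of this section, as the heavy lifting has already been done. The hypothesis $\lim\limits_{\lr}\mathcal P_1=\mathcal F_1$ is quite strong and will immediately feed into two separate chains.

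First, I would use Corollary \ref{2.6}: the assumption $\lim\limits_{\lr}\mathcal P_1=\mathcal F_1$ directly yields $\lim\limits_{\lr}\mathcal F_1^{\fp}=\mathcal F_1$. Feeding this into Corollary \ref{2.8} gives the equality $\mathcal F_1^{\fp}\mathcal F(R)=\mathcal F_1\mathcal F(R)$, which handles one of the two required identifications.

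For the remaining equality $\mathcal P_1\mathcal F(R)=\mathcal F_1\mathcal F(R)$, I would invoke Lemma \ref{2.1}(1) applied to $\mathcal C=\mathcal P_1$: this gives $(\lim\limits_{\lr}\mathcal P_1)^{\top}=\mathcal P_1^{\top}$. Substituting the hypothesis $\lim\limits_{\lr}\mathcal P_1=\mathcal F_1$ on the left side yields $\mathcal F_1^{\top}=\mathcal P_1^{\top}$, which unwinding notation is exactly $\mathcal F_1\mathcal F(R)=\mathcal P_1\mathcal F(R)$. Alternatively, one could use Proposition \ref{2.7}(2), which asserts $\lim\limits_{\lr}\mathcal P_1=\widehat{\mathcal P_1}$, combined with the observation that the Tor-torsion theory $(\widehat{\mathcal P_1},\mathcal P_1\mathcal F(R))$ is determined by its left class; the hypothesis collapses $\widehat{\mathcal P_1}$ to $\mathcal F_1=\widehat{\mathcal F_1}$, so by Lemma \ref{2.1}(3) the right classes agree.

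There is no genuine obstacle here: the statement is essentially a combinatorial bookkeeping exercise over Corollaries \ref{2.6} and \ref{2.8} together with Lemma \ref{2.1}(1). The only point worth being careful about is to apply the lemma to $\mathcal P_1$ rather than $\mathcal P_1^{\fp}$, so that the left-hand side of $(\lim\limits_{\lr}\mathcal C)^{\top}=\mathcal C^{\top}$ matches the class appearing in the hypothesis exactly; after that the two equalities fall out by direct substitution, giving the announced chain $\mathcal P_1\mathcal F(R)=\mathcal F_1^{\fp}\mathcal F(R)=\mathcal F_1\mathcal F(R)$.
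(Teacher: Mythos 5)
Your proof is correct and follows essentially the same route as the paper: the equality $\mathcal F_1^{\fp}\mathcal F(R)=\mathcal F_1\mathcal F(R)$ via Corollaries \ref{2.6} and \ref{2.8}, and the equality $\mathcal P_1\mathcal F(R)=\mathcal F_1\mathcal F(R)$ via Lemma \ref{2.1} applied to $\lim\limits_{\lr}\mathcal P_1=\mathcal F_1=\lim\limits_{\lr}\mathcal F_1$ (the paper cites part (3) of that lemma, which is just the packaged form of the part (1) computation you use). No gaps.
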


\begin{proof}
	Note that $\mathcal F_1\mathcal F(R)\subseteq\mathcal F_1^{\fp}\mathcal F(R)\subseteq \mathcal P_1^{\fp}\mathcal F(R)=\mathcal P_1\mathcal F(R)$ as $\mathcal P_1^{\fp}\subseteq\mathcal F_1^{\fp}\subseteq \mathcal F_1$. Assume that $\lim\limits_{\lr}\mathcal P_1=\mathcal F_1$. Then, by Lemma \ref{2.1}(3), we get $\mathcal F_1\mathcal F(R)=\mathcal P_1\mathcal F(R)$. Also, by Corollary \ref{2.6}, $\lim\limits_{\lr}\mathcal F_1^{\fp}=\mathcal F_1$. Thus, by Corollary \ref{2.8}, $\mathcal F_1^{\fp}\mathcal F(R)=\mathcal F_1\mathcal F(R)$. Then the desired result follows.
\end{proof}

\begin{cor}\label{2.10}
	Let $R$ be a ring. Then the following assertions are equivalent:
	
	1) $\lim\limits_{\lr}\mathcal P_1=\lim\limits_{\lr}\mathcal F_1^{\fp}$;
	
	2) $\mathcal P_1\mathcal F(R)=\mathcal F_1^{\fp}\mathcal F(R)$;
	
	3) $\widehat {\mathcal P_1}=\widehat {\mathcal F_1^{\fp}}$.
\end{cor}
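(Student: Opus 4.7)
The plan is to prove the cycle (1) $\Rightarrow$ (2) $\Rightarrow$ (3) $\Rightarrow$ (1), using only Lemma \ref{2.1} and Proposition \ref{2.7}. The proof should be short and mostly formal: the real work has already been invested in Proposition \ref{2.7}, whose key content is the identification $\lim\limits_{\lr}\mathcal P_1 = \widehat{\mathcal P_1^{\fp}} = \widehat{\mathcal P_1}$ (ultimately relying on \cite[Theorem 2.3]{HT}). The three assertions are essentially the same equality viewed through different closure operators on the Tor-orthogonal correspondence $\mathcal C \mapsto (\widehat{\mathcal C}, \mathcal C^{\top})$.

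For (1) $\Rightarrow$ (2), I would simply invoke Lemma \ref{2.1}(3): equality of the direct-limit closures forces equality of the Tor-orthogonals, and by definition $\mathcal P_1^{\top} = \mathcal P_1\mathcal F(R)$ and $(\mathcal F_1^{\fp})^{\top} = \mathcal F_1^{\fp}\mathcal F(R)$. For (2) $\Rightarrow$ (3), I would take the left Tor-orthogonal of both sides: from $\mathcal P_1^{\top} = (\mathcal F_1^{\fp})^{\top}$ one gets $\widehat{\mathcal P_1} = {}^{\top}(\mathcal P_1^{\top}) = {}^{\top}((\mathcal F_1^{\fp})^{\top}) = \widehat{\mathcal F_1^{\fp}}$ directly from the definition of $\widehat{(-)}$.

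For (3) $\Rightarrow$ (1), this is the step where Proposition \ref{2.7} does the heavy lifting. That proposition supplies both the chain of inclusions
$$\lim\limits_{\lr}\mathcal P_1 \subseteq \lim\limits_{\lr}\mathcal F_1^{\fp} \subseteq \widehat{\mathcal F_1^{\fp}}$$
and the equality $\lim\limits_{\lr}\mathcal P_1 = \widehat{\mathcal P_1}$. Substituting $\widehat{\mathcal P_1} = \widehat{\mathcal F_1^{\fp}}$ from (3) into the latter yields $\lim\limits_{\lr}\mathcal P_1 = \widehat{\mathcal F_1^{\fp}}$, which together with the chain forces
$$\widehat{\mathcal F_1^{\fp}} = \lim\limits_{\lr}\mathcal P_1 \subseteq \lim\limits_{\lr}\mathcal F_1^{\fp} \subseteq \widehat{\mathcal F_1^{\fp}},$$
so every inclusion is an equality, giving (1).

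There is no genuine obstacle here; the only point requiring care is to remember that the Tor-orthogonal sends right-module classes to left-module classes (and vice versa), so that $\widehat{\mathcal P_1}$ and $\widehat{\mathcal F_1^{\fp}}$ live among right $R$-modules while $\mathcal P_1\mathcal F(R)$ and $\mathcal F_1^{\fp}\mathcal F(R)$ live among left $R$-modules, which is exactly the setup in which Lemma \ref{2.1} was stated.
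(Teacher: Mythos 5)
Your proof is correct and follows essentially the same route as the paper: the paper also disposes of the equivalence of (2) and (3) as formal consequences of the Tor-orthogonal correspondence, gets (1) $\Rightarrow$ (3) from Lemma \ref{2.1}(3), and proves (3) $\Rightarrow$ (1) exactly as you do, by combining $\widehat{\mathcal P_1}=\widehat{\mathcal P_1^{\fp}}=\lim\limits_{\lr}\mathcal P_1^{\fp}=\lim\limits_{\lr}\mathcal P_1$ from Proposition \ref{2.7} with the chain $\lim\limits_{\lr}\mathcal P_1\subseteq\lim\limits_{\lr}\mathcal F_1^{\fp}\subseteq\widehat{\mathcal F_1^{\fp}}$. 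The only cosmetic difference is that you arrange the easy implications as a cycle (1) $\Rightarrow$ (2) $\Rightarrow$ (3) $\Rightarrow$ (1) rather than proving (2) $\Leftrightarrow$ (3) separately.
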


\begin{proof} It is clear that (2) $\Leftrightarrow$ (3).\\
	1) $\Rightarrow$ 3) Apply Lemma \ref{2.1}(3).\\
	3) $\Rightarrow$ 1) By Proposition \ref{2.7}, $\widehat {\mathcal P_1}=\widehat {\mathcal P_1^{\fp}}$. Then $\widehat {\mathcal P_1^{\fp}}=\widehat {\mathcal F_1^{\fp}}$. Also, By Proposition \ref{2.7}, $\lim\limits_{\lr}\mathcal P_1^{\fp}=\widehat {\mathcal P_1^{\fp}}$ and $\lim\limits_{\lr}\mathcal P_1^{\fp}\subseteq\lim\limits_{\lr}\mathcal F_1^{\fp}\subseteq \widehat {\mathcal F_1^{\fp}}$. It follows that $\lim\limits_{\lr}\mathcal P_1=\lim\limits_{\lr}\mathcal F_1^{\fp}$, as desired. 
	
	\end{proof}

The following results provide a series of rings which satisfy the equality $\lim\limits_{\lr}\mathcal P_1=\mathcal F_1$.

\begin{prop}\label{2.11} Let $R$ be a ring. If $R$ is a right semi-hereditary ring, then  $$\lim\limits_{\lr}\mathcal P_1=\mathcal F_1=\Mod(R).$$
\end{prop}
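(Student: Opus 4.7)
The plan is to establish the chain of inclusions
\[
\Mod(R)\subseteq \lim\limits_{\lr}\mathcal P_1^{\fp}\subseteq \lim\limits_{\lr}\mathcal P_1\subseteq \mathcal F_1\subseteq \Mod(R),
\]
forcing equality throughout. The rightmost inclusion $\mathcal F_1\subseteq\Mod(R)$ is trivial, and $\lim\limits_{\lr}\mathcal P_1\subseteq\mathcal F_1$ follows because projective modules are flat and $\mathcal F_1$ is closed under direct limits (as $\mathcal F_1=\lim\limits_{\lr}\mathcal F_1$, which is already used in Proposition~\ref{2.5}). The middle inclusion $\lim\limits_{\lr}\mathcal P_1^{\fp}\subseteq\lim\limits_{\lr}\mathcal P_1$ comes from $\mathcal P_1^{\fp}\subseteq\mathcal P_1$. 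So everything reduces to showing the leftmost inclusion $\Mod(R)\subseteq\lim\limits_{\lr}\mathcal P_1^{\fp}$.

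To obtain this, I would first recall the standard fact that every right $R$-module is a direct limit of finitely presented right $R$-modules, so it suffices to show $\mathcal F\mathcal P(R)\subseteq\mathcal P_1^{\fp}$, where $\mathcal F\mathcal P(R)$ denotes the class of finitely presented right $R$-modules. Let $M$ be finitely presented, and pick a short exact sequence
\[
0\lr K\lr R^n\lr M\lr 0
\]
with $K$ finitely generated. Since $R$ is right semi-hereditary, every finitely generated submodule of a free right $R$-module is projective, so $K$ is projective. Hence $\pd_R(M)\leq 1$ and $M\in\mathcal P_1^{\fp}$, giving $\Mod(R)=\lim\limits_{\lr}\mathcal F\mathcal P(R)\subseteq\lim\limits_{\lr}\mathcal P_1^{\fp}$.

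Invoking Proposition~\ref{2.5}(1) to replace $\lim\limits_{\lr}\mathcal P_1^{\fp}$ by $\lim\limits_{\lr}\mathcal P_1$ then closes the cycle and yields
\[
\lim\limits_{\lr}\mathcal P_1=\mathcal F_1=\Mod(R).
\]
There is no genuine obstacle here: the only substantive point is the classical characterization of right semi-hereditary rings through projectivity of finitely generated submodules of free modules, which immediately forces finitely presented modules into $\mathcal P_1^{\fp}$.
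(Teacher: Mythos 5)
Your proof is correct and follows essentially the same route as the paper: both arguments rest on the classical fact that over a right semi-hereditary ring any finitely generated submodule of a free (projective) module is projective, so every finitely presented module lies in $\mathcal P_1$, and then write an arbitrary module as a direct limit of finitely presented ones. Your version merely makes the chain of inclusions (and the detour through $\mathcal P_1^{\fp}$ and Proposition~\ref{2.5}) explicit, which the paper leaves implicit.
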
	

\begin{proof}
	Assume that $R$ is a right semi-hereditary. Let $M\in\Mod(R)$. Note that $M$ is a direct limit of finitely presented modules $(M_i)_i$. Also, note that, as $R$ is semi-hereditary, any finitely presented module is an element of $\mathcal P_1$ since any finitely generated submodule of a projective module is projective. Hence each $M_i\in \mathcal P_1$ and thus $M\in\lim\limits_{\lr}\mathcal P_1$. It follows that $\lim\limits_{\lr}\mathcal P_1=\Mod(R)=\mathcal F_1$. This completes the proof.
\end{proof}

Let $R$ be a ring. Recall that the right finitistic projective dimension of $R$ is the positive integer $\FPD(R)=\max\{\pd_R(M):M\in\Mod(R)$ and $\pd_R(M)<+\infty\}$ and the right finitistic flat dimension of $R$ is the positive integer $\FFD(R)=\max\{\fd_R(M):M\in\Mod(R)$ and $\fd_R(M)<+\infty\}$.


	
	
	
	

\begin{prop}\label{2.12}
	Let $R$ be a ring such that $\FFD(R)=0$. Then $$\lim\limits_{\lr}\mathcal P_1=\mathcal F_1=\mathcal F(R).$$
\end{prop}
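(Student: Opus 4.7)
The plan is to prove the two equalities separately, starting from the easier one $\mathcal F_1=\mathcal F(R)$ and then establishing $\lim\limits_{\lr}\mathcal P_1=\mathcal F_1$ via Lazard's theorem.

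First I would check that $\mathcal F_1=\mathcal F(R)$. The inclusion $\mathcal F(R)\subseteq \mathcal F_1$ is trivial since flat modules have flat dimension $0\leq 1$. For the reverse inclusion, take any $M\in\mathcal F_1$, so $\fd_R(M)\leq 1<+\infty$. The hypothesis $\FFD(R)=0$ then forces $\fd_R(M)=0$, i.e.\ $M\in\mathcal F(R)$.

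Next I would establish $\lim\limits_{\lr}\mathcal P_1=\mathcal F_1$. From Proposition \ref{2.5}(2) we already have the inclusion $\lim\limits_{\lr}\mathcal P_1\subseteq \mathcal F_1$, so only the reverse inclusion needs justification. Let $M\in\mathcal F_1=\mathcal F(R)$. By Lazard's theorem, every flat $R$-module is a direct limit of finitely generated free modules. Finitely generated free modules are projective, and projective modules certainly lie in $\mathcal P_1$ (they have projective dimension $0\leq 1$). Hence $M\in\lim\limits_{\lr}\mathcal P_1$, giving $\mathcal F_1\subseteq\lim\limits_{\lr}\mathcal P_1$.

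Combining the two equalities yields $\lim\limits_{\lr}\mathcal P_1=\mathcal F_1=\mathcal F(R)$, as desired. There is no real obstacle here: the argument is essentially a direct application of the definition of $\FFD$ together with Lazard's characterization of flat modules as directed colimits of finitely generated free modules.
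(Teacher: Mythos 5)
Your proof is correct and uses exactly the same two ingredients as the paper: the definition of $\FFD(R)=0$ to get $\mathcal F_1=\mathcal F(R)$, and Lazard's theorem to place flat modules inside $\lim\limits_{\lr}\mathcal P_1$. The paper merely organizes these as a single sandwich $\mathcal F(R)\subseteq\lim\limits_{\lr}\mathcal P_1\subseteq\mathcal F_1=\mathcal F(R)$ rather than as two separate equalities, so the arguments are essentially identical.
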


\begin{proof} First, note that $\mathcal F(R)\subseteq\lim\limits_{\lr}\mathcal P_1\subseteq \mathcal F_1$ as any flat module is a direct limit of a direct system of finitely generated free modules. As $\FFD(R)=0$, $\mathcal F(R)=\mathcal F_1$ and thus $\lim\limits_{\lr}\mathcal P_1=\mathcal F_1=\mathcal F(R)$, as desired.
	
\end{proof}

The next result sharpens Proposition \ref{2.12} in the case where $R$ is a commutative ring.

\begin{prop}\label {2.12.1} Let $R$ be a commutative ring and $Q$ its classical ring of quotients.\\
1)	If $\FFD(Q)=0$, then $\lim\limits_{\lr}\mathcal P_1=\mathcal F_1$.\\
2) Moreover, if $Q$ is Noetherian, then the following assertions are equivalent: 

i) $\lim\limits_{\lr}\mathcal P_1=\mathcal F_1$;

ii) $\FFD(Q)=0$;

iii) $\depth(Q_p)=0$ for each prime ideal $p$ of $Q$.
\end{prop}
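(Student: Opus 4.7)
The plan is to handle Part~(1) substantively and deduce Part~(2) from it together with classical commutative algebra.

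For Part~(1), the inclusion $\lim\limits_{\lr}\mathcal P_1\subseteq\mathcal F_1$ is Proposition~\ref{2.5}(2), so the content lies in establishing $\mathcal F_1\subseteq\lim\limits_{\lr}\mathcal P_1$. I would begin by exploiting the $R$-flatness of $Q=S^{-1}R$: for any $M\in\mathcal F_1(R)$ we have $\fd_Q(M\otimes_R Q)\leq\fd_R(M)\leq 1$, and the hypothesis $\FFD(Q)=0$ forces $M\otimes_R Q$ to be $Q$-flat; composing with the $R$-flatness of $Q$, $M\otimes_R Q$ is then $R$-flat and therefore lies in $\lim\limits_{\lr}\mathcal P_1(R)$. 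To transfer this conclusion back to $M$, I would tensor the exact sequence $0\to R\to Q\to Q/R\to 0$ with $M$; the flatness of $Q$ together with $\fd_R(M)\leq 1$ yields
\[0\to\Tor_1^R(M,Q/R)\to M\to M\otimes_R Q\to M\otimes_R(Q/R)\to 0.\]
Writing $Q/R$ as the direct limit over $s\in S$ of the modules $R/sR$, each of which lies in $\mathcal P_1(R)$ via the resolution $0\to R\xrightarrow{s}R\to R/sR\to 0$, one has $Q/R\in\lim\limits_{\lr}\mathcal P_1(R)$, and therefore so does $M\otimes_R(Q/R)$. I would then invoke the Bazzoni--Herbera machinery of \cite{BH} to propagate membership in $\lim\limits_{\lr}\mathcal P_1(R)$ through this four-term exact sequence, concluding that $M\in\lim\limits_{\lr}\mathcal P_1(R)$.

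For Part~(2), the implication (ii)$\Rightarrow$(i) is Part~(1), and (ii)$\Leftrightarrow$(iii) follows from a localization formula for the big finitistic flat dimension: for commutative Noetherian $Q$ one has $\FFD(Q)=\sup\{\depth(Q_p):p\in\Spec(Q)\}$ (Auslander--Buchsbaum applied at each local ring $Q_p$), so $\FFD(Q)=0$ is equivalent to $\depth(Q_p)=0$ at every prime. For (i)$\Rightarrow$(ii) I would argue contrapositively: assuming $\FFD(Q)\geq 1$, pick a $Q$-module $N$ with $\fd_Q(N)=1$; the $R$-flatness of $Q$ gives $\fd_R(N)\leq 1$, so $N\in\mathcal F_1(R)$. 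Under (i), $N$ would be a direct limit of modules from $\mathcal P_1^{\fp}(R)$ by Proposition~\ref{2.5}(1); since $-\otimes_R Q$ commutes with direct limits and sends $\mathcal P_1^{\fp}(R)$ into $\mathcal P_1^{\fp}(Q)$, the identification $N=N\otimes_R Q$ would place $N$ in $\lim\limits_{\lr}\mathcal P_1^{\fp}(Q)$, and the Noetherian structure of $Q$ would then force $N$ to be $Q$-flat --- contradicting $\fd_Q(N)=1$.

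The main obstacle is the closure argument in Part~(1): showing that if three of the four terms in the above exact sequence belong to $\lim\limits_{\lr}\mathcal P_1(R)$, then so does the fourth. Since direct-limit-closed Tor-orthogonal classes are not in general extension-closed, this requires either a direct construction of a compatible direct system of $\mathcal P_1(R)$-modules converging to $M$ or a deeper application of the cotorsion-pair machinery of \cite{BH}; the torsion piece $\Tor_1^R(M,Q/R)\subseteq M$ is the delicate component, since under $\FFD(Q)=0$ it need not vanish and carries no obvious a priori $\mathcal P_1(R)$-approximation structure.
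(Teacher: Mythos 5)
The paper does not attempt a self-contained argument here: Part (1) is quoted verbatim from \cite[Corollary 6.8]{BH}, and Part (2) combines \cite[Theorem 2.4]{AB} for ii) $\Leftrightarrow$ iii) with \cite[Corollary 6.8 and Lemma 8.3]{BH} for i) $\Leftrightarrow$ ii). Your reduction to the total quotient ring via $0\to R\to Q\to Q/R\to 0$ is indeed the right circle of ideas (it is how the Bazzoni--Herbera proof is organized), but as written your Part (1) does not close, and not only at the point you flag. First, the inference ``$Q/R\in\lim\limits_{\lr}\mathcal P_1$, and therefore so does $M\otimes_R(Q/R)$'' is a non sequitur: $\lim\limits_{\lr}\mathcal P_1$ is not closed under tensoring with an arbitrary $M\in\mathcal F_1$. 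Concretely, $M\otimes_R(Q/R)$ is the direct limit of the modules $M/sM$, and $M/sM$ need not lie in $\lim\limits_{\lr}\mathcal P_1$, nor even in $\mathcal F_1$: for $R=k[x,y]$, $M=(x,y)$ and $s=x$ one gets $\fd_R(M/xM)=2$. Second, as you yourself observe, $\lim\limits_{\lr}\mathcal P_1$ is not closed under extensions or kernels, so membership of three terms of your four-term sequence says nothing about $M$; this closure problem is exactly the nontrivial content of \cite[Theorem 6.7]{BH}, and ``invoke the Bazzoni--Herbera machinery'' is here a placeholder for the whole proof rather than an appeal to a usable black box. So Part (1) remains, in substance, a citation --- which is all the paper itself offers.

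In Part (2) the direction i) $\Rightarrow$ ii) contains a step that is false as stated: from $N\in\lim\limits_{\lr}\mathcal P_1(Q)$ and $Q$ Noetherian you conclude that $N$ is $Q$-flat, but Noetherianness alone never gives this ($Q=\mathbb Z$ and $N=\mathbb Z/2\mathbb Z$ is a counterexample: $\mathbb Z/2\mathbb Z\in\mathcal P_1(\mathbb Z)$ yet it is not flat). What must be used is that $Q$ is its own classical ring of quotients, and extracting $\FFD(Q)=0$ from i) under that hypothesis is precisely \cite[Lemma 8.3]{BH}, which your sketch would need to prove rather than assume. The equivalence ii) $\Leftrightarrow$ iii) via $\FFD(Q)=\sup\{\depth(Q_p):p\in\Spec(Q)\}$ agrees with the paper's citation of \cite[Theorem 2.4]{AB} and is fine, as are the preliminary reductions ($\fd_Q(M\otimes_RQ)\leq\fd_R(M)$, flat $Q$-modules are flat over $R$, $Q/R$ is a direct limit of the modules $R/sR\in\mathcal P_1$).
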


\begin{proof} 1) It holds by \cite[Corollary 6.8]{BH}.\\
	2) First, note that \cite[Theorem 2.4]{AB} ensures the equivalence ii) $\Leftrightarrow iii)$. Applying \cite[ Corollary 6.8 and Lemma 8.3]{BH}, we get the remaining equivalence i) $\Leftrightarrow$ ii), as desired.
\end{proof}

Recall that, in 1982, Matlis proved that a ring $R$ is coherent if and only if $\Hom_R(M,N)$ is flat for any injective $R$-modules $M$ and $N$ \cite{M1}. Also, in 1985, he introduced the notion of semi-coherent commutative ring. In effect,
he defined a ring $R$ to be semi-coherent if it is commutative and $\Hom_R(M,N)$ is a submodule of a flat $R$-module for any injective $R$-modules $M$ and $N$. Then, inspired by this definition and by von Neumann regularity, he defined a ring to be semi-regular if it is commutative and if any module can be embedded in a flat module. He then provided a connection between this notion and coherence; namely, a commutative ring $R$ is semi-regular if and only if $R$ is coherent and $R_M$ is semi-regular for every maximal ideal $M$ of $R$. He also proved that a ring $R$ is a Pr\"ufer domain if and only if $\dfrac RI$ is a semi-regular ring for each nonzero finitely generated ideal $I$ of $R$. A semi-regular ring is also termed an IF-ring (a ring in which any injective module is flat). The class of semi-regular rings then includes von Neumann regular rings, Quasi-Frobenius rings and quotients of Pr\"ufer domains by nonzero finitely generated ideals. 

\begin{cor}\label{2.13}
	Let $R$ be a semi-regular ring. Then $\FFD(R)=0$ and thus $$\lim\limits_{\lr}\mathcal P_1=\mathcal F_1=\mathcal F(R).$$
\end{cor}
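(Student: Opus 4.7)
The plan is to reduce the statement to Proposition \ref{2.12}, so the real content is the first assertion $\FFD(R)=0$; once that is in hand, Proposition \ref{2.12} yields $\lim\limits_{\lr}\mathcal P_1=\mathcal F_1=\mathcal F(R)$ directly. So I would fix a right $R$-module $M$ with $\fd_R(M)=n<\infty$ and aim to show $M$ is flat.

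My first move would be to pass to the character module. Since $R$ is commutative, the identity $\fd_R(M)=\id_R(M^+)$ applies, hence $\id_R(M^+)=n$ and $M^+$ admits a finite injective coresolution
\[
0\lr M^+\lr E^0\lr E^1\lr\cdots\lr E^n\lr 0.
\]
The semi-regular (equivalently IF) hypothesis on $R$ guarantees that each $E^i$ is flat. I would then run a downward induction on the cosyzygies $Z^i:=\ker(E^i\to E^{i+1})$: starting from $Z^n=E^n$, the short exact sequence $0\to Z^i\to E^i\to Z^{i+1}\to 0$ has $E^i$ flat and $Z^{i+1}$ flat (inductively), and the standard lemma that the kernel in a short exact sequence of flats (once the middle and right are flat) is again flat forces $Z^i$ to be flat. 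In particular $M^+=Z^0$ is flat.

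With $M^+$ flat, $M^{++}=(M^+)^+$ is the character module of a flat module and is therefore injective; invoking the IF property once more yields that $M^{++}$ is flat as well. Since the canonical evaluation $M\hookrightarrow M^{++}$ is a pure monomorphism and pure submodules of flat modules are flat, I conclude that $M$ is flat. This proves $\FFD(R)=0$, and Proposition \ref{2.12} closes the argument.

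The principal obstacle is the implication \emph{finite injective dimension $\Rightarrow$ flat} in an IF-ring, which I handle via the cosyzygy induction above; the subsequent character-module/pure-submodule chase is routine. A possible alternative would be to note that semi-regular (IF) rings are coherent, which would allow one to invoke results of Proposition \ref{2.12.1} on $R=Q$, but the direct argument via $M\hookrightarrow M^{++}$ avoids that detour and keeps the proof self-contained.
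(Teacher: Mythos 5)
Your argument is correct, but it takes a genuinely different route from the paper's. The paper works directly from the defining property of a semi-regular ring (every module embeds in a flat module): given $M\in\mathcal F_1$ and an arbitrary module $K$, it embeds $K$ in a flat module $F$ and dimension-shifts along $0\to K\to F\to F/K\to 0$ to get $\Tor_1^R(M,K)\cong\Tor_2^R(M,F/K)=0$, so $\mathcal F_1=\mathcal F(R)$ and hence $\FFD(R)=0$; this is a three-line computation with no duality. You instead use the IF characterization (injective $\Rightarrow$ flat, which the paper does record as equivalent to semi-regularity), the identity $\fd_R(M)=\id_R(M^+)$, a cosyzygy induction to make $M^+$ flat, and then the chain $M^+$ flat $\Rightarrow M^{++}$ injective $\Rightarrow M^{++}$ flat $\Rightarrow M$ flat via purity of $M\hookrightarrow M^{++}$. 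All of these steps are sound (in particular the kernel-of-flats lemma and the purity of the evaluation map are standard), and your route has the mild advantage of establishing $\FFD(R)=0$ directly for all finite flat dimensions at once, whereas the paper's proof strictly speaking only shows $\mathcal F_1=\mathcal F(R)$ and leaves the easy syzygy reduction from ``$\fd_R(M)<\infty$'' to ``$\fd_R(M)\le 1$'' implicit. The cost is that you route everything through character modules and the injective-flat dictionary, which is considerably heavier machinery than the paper needs; the embedding-into-a-flat argument is both shorter and closer to the definition actually being used.
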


\begin{proof}
	Let $M\in \mathcal F_1$ and let $K$ be an $R$-module. As $R$ is semi-regular, there exists an exact sequence $0\lr K\lr F\lr N=\dfrac FK\lr 0$ such that $F$ is a flat module. Then $\Tor_1^R(M,K)\cong\Tor_2^R(M,N)=0$ as $M\in\mathcal F_1$. Therefore $M\in\mathcal F(R)$. Hence $\mathcal F_1=\mathcal F(R)$ and thus $\FFD(R)=0$. Now, Proposition \ref{2.12} completes the proof.
\end{proof}

\begin{cor}\label{2.14}
	Let $R$ be a Pr\"ufer domain and $I$ a finitely generated ideal of $R$. Then $$\lim\limits_{\lr}\mathcal P_1\Big (\dfrac RI\Big )=\mathcal F_1\Big (\dfrac RI\Big ).$$
\end{cor}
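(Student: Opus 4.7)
The plan is to reduce the statement to the two results already established in this excerpt, namely Proposition \ref{2.11} (the semi-hereditary case) and Corollary \ref{2.13} (the semi-regular case), by splitting on whether $I$ is zero or not. The key external input is the Matlis characterization recalled in the paragraph just before Corollary \ref{2.13}: a commutative domain $R$ is Pr\"ufer if and only if $R/I$ is semi-regular for every nonzero finitely generated ideal $I$ of $R$.

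First I would dispose of the case $I=0$. Here $R/I=R$ is a Pr\"ufer domain, and any Pr\"ufer domain is a (commutative) semi-hereditary ring, so Proposition \ref{2.11} applies directly and gives $\lim_{\lr}\mathcal P_1(R)=\mathcal F_1(R)=\Mod(R)$.

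Next I would handle the main case $I\neq 0$. Since $R$ is a Pr\"ufer domain and $I$ is a nonzero finitely generated ideal, the Matlis characterization recalled above tells us that $R/I$ is a semi-regular ring. Then Corollary \ref{2.13}, applied to the ring $R/I$ in place of $R$, yields $\FFD(R/I)=0$ and consequently $\lim_{\lr}\mathcal P_1(R/I)=\mathcal F_1(R/I)=\mathcal F(R/I)$, which is exactly the conclusion.

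There is no real obstacle here, since both halves of the dichotomy are immediate consequences of results already proved in the paper together with a classical fact about Pr\"ufer domains that the authors have explicitly recalled a few lines earlier. The only thing one should be careful about is the convention on $I$: if the convention allows $I=0$, the first paragraph is needed, otherwise the second paragraph alone suffices.
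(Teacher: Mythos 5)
Your proposal is correct and follows essentially the same route as the paper: the authors also split on $I=0$ (where $R/I=R$ is Pr\"ufer, hence semi-hereditary, so $\lim\limits_{\lr}\mathcal P_1=\Mod(R)=\mathcal F_1$) versus $I\neq 0$ (where Matlis's characterization gives that $R/I$ is semi-regular and Corollary \ref{2.13} applies). No differences worth noting.
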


\begin{proof} If $I=0$, then  $\lim\limits_{\lr}\mathcal P_1=\Mod(R)=\mathcal F_1$. If $I\neq 0$, then $\dfrac RI$ is semi-regular and thus Corollary \ref{2.13} yields the desired result. 
\end{proof}

\begin{cor}\label{2.15} Let $R$ be a ring. Then\\
1) If $R$ is perfect, then $\lim\limits_{\lr}\mathcal P_1=\mathcal F_1$.\\
2)  If $R$ is an integral domain, then $\lim\limits_{\lr}\mathcal P_1=\mathcal F_1$.
\end{cor}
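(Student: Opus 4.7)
The plan is to reduce both parts to results already established earlier in the paper, using structural facts about perfect rings and about fields.

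For part (1), I would first show that $\mathcal F_1=\mathcal P_1$ when $R$ is perfect. Indeed, by definition of perfectness, every flat right $R$-module is projective. If $M\in\mathcal F_1$, pick a short exact sequence $0\to F_1\to F_0\to M\to 0$ where $F_0$ is free and $F_1$ is the first syzygy; since $\fd_R(M)\leq 1$, the module $F_1$ is flat, hence projective, so $\pd_R(M)\leq 1$. This gives $\mathcal F_1\subseteq\mathcal P_1$, and the reverse inclusion is automatic. With $\mathcal F_1=\mathcal P_1$ in hand, the chain
\[
\mathcal P_1 \;\subseteq\; \lim\limits_{\lr}\mathcal P_1 \;\subseteq\; \mathcal F_1 \;=\;\mathcal P_1,
\]
where the middle inclusion is Proposition \ref{2.5}(2), immediately forces $\lim\limits_{\lr}\mathcal P_1=\mathcal F_1$.

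For part (2), the quickest route is via Proposition \ref{2.12.1}(1). Let $Q$ denote the classical ring of quotients of the integral domain $R$; then $Q$ is a field, so every $Q$-module is free and in particular flat. Hence $\FFD(Q)=0$, and Proposition \ref{2.12.1}(1) yields $\lim\limits_{\lr}\mathcal P_1=\mathcal F_1$ directly.

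There is no real obstacle in either part; the work has already been done in Propositions \ref{2.5} and \ref{2.12.1}, and the corollary is simply a matter of checking that the hypotheses of those propositions are met in the two cases at hand. The only subtlety worth flagging is the sidedness convention for \emph{perfect}: since $\mathcal F_1$ and $\mathcal P_1$ consist of right $R$-modules, the statement in (1) should be read with ``perfect'' meaning right perfect (equivalently, the side for which flat modules are projective matches the side of the classes $\mathcal F_1$ and $\mathcal P_1$).
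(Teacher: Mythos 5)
Your proposal is correct and follows essentially the same route as the paper: for (1) the key fact is $\mathcal F_1=\mathcal P_1$ over a perfect ring (which you justify in slightly more detail via the syzygy argument, and then close the argument by sandwiching rather than by invoking $\lim\limits_{\lr}\mathcal F_1=\mathcal F_1$ directly), and for (2) the paper likewise applies Proposition \ref{2.12.1} after observing that the classical ring of quotients of a domain is its quotient field. Your remark on the sidedness of ``perfect'' is a fair point of care but does not change the argument.
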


\begin{proof}
	1) Assume that $R$ is perfect. Then $\mathcal F_1=\mathcal P_1$ and thus $\lim\limits_{\lr}\mathcal P_1=\lim\limits_{\lr}\mathcal F_1=\mathcal F_1.$ \\
	2) It is direct applying Proposition \ref{2.12.1} as the classical ring of quotients of an integral domain coincides with its quotient field or use \cite[Theorem 3.5]{HT}.
\end{proof}

\begin{cor}\label{2.16}
	Let $R$ be a ring. If $R$ is either semi-hereditary, perfect or an integral domain, then $$\mathcal P_1\mathcal F(R)=\mathcal F_1^{\fp}\mathcal F(R)=\mathcal F_1\mathcal F(R).$$
\end{cor}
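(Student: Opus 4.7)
The plan is to observe that this corollary is a direct consequence of the machinery already assembled in the section, since Corollary \ref{2.9} tells us that the three classes coincide whenever $\lim\limits_{\lr}\mathcal P_1=\mathcal F_1$. So the entire task reduces to verifying the equality $\lim\limits_{\lr}\mathcal P_1=\mathcal F_1$ in each of the three listed cases, and each of these has already been established.

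More precisely, I would split into the three cases as follows. If $R$ is right semi-hereditary, then Proposition \ref{2.11} gives $\lim\limits_{\lr}\mathcal P_1=\mathcal F_1=\Mod(R)$. If $R$ is perfect, Corollary \ref{2.15}(1) (which just uses that $\mathcal P_1=\mathcal F_1$ over a perfect ring, together with the tautology $\lim\limits_{\lr}\mathcal F_1=\mathcal F_1$) gives $\lim\limits_{\lr}\mathcal P_1=\mathcal F_1$. Finally, if $R$ is an integral domain, Corollary \ref{2.15}(2) (via Proposition \ref{2.12.1} or \cite[Theorem 3.5]{HT}) yields the same equality.

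Once $\lim\limits_{\lr}\mathcal P_1=\mathcal F_1$ is in hand, Corollary \ref{2.9} immediately delivers the chain of equalities $\mathcal P_1\mathcal F(R)=\mathcal F_1^{\fp}\mathcal F(R)=\mathcal F_1\mathcal F(R)$, and we are done. There is no real obstacle here: the statement is a clean packaging corollary that bundles together the three situations covered earlier in the section into a single conclusion about the collapse of the three Tor-orthogonal classes; the main work has been done in Proposition \ref{2.11}, Corollary \ref{2.15}, and Corollary \ref{2.9}, and nothing further is required beyond citing them.
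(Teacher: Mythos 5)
Your proof is correct and is exactly the paper's argument: the paper's own proof reads ``Combine Corollary \ref{2.9}, Proposition \ref{2.11} and Corollary \ref{2.15}.'' You have simply spelled out the case split that those citations encode, with no difference in substance.
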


\begin{proof}
	Combine Corollary \ref{2.9}, Proposition \ref{2.11} and Corollary \ref{2.15}.
\end{proof}

\begin{prop}\label{2.17} Let $R$ be an integral domain. Then the following assertions are equivalent:
	
1) $M$ is $\mathcal F_1$-flat;

2) $M$ is $\mathcal F_1^{\fp}$-flat;

3) $M$ is torsion-free.
	\end{prop}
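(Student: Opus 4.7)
The plan is to split the equivalence into the pieces (1) $\Leftrightarrow$ (2) and (1) $\Leftrightarrow$ (3). The first is immediate from Corollary \ref{2.16}, which already gives $\mathcal F_1^{\fp}\mathcal F(R) = \mathcal F_1\mathcal F(R)$ for integral domains. For (1) $\Rightarrow$ (3), I would exploit that every nonzero $r \in R$ is a non-zero-divisor over a domain, so the sequence $0 \to R \xrightarrow{\cdot r} R \to R/rR \to 0$ is a finite free resolution, placing $R/rR$ inside $\mathcal P_1 \subseteq \mathcal F_1$. The $\mathcal F_1$-flatness hypothesis then forces $\Tor_1^R(R/rR, M) = 0$, and this $\Tor$ module is readily identified from that same resolution with the $r$-torsion submodule $\{m \in M : rm = 0\}$. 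As $r$ ranges over nonzero elements of $R$, this yields torsion-freeness of $M$.

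For (3) $\Rightarrow$ (1), the key idea is to embed $M$ into a flat overmodule and apply dimension shifting. Let $Q$ denote the quotient field of $R$. Torsion-freeness of $M$ is equivalent to injectivity of the canonical map $M \to M \otimes_R Q$, producing a short exact sequence $0 \to M \to V \to V/M \to 0$ with $V := M \otimes_R Q$. The crucial observation is that $V$ is flat as an $R$-module: it is a $Q$-vector space, and since $Q$ is a flat $R$-algebra (being a localization of $R$), every $Q$-module is automatically flat over $R$. Applying $\Tor_\ast^R(H, -)$ for an arbitrary $H \in \mathcal F_1$, the relevant portion of the long exact sequence reads
$$\Tor_2^R(H, V/M) \lr \Tor_1^R(H, M) \lr \Tor_1^R(H, V);$$
the right-hand term vanishes by flatness of $V$, while the hypothesis $\fd_R(H) \leq 1$ forces the left-hand term to vanish. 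Therefore $\Tor_1^R(H, M) = 0$ and $M$ is $\mathcal F_1$-flat, as required.

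The main delicate point is the $R$-flatness of $V$, which rests on the standard transitivity fact that any flat module over a flat $R$-algebra is $R$-flat, combined with the triviality that every module over the field $Q$ is flat. Beyond this localization input, the argument is a routine concatenation of the Tor long exact sequence with the hypothesis on $\fd_R(H)$ and the material already established in the paper.
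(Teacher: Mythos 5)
Your proof is correct, but it takes a different route from the paper on the torsion-freeness part. The paper disposes of the whole proposition in one line by combining Corollary \ref{2.16} (which you also use for (1) $\Leftrightarrow$ (2)) with an external citation, namely Lemma 2.3 of Lee's paper \cite{L}, which is quoted as identifying $\mathcal F_1\mathcal F(R)$ with the torsion-free modules over a domain. You instead prove (1) $\Leftrightarrow$ (3) from scratch: the direction (1) $\Rightarrow$ (3) via the Koszul-type resolution $0 \to R \xrightarrow{\cdot r} R \to R/rR \to 0$, which exhibits $R/rR$ as an element of $\mathcal P_1^{\fp} \subseteq \mathcal F_1$ and identifies $\Tor_1^R(R/rR,M)$ with the $r$-torsion of $M$; and the direction (3) $\Rightarrow$ (1) by embedding $M$ in the flat module $M\otimes_R Q$ and dimension-shifting against $\fd_R(H)\leq 1$. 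All the steps check out: the kernel of $M\to M\otimes_R Q$ is indeed the torsion submodule, $M\otimes_R Q$ is $R$-flat as a $Q$-vector space over the flat $R$-algebra $Q$, and the relevant three-term piece of the long exact sequence does the rest. What your version buys is self-containment --- it makes transparent, inside the paper, exactly why the $\mathcal F_1$-flat notion specializes to torsion-freeness over domains (a point the introduction only asserts by citation); what the paper's version buys is brevity. Note also that your (1) $\Rightarrow$ (3) argument only uses finitely presented test modules $R/rR\in\mathcal F_1^{\fp}$, so it actually gives (2) $\Rightarrow$ (3) directly, which together with your (3) $\Rightarrow$ (1) and the trivial (1) $\Rightarrow$ (2) would close the cycle without needing Corollary \ref{2.16} at all.
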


\begin{proof} It follows from Corollary \ref{2.16} and \cite[Lemma 2.3]{L}.
	\end{proof}

The remaining results of this section deal with characterizing the rings of weak global dimension less than or equal to 1 in terms of the behavior of the $\mathcal F_1$-flat modules.

\begin{prop}\label{2.18} Let $R$ be a ring such that $\wdim(R)\leq 1$. Then  $$\lim\limits_{\lr}\mathcal F_1^{\fp}=\mathcal F_1.$$ If moreover $R$ is right coherent, then $\lim\limits_{\lr}\mathcal P_1=\mathcal F_1.$
\end{prop}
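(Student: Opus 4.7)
The plan is to first note that the hypothesis $\wdim(R)\le 1$ forces $\mathcal F_1=\Mod(R)$, since by definition every right $R$-module then has flat dimension at most one. The equality $\lim\limits_{\lr}\mathcal F_1^{\fp}=\mathcal F_1$ therefore reduces to showing that every right $R$-module belongs to $\lim\limits_{\lr}\mathcal F_1^{\fp}$. This is immediate: any module is a direct limit of finitely presented modules, and each such finitely presented module lies in $\mathcal F_1^{\fp}$ precisely because $\mathcal F_1=\Mod(R)$. Combined with the inclusion $\lim\limits_{\lr}\mathcal F_1^{\fp}\subseteq\mathcal F_1$ from Proposition \ref{2.5}, this gives the first equality.

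For the second assertion, the strategy under the additional right coherence hypothesis is to upgrade the previous conclusion by showing that every finitely presented right $R$-module in fact lies in $\mathcal P_1^{\fp}$. Once this is established, the fact that every module is a direct limit of finitely presented modules, combined with Proposition \ref{2.5}(1) giving $\lim\limits_{\lr}\mathcal P_1^{\fp}=\lim\limits_{\lr}\mathcal P_1$ and with Proposition \ref{2.5}(2) giving $\lim\limits_{\lr}\mathcal P_1\subseteq\mathcal F_1=\Mod(R)$, immediately forces $\lim\limits_{\lr}\mathcal P_1=\mathcal F_1$.

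To see that an arbitrary finitely presented module $M$ belongs to $\mathcal P_1^{\fp}$, I would choose a short exact sequence $0\to K\to F\to M\to 0$ with $F$ finitely generated free. Right coherence of $R$ ensures that every finitely generated submodule of a finitely presented module is itself finitely presented, so $K$ is finitely presented. The assumption $\wdim(R)\le 1$ yields $\fd_R(M)\le 1$, whence $K$ must be flat. Being simultaneously finitely presented and flat, $K$ is projective; hence the sequence just produced is a finite projective resolution of $M$ by finitely generated projectives, i.e.\ $M\in\mathcal P_1^{\fp}$.

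The only delicate point is the passage from flat to projective for $K$, which rests on $K$ being finitely presented, and this is precisely where right coherence of $R$ is genuinely used. Without this hypothesis one cannot guarantee that the kernel $K$ is finitely presented, so the upgrade from $\mathcal F_1^{\fp}$ to $\mathcal P_1^{\fp}$ would break down, which is why the first statement of the proposition does not require coherence but the second one does.
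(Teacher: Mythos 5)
Your proposal is correct and follows essentially the same route as the paper: both reduce the first equality to the fact that $\wdim(R)\leq 1$ forces $\mathcal F_1=\Mod(R)$ and every module is a direct limit of finitely presented modules, and both obtain the second equality from $\mathcal F_1^{\fp}=\mathcal P_1^{\fp}$ under coherence together with Proposition \ref{2.5}. The only difference is that you spell out the syzygy argument (finitely presented $+$ flat $\Rightarrow$ projective for the kernel $K$) that the paper leaves implicit.
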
	

\begin{proof}
	Since $\wdim(R)\leq 1$, then $\mathcal F_1=\Mod(R)$ and $\mathcal F_1^{\fp}$ is the class of all finitely presented modules. Now, as any module is direct limit of finitely presented modules, the result easily follows. Assume that $R$ is moreover right coherent. Then $\mathcal F_1^{\fp}=\mathcal P_1^{\fp}$. Hence $\lim\limits_{\lr}\mathcal P_1^{\fp}=\lim\limits_{\lr}\mathcal F_1^{\fp}=\mathcal F_1.$ It follows, by Proposition \ref{2.5}, that $\lim\limits_{\lr}\mathcal P_1=\mathcal F_1,$ as desired.
\end{proof}

\begin{cor}\label{2.19}
	Let $R$ be a ring such that $\wdim(R)\leq 1$. Then $$\mathcal F_1^{\fp}\mathcal F(R)=\mathcal F_1\mathcal F(R).$$
\end{cor}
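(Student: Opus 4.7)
The plan is to deduce this corollary as an immediate consequence of two earlier results in the excerpt, namely Proposition \ref{2.18} and Corollary \ref{2.8}. The strategy is to use the hypothesis $\wdim(R) \leq 1$ to collapse the direct-limit closure of $\mathcal F_1^{\fp}$ down to all of $\mathcal F_1$, and then invoke the general principle that this collapse forces the corresponding Tor-orthogonal classes to coincide.

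First I would apply Proposition \ref{2.18}: under the assumption $\wdim(R) \leq 1$, we have the equality $\lim\limits_{\lr}\mathcal F_1^{\fp} = \mathcal F_1$. This is exactly the hypothesis of Corollary \ref{2.8}, which then delivers $\mathcal F_1^{\fp}\mathcal F(R) = \mathcal F_1\mathcal F(R)$, as desired. Alternatively, one could argue more directly: since $\wdim(R) \leq 1$, every right $R$-module lies in $\mathcal F_1$, so $\mathcal F_1^{\fp}$ coincides with the class of all finitely presented right modules; taking $\top$ then gives $\mathcal F_1^{\fp}\mathcal F(R) = \widehat{\mathcal F_1^{\fp}}^{\top}$, and by Lemma \ref{2.1}(1) combined with Proposition \ref{2.18} this equals $\mathcal F_1^{\top} = \mathcal F_1\mathcal F(R)$.

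There is essentially no obstacle here: both ingredients are already in place, and the only thing to verify is that the chain of implications $\wdim(R) \leq 1 \Rightarrow \lim\limits_{\lr}\mathcal F_1^{\fp} = \mathcal F_1 \Rightarrow \mathcal F_1^{\fp}\mathcal F(R) = \mathcal F_1\mathcal F(R)$ is correctly cited. The proof will therefore be a one-line combination of Proposition \ref{2.18} and Corollary \ref{2.8}.
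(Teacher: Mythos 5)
Your proof is correct and is exactly the paper's own argument: the paper proves Corollary \ref{2.19} by combining Proposition \ref{2.18} (which gives $\lim\limits_{\lr}\mathcal F_1^{\fp}=\mathcal F_1$ when $\wdim(R)\leq 1$) with Corollary \ref{2.8}. Nothing further is needed.
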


\begin{proof}
	Combine Proposition \ref{2.18} and Corollary \ref{2.8}.
\end{proof}

\begin{prop} Let $R$ be a ring. Then the following assertions are equivalent.
	
	1) Any $\mathcal F_1^{\fp}$-flat module is flat;
	
	2) Any $\mathcal F_1$-flat module is flat;
	
	3) $\wdim(R)\leq 1$.
	
	\end{prop}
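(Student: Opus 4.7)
The plan is to prove the cycle (3) $\Rightarrow$ (1) $\Rightarrow$ (2) $\Rightarrow$ (3). The easiest implication is (1) $\Rightarrow$ (2): Proposition \ref{2.2}(1) gives $\mathcal F_1\mathcal F(R)\subseteq \mathcal F_1^{\fp}\mathcal F(R)$, so any $\mathcal F_1$-flat module is already $\mathcal F_1^{\fp}$-flat, hence flat under hypothesis (1).

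For (3) $\Rightarrow$ (1), I would argue that when $\wdim(R)\leq 1$, every right $R$-module, and \emph{a fortiori} every finitely presented right module, has flat dimension at most one. Consequently $\mathcal F_1^{\fp}$ coincides with the full class of finitely presented right $R$-modules. Now the classical criterion for flatness asserts that a left $R$-module $M$ is flat if and only if $\Tor_1^R(H,M)=0$ for every finitely presented right module $H$. Thus under (3), being $\mathcal F_1^{\fp}$-flat is exactly being flat, which yields (1).

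The main direction is (2) $\Rightarrow$ (3), and this is where one must extract from hypothesis (2) the rigidity needed on $R$. The cleanest route is through syzygies: let $M$ be any left $R$-module and fix a short exact sequence $0\lr K\lr F\lr M\lr 0$ with $F$ flat (even free). Then $F$ is trivially $\mathcal F_1$-flat, and by Proposition \ref{2.2}(3) the class $\mathcal F_1\mathcal F(R)$ is closed under submodules, so $K\in \mathcal F_1\mathcal F(R)$. Applying hypothesis (2) gives that $K$ is flat, whence $\fd_R(M)\leq 1$. Since $M$ was arbitrary, $\wdim(R)\leq 1$. (Equivalently, one could invoke Proposition \ref{2.2}(4) directly: every left ideal is $\mathcal F_1$-flat, so under (2) every left ideal is flat, which is a standard characterization of $\wdim(R)\leq 1$.)

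There is no serious obstacle here; the only point requiring care is the appeal to the classical fact that flatness can be tested against finitely presented modules in the step (3) $\Rightarrow$ (1), and the reminder that the closure of $\mathcal F_1\mathcal F(R)$ under submodules—already established in Proposition \ref{2.2}(3)—is what allows syzygies of arbitrary modules to lie in $\mathcal F_1\mathcal F(R)$ in the step (2) $\Rightarrow$ (3).
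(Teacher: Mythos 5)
Your proof is correct, and it reaches the same conclusions by a more elementary route than the paper's. For $(3)\Rightarrow(1)$ the paper passes through Corollary \ref{2.19} (hence through Proposition \ref{2.18} and the direct-limit identity $\lim\limits_{\lr}\mathcal F_1^{\fp}=\mathcal F_1$) to get $\mathcal F_1^{\fp}\mathcal F(R)=\mathcal F_1\mathcal F(R)$ and then concludes; you instead observe directly that $\wdim(R)\leq 1$ forces $\mathcal F_1^{\fp}$ to be the whole class of finitely presented right modules and invoke the classical fact that flatness is detected by $\Tor_1$ against finitely presented modules, which collapses $\mathcal F_1^{\fp}$-flatness to flatness in one step. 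For $(2)\Rightarrow(3)$ the paper's terse argument is really the Tor-torsion-theory one: $\mathcal F_1\mathcal F(R)\subseteq\mathcal F(R)$ gives $\mathcal F_1={}^{\top}\mathcal F_1\mathcal F(R)\supseteq{}^{\top}\mathcal F(R)=\Mod(R)$ via Proposition \ref{2.4}; your syzygy argument (first syzygies of arbitrary left modules are $\mathcal F_1$-flat by Proposition \ref{2.2}(3), hence flat by hypothesis, so $\wdim(R)\leq 1$ by left-right symmetry of the weak global dimension), or equivalently the remark via Proposition \ref{2.2}(4) that all left ideals become flat, avoids the torsion-theory formalism entirely. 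The paper's approach buys uniformity with the rest of its machinery; yours is more self-contained and makes the mechanism of each implication visible. Both are sound.
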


\begin{proof} 1) $\Rightarrow$ 2) is clear.\\
	2) $\Rightarrow$ 3) Assume that any $\mathcal F_1$-flat module is flat. Then $\mathcal F_1=\Mod(R)$ so that $\wdim(R)\leq 1$.\\
	3) $\Rightarrow$ 1) Assume that $\wdim(R)\leq 1$. Then, $\mathcal F_1=\Mod(R)$ and thus any $\mathcal F_1$-flat module is flat. Also,
by corollary \ref{2.19}, $\mathcal F_1\mathcal F(R)=\mathcal F_1^{\fp}\mathcal F(R)$. Therefore any $\mathcal F_1^{\fp}$-flat module is flat, as desired.
	
\end{proof}

\section{ Coherence with respect to $\mathcal F_1^{\fp}$-flat modules }

This section introduces and studies a new  class of rings that we term $\mathcal F_1^{\fp}$-coherent rings. The new concept of a $\mathcal F_1^{\fp}$-coherent ring turns out to possess exactly the same behavior vis-\`a-vis the $\mathcal F_1^{\fp}$-flat modules than the classical one of a coherent ring vis-\`a-vis the flat modules. \\

We begin by giving the definition of $\mathcal F_1^{\fp}$-coherent rings.

\begin{defin}\label{3.0} A ring
	$R$ is called right $\mathcal F_1^{\fp}$-coherent if any finitely generated flat right submodule of a free module is finitely presented (an thus projective).
\end{defin}



The following proposition provides a bunch of examples of $\mathcal F_1^{\fp}$-coherent rings and records the fact that the $\mathcal F_1^{\fp}$-coherence permits to unify interesting classes of rings, namely semi-hereditary rings, coherent rings (and thus Noetherian rings), perfect rings and integral domains.

Recall that a commutative ring $R$ is said to be a semi-coherent ring if $\Hom_R(B,C)$ is a submodule of a flat $R$-module for any
pair of injective R-modules $B$ and $C$ \cite[Definition, page 344]{M1}. Commutative coherent rings and integral domains are semi-coherent rings \cite[Examples, page 345]{M1}.

\begin{prop}\label{3.1}
	1) Any right semi-hereditary ring is $\mathcal F_1^{\fp}$-coherent.\\
	2) Any right coherent ring, and thus any right Noetherian ring, is $\mathcal F_1^{\fp}$-coherent.\\
	3) Any right perfect ring is $\mathcal F_1^{\fp}$-coherent.\\
	4) Any semi-coherent ring is $\mathcal F_1^{\fp}$-coherent.\\
	5) Any integral domain is $\mathcal F_1^{\fp}$-coherent.
\end{prop}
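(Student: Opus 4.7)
The plan is to verify each of the five sub-claims separately, in each case showing that any finitely generated flat right submodule $M$ of a free right $R$-module $F$ is finitely presented, as required by Definition \ref{3.0}. The first uniform step is a reduction: since $M$ has a finite generating set, only finitely many basis elements of $F$ occur among the coordinates of these generators, so $M$ embeds in a finitely generated free direct summand $R^n$ of $F$. After this reduction the arguments split along the five classes.

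For (1)--(3) the conclusion is essentially tautological. When $R$ is right semi-hereditary, every finitely generated submodule of a projective module is projective; thus $M\subseteq R^n$ is projective, hence finitely presented. When $R$ is right coherent, the standard equivalent formulation of coherence says that every finitely generated submodule of a finitely presented module (such as $R^n$) is finitely presented, so $M$ is finitely presented. When $R$ is right perfect, Bass's theorem gives that every flat right $R$-module is projective, and a finitely generated projective module is always finitely presented; hence $M$ is finitely presented.

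For (4) and (5) the argument relies on Matlis's work on semi-coherence recalled in the paragraph preceding the proposition. The crucial point to invoke (or to extract from the defining property that $\Hom_R(B,C)$ embeds in a flat module for all injective $B,C$) is that over a semi-coherent ring every finitely generated flat module is finitely presented; granting this, $M$ is finitely presented. Part (5) then follows immediately from (4) since every integral domain is semi-coherent, as noted just before the statement; alternatively, one may quote the classical theorem that a finitely generated flat module over a commutative integral domain is projective, which, combined with the fact that finitely generated projective modules are finitely presented, closes the argument. The main obstacle is item (4): drawing the implication ``semi-coherent $\Rightarrow$ every finitely generated flat module is finitely presented'' out of the $\Hom$-theoretic definition of semi-coherence is the nontrivial step, and it seems most natural to carry it out either by citing the relevant characterization from Matlis's original paper or by a $\Hom$-tensor duality argument using injective envelopes of flat modules.
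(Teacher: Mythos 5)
Your proof is correct and follows essentially the same route as the paper, which simply declares (1)--(3) clear, cites Matlis's Proposition~1.3 for (4), and derives (5) from (4) via the observation that integral domains are semi-coherent. The only caveat is that for (4) you should quote Matlis's result in the form actually needed (a finitely generated flat \emph{submodule of a free module} over a semi-coherent ring is projective, hence finitely presented) rather than the stronger unqualified claim that every finitely generated flat module is finitely presented; since the module in Definition~\ref{3.0} is by hypothesis a submodule of a free module, this restricted form suffices.
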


\begin{proof}
	1), 2) and 3) are clear.\\
	4) It follows from \cite[Proposition 1.3]{M1}.\\
	5) It is direct as any integral domain is semi-coherent.
\end{proof}


Next, we present the main theorem of this section. It sheds light on inherent properties of $\mathcal F_1^{\fp}$-coherent rings which are very similar to those of coherent rings. More precisely, it highlights the fact that $\mathcal F_1^{\fp}$-coherent rings possess exactly the same behavior with respect to $\mathcal F_1^{\fp}$-flat modules than coherent rings with respect to flat modules. In fact, recall that a ring $R$ is right coherent if and only if any direct product of flat left $R$-modules is flat if and only if any direct product of copies of $R$ is flat \cite[Theorem 2.1]{C}. Also, it is proved in \cite{CS} that $R$ is right coherent if and only if (a left $R$-module $M$ is flat $\Leftrightarrow$ $M^{++}$ is flat) \cite[Theorem 1]{CS}.\\

First, we establish the following lemmas.

\begin{lem}\label{3.1.1}
	Let $R$ be a ring. Then any product of $\mathcal P_1$-flat modules is $\mathcal P_1$-flat.
\end{lem}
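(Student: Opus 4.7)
My plan is a two-step reduction: first shrink the Tor-vanishing condition to a test on finitely presented objects in $\mathcal P_1^{\fp}$, and then exploit the fact that a finitely generated projective module commutes with arbitrary products through the tensor product.

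Let $(M_\lambda)_{\lambda\in\Lambda}$ be a family of $\mathcal P_1$-flat left $R$-modules and set $M=\prod_\lambda M_\lambda$. I need $\Tor_1^R(H,M)=0$ for every $H\in\mathcal P_1$. By Proposition \ref{2.7}(3) one has $\mathcal P_1\mathcal F(R)=\mathcal P_1^{\fp}\mathcal F(R)$, so it suffices to verify the vanishing for $H\in\mathcal P_1^{\fp}$. (Alternatively one could invoke Proposition \ref{2.5}(1), namely $\mathcal P_1\subseteq\lim\limits_{\lr}\mathcal P_1^{\fp}$, combined with the commutation of $\Tor$ with direct limits.)

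Given such an $H$, I would construct a short exact sequence
$$0\lr P_1\lr P_0\lr H\lr 0$$
with both $P_0$ and $P_1$ finitely generated projective: pick $P_0$ to be a finitely generated free cover of $H$; the kernel $P_1$ is finitely generated because $H$ is finitely presented, and it is projective because $\pd_R(H)\leq 1$. Applying $-\otimes_R M$ and invoking the canonical isomorphisms $P_i\otimes_R M\cong\prod_\lambda(P_i\otimes_R M_\lambda)$ — which hold precisely because each $P_i$ is finitely generated projective — I identify
$$\Tor_1^R(H,M)\cong\prod_\lambda\Tor_1^R(H,M_\lambda),$$
and every factor on the right vanishes by $\mathcal P_1$-flatness of $M_\lambda$.

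The conceptual pivot is the reduction to the finitely presented case via Proposition \ref{2.7}(3) (or Proposition \ref{2.5}). Without it, $P_0$ and $P_1$ would only be projective rather than finitely generated projective, and the commutation $P\otimes_R\prod\cong\prod(P\otimes_R-)$ would generally fail. Once this reduction is in place, the remaining computation is routine.
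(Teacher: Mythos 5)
Your proof is correct and follows essentially the same route as the paper: both reduce the test to $H\in\mathcal P_1^{\fp}$ via Proposition \ref{2.7}(3), note that such an $H$ admits a length-one resolution by finitely generated projectives, and deduce $\Tor_1^R(H,\prod M_\lambda)\cong\prod\Tor_1^R(H,M_\lambda)$. The only difference is cosmetic: the paper cites \cite[Lemma 2.10]{CD} for this commutation of $\Tor$ with products ($N$ being $2$-presented), whereas you prove it directly from $P_i\otimes_R\prod M_\lambda\cong\prod(P_i\otimes_R M_\lambda)$.
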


\begin{proof} let $(M_{\alpha})_{\alpha\in\Lambda}$ be a family of $\mathcal P_1$-flat modules. Let $N\in$ $\mathcal P_1^{\fp}$. Then there exists an exact sequence $0 \longrightarrow K\longrightarrow L\longrightarrow N\longrightarrow 0$ such that $L$ is a finitely generated free $R$-module and $K$ is a finitely generated projective module, and thus $N$ is $2$-presented. Therefore, by \cite[Lemma 2.10]{CD}, $$\Tor_{1}^R(N,\prod M_{\alpha}) \cong \prod \Tor_{1}^R(N,M_{\alpha}).$$ Hence $\Tor_{1}^R(N,\prod M_{\alpha})=0$. It follows that $\prod M_{\alpha}$ is $\mathcal P_1^{\fp}$-flat, and thus, by Proposition \ref{2.7}(3), $\prod M_{\alpha}$ is $\mathcal P_1$-flat, as desired.  \\
	\end{proof}

\begin{lem} \label{3.1.2} Let $R$ and $S$ be rings and consider the situation $(A_R,$ $_SB_R,$ $_SC)$. If $A$ is a finitely presented $R$-module, then there is a natural isomorphism $$ A\otimes_R\Hom_S(B,C)\cong\Hom_S(\Hom_R(A,B),C).$$ 
	\end{lem}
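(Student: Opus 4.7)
The plan is a classical ``presentation plus five-lemma'' argument. I would begin by introducing the natural evaluation map
$$\Phi_A:\ A\otimes_R\Hom_S(B,C) \lr \Hom_S(\Hom_R(A,B),C), \qquad \Phi_A(a\otimes f)(g) = f(g(a)),$$
and checking it is well-defined and $S$-linear in the target. The $R$-balancing $\Phi_A(ar\otimes f) = \Phi_A(a\otimes rf)$ uses the right $R$-linearity of $g\in\Hom_R(A,B)$ together with the induced left $R$-module structure $(rf)(b)=f(br)$ on $\Hom_S(B,C)$; the $S$-linearity of the image uses the left $S$-action on $\Hom_R(A,B)$ inherited from the bimodule $B$. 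Naturality in $A$ is immediate from the formula.

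Next I would verify that $\Phi_R$ is an isomorphism. Both sides identify canonically with $\Hom_S(B,C)$ (the source via $r\otimes f\mapsto rf$, the target via the evaluation-at-$1$ isomorphism $\Hom_R(R,B)\cong B$), and under these identifications $\Phi_R$ becomes the identity map. Since both functors $-\otimes_R\Hom_S(B,C)$ and $\Hom_S(\Hom_R(-,B),C)$ commute with finite direct sums in the first slot, additivity then yields that $\Phi_{R^n}$ is an isomorphism for every $n\geq 1$.

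Finally, I would exploit the finite presentation $R^m \stackrel{\alpha}{\lr} R^n \lr A \lr 0$ to form the commutative ladder
$$\begin{array}{ccccccc}
R^m \otimes_R \Hom_S(B,C) & \lr & R^n \otimes_R \Hom_S(B,C) & \lr & A \otimes_R \Hom_S(B,C) & \lr & 0 \\
\downarrow\Phi_{R^m} & & \downarrow\Phi_{R^n} & & \downarrow\Phi_A & & \\
\Hom_S(\Hom_R(R^m,B),C) & \lr & \Hom_S(\Hom_R(R^n,B),C) & \lr & \Hom_S(\Hom_R(A,B),C) & \lr & 0
\end{array}$$
whose top row is exact by right-exactness of the tensor product. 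Since $\Phi_{R^m}$ and $\Phi_{R^n}$ are isomorphisms by the previous step, a five-lemma chase (equivalently, a direct comparison of cokernels) will force $\Phi_A$ to be an isomorphism once the bottom row is shown to be exact. I expect the exactness of this bottom row to be the main obstacle: exactness in the middle follows by applying the left-exact contravariant functor $\Hom_R(-,B)$ to get $0\to\Hom_R(A,B)\to B^n\to B^m$ (using $\Hom_R(R^k,B)\cong B^k$) and then the left-exact contravariant $\Hom_S(-,C)$, while the surjectivity on the right amounts to lifting $S$-linear maps $\Hom_R(A,B)\to C$ along the inclusion $\Hom_R(A,B)\hookrightarrow B^n$. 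This is the delicate point and is where one typically exploits additional structure on the data (such as $C$ being injective as a left $S$-module, as holds for the character module $\mathbb{Q}/\mathbb{Z}$ over $\mathbb{Z}$ used throughout the paper).
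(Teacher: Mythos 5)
Your setup is the standard one (the evaluation map, the base case $A=R$, additivity, and the presentation-plus-five-lemma ladder), and it is exactly the argument the paper points to: the paper gives no details for this lemma, merely saying the proof is ``dual and similar to'' Rotman's Lemmas 3.59 and 3.60, whose proof is this ladder. However, the step you defer as ``the delicate point'' is a genuine gap, and it cannot be closed for the statement as written: the bottom row of your diagram is not exact in general, and the lemma itself is false without a further hypothesis on $C$. Take $R=S=\mathbb{Z}$, $A=\mathbb{Z}/2\mathbb{Z}$, $B=C=\mathbb{Z}$: the left-hand side is $\mathbb{Z}/2\mathbb{Z}\otimes_{\mathbb{Z}}\mathbb{Z}\cong\mathbb{Z}/2\mathbb{Z}$, while the right-hand side is $\Hom_{\mathbb{Z}}(\Hom_{\mathbb{Z}}(\mathbb{Z}/2\mathbb{Z},\mathbb{Z}),\mathbb{Z})=\Hom_{\mathbb{Z}}(0,\mathbb{Z})=0$. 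The missing hypothesis is that $_SC$ be injective; it appears in Rotman's Lemma 3.60 and in the paper's Lemma \ref{3.1.3} (the only place Lemma \ref{3.1.2} is invoked, with $C$ a character module), so its omission from the statement of Lemma \ref{3.1.2} is evidently an oversight --- but it means no proof of the literal statement can succeed, and you were right to suspect that injectivity of $C$ is where the argument must lean.

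One smaller correction: you claim that exactness of the bottom row \emph{in the middle} comes for free from left-exactness of the two contravariant Hom functors, and that only the surjectivity at the right-hand end is delicate. That is not so. Writing $L$ for the image of $B^n\to B^m$ (with $B^k=\Hom_R(R^k,B)$), left-exactness of $\Hom_S(-,C)$ only identifies $\ker\bigl(\Hom_S(B^n,C)\to\Hom_S(\Hom_R(A,B),C)\bigr)$ with $\Hom_S(L,C)$; to conclude that this kernel equals the image of $\Hom_S(B^m,C)$ you must extend $S$-maps $L\to C$ along the inclusion $L\hookrightarrow B^m$, which again requires $C$ injective. Indeed, in the counterexample above the failure occurs precisely at the middle spot: the bottom row is $\mathbb{Z}\xrightarrow{2}\mathbb{Z}\to 0\to 0$, and correspondingly $\Phi_A$ is surjective but not injective. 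Once the hypothesis that $_SC$ is injective is added, $\Hom_S(-,C)$ is exact, the bottom row is exact at both relevant spots, and your five-lemma chase completes the proof; this is the form in which the lemma is actually used in the paper.
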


\begin{proof}
	The proof is dual and similar to the proof of \cite[Lemma 3.59 and Lemma 3.60]{R}.
\end{proof}

The folowing result is dual to \cite[Lemma 2.7]{CD}.

\begin{lem} \label{3.1.3} Let $R$ and $S$ be rings and $n\geq 1$ a positive integer. Consider
	the situation $(A_R,$ $_SB_R,$ $_SC)$ with $A$ $n$-presented and $_SC$ injective.
	Then there is an isomorphism $$\Tor_{n-1}^R(A,\Hom_S(B,C))\cong\Hom_S(\Ext_R^{n-1}(A,B),C).$$\end{lem}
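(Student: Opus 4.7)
The plan is to proceed by induction on $n \geq 1$, taking Lemma \ref{3.1.2} as the base case and using dimension shifting via a short exact sequence arising from the hypothesis that $A$ is $n$-presented.

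For the base case $n=1$, note that $\Tor_0^R(A,\Hom_S(B,C)) = A\otimes_R\Hom_S(B,C)$ and $\Hom_S(\Ext_R^0(A,B),C) = \Hom_S(\Hom_R(A,B),C)$, so the claimed isomorphism is precisely the content of Lemma \ref{3.1.2}, which applies since a $1$-presented module is finitely presented.

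For the inductive step with $n\geq 2$, since $A$ is $n$-presented I would choose a short exact sequence $0\to K\to P\to A\to 0$ with $P$ finitely generated free and $K$ an $(n-1)$-presented $R$-module. Applying $\Hom_R(-,B)$ and using that $\Ext_R^i(P,B)=0$ for $i\geq 1$, the long exact Ext-sequence yields $\Ext_R^{n-1}(A,B)\cong\Ext_R^{n-2}(K,B)$ when $n\geq 3$, and an exact sequence $\Hom_R(P,B)\to\Hom_R(K,B)\to\Ext_R^1(A,B)\to 0$ when $n=2$. Dually, applying $-\otimes_R\Hom_S(B,C)$ and using that $P$ is flat, the long exact Tor-sequence yields $\Tor_{n-1}^R(A,\Hom_S(B,C))\cong\Tor_{n-2}^R(K,\Hom_S(B,C))$ when $n\geq 3$, and $\Tor_1^R(A,\Hom_S(B,C)) \cong \ker\bigl(K\otimes_R\Hom_S(B,C)\to P\otimes_R\Hom_S(B,C)\bigr)$ when $n=2$.

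For $n\geq 3$, the inductive hypothesis applied to $K$ (which is $(n-1)$-presented, so the result with index $n-1$ applies) gives $\Tor_{n-2}^R(K,\Hom_S(B,C))\cong\Hom_S(\Ext_R^{n-2}(K,B),C)$, and combining the two isomorphisms above finishes this case. For $n=2$, I apply $\Hom_S(-,C)$ to the cokernel sequence above; since $C$ is injective, $\Hom_S(-,C)$ is exact, yielding the left-exact sequence
\[
0\to\Hom_S(\Ext_R^1(A,B),C)\to\Hom_S(\Hom_R(K,B),C)\to\Hom_S(\Hom_R(P,B),C).
\]
Invoking Lemma \ref{3.1.2} for both $K$ and $P$ (both finitely presented) identifies the last two terms with $K\otimes_R\Hom_S(B,C)$ and $P\otimes_R\Hom_S(B,C)$ respectively, so $\Hom_S(\Ext_R^1(A,B),C)$ is the kernel of the same map whose kernel is $\Tor_1^R(A,\Hom_S(B,C))$.

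The main obstacle is the $n=2$ case, where the Ext- and Tor-sequences do not deliver bare isomorphisms but only four-term exact sequences, so I must compare kernels rather than directly splicing isomorphisms. This forces me to invoke the naturality of the isomorphism in Lemma \ref{3.1.2} with respect to the map $K\to P$, in order to conclude that the square
\[
\begin{array}{ccc}
K\otimes_R\Hom_S(B,C) & \longrightarrow & P\otimes_R\Hom_S(B,C) \\
\Big\downarrow\cong & & \Big\downarrow\cong \\
\Hom_S(\Hom_R(K,B),C) & \longrightarrow & \Hom_S(\Hom_R(P,B),C)
\end{array}
\]
commutes, so that the two kernels are canonically identified. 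Verifying that the isomorphism of Lemma \ref{3.1.2} is natural in its first argument is straightforward but is the crucial point that makes the induction go through.
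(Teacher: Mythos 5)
Your proof is correct and follows essentially the same route as the paper, which simply cites Lemma \ref{3.1.2} as the base case and declares the induction ``dual to the proof of Rotman's Theorem 9.51'' --- i.e.\ exactly the dimension-shifting argument on a presentation $0\to K\to P\to A\to 0$ that you carry out, including the naturality check needed at $n=2$. No discrepancies to report; your write-up just supplies the details the paper leaves to the reader.
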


\begin{proof}
	The proof is based on Lemma \ref{3.1.2} and it is dual to the proof of \cite[Theorem 9.51 and the remark following it]{R}. 
\end{proof}

Next, we announce the main theorem of this section. Also, it is worth reminding the reader of the
useful adjointness isomorphism for derived functors
$$\mbox {Tor}_n^R(A,B)^+\cong\mbox {Ext}_R^n(A,B^+)$$ for any left $R$-module $B$,
any right $R$-module $A$ and any integer $n\geq 0$.

\begin{thm}\label{3.2}
	Let $R$ be a ring. Then the following are equivalent:
	
	1) $R$ is right $\mathcal F_1^{\fp}$-coherent;
	
	2) Any direct product  of $\mathcal F_1^{\fp}$-flat $R$-modules is $\mathcal F_1^{\fp}$-flat;
	
	3) Any direct product of flat modules is $\mathcal F_1^{\fp}$-flat;
	
	4) Any direct product of copies of $R$ is $\mathcal F_1^{\fp}$-flat;
	
	
	5) Any inverse limit of $\mathcal F_1^{\fp}$-flat modules is $\mathcal F_1^{\fp}$-flat;
	
	6) Any module $M\in \mathcal F_1^{\fp}$ is $2$-presented;
	
	7)  $\mathcal F_1^{\fp}=\mathcal P_1^{\fp}$;
	
	8) A left $R$-module $M$ is  $\mathcal F_1^{\fp}$-flat if and only if $M^{++}$ is $\mathcal F_1^{\fp}$-flat;
	
	9) Any torsionless left $R$-module is $\mathcal F_1^{\fp}$-flat.
	
	10) $\mathcal F_1^{\fp}\mathcal F(R)=\mathcal P_1\mathcal F(R)$;
	
	11) $\lim\limits_{\lr}\mathcal F_1^{\fp}=\lim\limits_{\lr}\mathcal P_1$;
	
	12) $\widehat {\mathcal P_1}=\widehat {\mathcal F_1^{\fp}}$.
	
	
	
	
	
\end{thm}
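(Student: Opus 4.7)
The plan is to prove the twelve conditions equivalent by organising them around the pivotal equality $\mathcal F_1^{\fp}=\mathcal P_1^{\fp}$ of condition (7). First I would establish the backbone $(1)\Leftrightarrow(6)\Leftrightarrow(7)$; then derive the product reformulations $(2)$--$(5)$ and $(9)$, the character-module characterisation $(8)$, and the Tor-torsion theory reformulations $(10)$--$(12)$ from this backbone; and finally close each implication back to $(1)$.

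For the backbone, given $N\in\mathcal F_1^{\fp}$ choose a free presentation $0\to K\to L\to N\to 0$ with $L$ finitely generated free. Finite presentation of $N$ makes $K$ finitely generated and $\fd_R(N)\leq 1$ makes $K$ flat. Under (1), $K$ is finitely presented, hence projective (finitely presented flat modules are projective), so both (6) and (7) hold. Conversely, any finitely generated flat submodule of a free module embeds in a finitely generated free summand, so (6) or (7) forces that submodule to be finitely presented, yielding (1). The equivalence $(6)\Leftrightarrow(7)$ is immediate from the facts finitely generated projective $\Leftrightarrow$ finitely presented flat.

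The main cycle is $(7)\Rightarrow(2)\Rightarrow(3)\Rightarrow(4)\Rightarrow(1)$. For $(7)\Rightarrow(2)$, combine Proposition \ref{2.7}(3) (which under (7) gives $\mathcal F_1^{\fp}\mathcal F(R)=\mathcal P_1\mathcal F(R)$) with Lemma \ref{3.1.1}; $(2)\Rightarrow(3)\Rightarrow(4)$ follow since any flat module---in particular $R$---is $\mathcal F_1^{\fp}$-flat. The decisive step is $(4)\Rightarrow(1)$: given a finitely generated flat $K\subseteq L=R^n$, put $M:=L/K\in\mathcal F_1^{\fp}$; by (4), $\Tor_1^R(M,R^\Lambda)=0$ for every set $\Lambda$, yielding an exact sequence $0\to K\otimes_R R^\Lambda \to L\otimes_R R^\Lambda \to M\otimes_R R^\Lambda \to 0$. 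The canonical comparison maps at $L$ and $M$ into $L^\Lambda$ and $M^\Lambda$ are isomorphisms (the first because $L$ is finitely generated free, the second by Lenzing's theorem characterising finitely presented modules via commutation with direct products in the tensor factor), so the five lemma forces $K\otimes_R R^\Lambda \to K^\Lambda$ to be an isomorphism for every $\Lambda$; Lenzing again yields $K$ finitely presented.

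The remaining pieces are short. $(2)\Leftrightarrow(5)$ holds because inverse limits embed as submodules of products and $\mathcal F_1^{\fp}\mathcal F(R)$ is submodule-closed by Proposition \ref{2.2}(3); $(4)\Leftrightarrow(9)$ is the definition of torsionless. The chain $(10)\Leftrightarrow(11)\Leftrightarrow(12)$ is Corollary \ref{2.10}; $(7)\Rightarrow(10)$ uses Proposition \ref{2.7}(3); and $(11)\Rightarrow(7)$ follows since any finitely presented $N\in\lim\limits_{\lr}\mathcal P_1$ factors its identity through some $P\in\mathcal P_1$, making $N$ a direct summand of $P$ and hence placing it in $\mathcal P_1^{\fp}$. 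For $(6)\Rightarrow(8)$, apply Lemma \ref{3.1.3} with $n=2$, $S=\mathbb Z$, $C=\mathbb Q/\mathbb Z$ and $B=M^+$, together with the adjointness isomorphism $\Ext^1_R(N,M^+)\cong\Tor_1^R(N,M)^+$, to obtain $\Tor_1^R(N,M^{++})\cong\Tor_1^R(N,M)^{++}$ for every $N\in\mathcal F_1^{\fp}$; since the double-dual of an abelian group vanishes iff the group vanishes, (8) follows. The reverse direction of (8) is automatic from the pure embedding $M\hookrightarrow M^{++}$ and Proposition \ref{2.2}(3). The principal obstacle I foresee is closing (8) back into the cycle; the plan is to prove $(8)\Rightarrow(4)$ by noting that $R^{(\Lambda)}$ is flat---hence $\mathcal F_1^{\fp}$-flat---so by (8) the module $(R^{(\Lambda)})^{++}$ is $\mathcal F_1^{\fp}$-flat, and then exhibiting $R^\Lambda$ as a submodule of a suitable character double-dual via the pure inclusion $\bigoplus R\subseteq\prod R$ and the splitting it induces upon dualising.
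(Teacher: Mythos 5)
Your proposal is correct and follows essentially the same route as the paper: the same Lenzing-type argument via \cite[Theorem 3.2.22]{EJ} to recover finite presentation of the kernel from (4), the same use of Lemma \ref{3.1.1} and \cite[Lemma 2.10]{CD} for products of $2$-presented modules, Lemma \ref{3.1.3} plus the adjointness isomorphism for (8), Corollary \ref{2.10} for (10)--(12), and the character-module purity trick to close (8) back into the cycle (your alternative closure $(11)\Rightarrow(7)$ via a finitely presented module splitting off a term of a direct system in $\mathcal P_1$ is a standard and valid substitute for the paper's $10)\Rightarrow 2)$). The only adjustment needed is in your final step $(8)\Rightarrow(4)$: the pure inclusion to dualise is $\bigoplus_{\Lambda}R^{+}\subseteq\prod_{\Lambda}R^{+}$, so that $\prod_{\Lambda}R^{++}\cong\big(\bigoplus_{\Lambda}R^{+}\big)^{+}$, which contains $R^{\Lambda}$, splits off as a direct summand of $\big(R^{(\Lambda)}\big)^{++}=\big(\prod_{\Lambda}R^{+}\big)^{+}$ --- not $\bigoplus R\subseteq\prod R$, whose dual splits in the wrong direction; this is exactly how the paper runs its implication $8)\Rightarrow 2)$.
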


\begin{proof} 1) $\Rightarrow$ 2) Let $N\in$ $\mathcal F_1^{\fp}$. Then there exists an exact sequence $0 \longrightarrow K\longrightarrow L\longrightarrow N\longrightarrow 0$ such that $L$ is a finitely generated free $R$-module and $K$ is a finitely generated module, and thus $N$ is $2$-presented. Now, let $(M_{\alpha})_{\alpha\in\Lambda}$ be a family of $\mathcal F_1^{\fp}$-flat modules. Therefore, as $N$ is $2$-presented, by \cite[Lemma 2.10]{CD}, $$\Tor_{1}^R(N,\prod M_{\alpha}) \cong \prod \Tor_{1}^R(N,M_{\alpha}).$$ Hence (2) easily holds. \\
	2) $\Rightarrow$ 3) and 3) $\Rightarrow$ 4) are clear.\\
	2) $\Rightarrow$ 5) It is direct as an inverse limit of $\mathcal F_1^{\fp}$-flat modules is isomorphic to a submodule of the direct product of these modules and submodules of $\mathcal F_1^{\fp}$-flat modules are $\mathcal F_1^{\fp}$-flat.\\
	5) $\Rightarrow$ 2) It is straightforward as direct products are particular cases of inverse limits.\\  
	4) $\Rightarrow$ 6) Let $M\in$ $\mathcal F_1^{\fp}$ and let $I$ be an arbitrary set. Then there exists an exact sequence $0\longrightarrow K\longrightarrow R^n\longrightarrow M\longrightarrow 0$, where $K$ is a finitely generated flat $R$-module and $n\geq 1$ is an integer. Since $\prod_{i\in I}R$ is $\mathcal F_1^{\fp}$-flat and $M\in$ $\mathcal F_1^{\fp}$, we get $\Tor_1^R(M,\prod_{i\in I}R)=0$ and thus the sequence $$0\longrightarrow K\otimes_R\prod_{i\in I}R\longrightarrow R^n\otimes_R\prod_{i\in I}R\longrightarrow M\otimes_R\prod_{i\in I}R\longrightarrow 0$$ is exact. This induces the following commutative diagram:
	$$\xymatrix{0\ar[r]&K\otimes_{R}\prod_{i\in I}R\ar[r]\ar[d]^{\phi_{1}} &{R^n}\otimes_{R}\prod_{i\in I}R\ar[r]\ar[d]^{\phi_{2}} &M\otimes_{R}\prod_{i\in I}R\ar[r]\ar[d]^{\phi_{3}} &0\\
		0\ar[r]&K^I\ar[r]&{(R^n)}^I\ar[r]&M^I\ar[r]&0}$$
	since $\phi_{2}$ and $\phi_{3}$ are isomorphisms by \cite[Theorem 3.2.22]{EJ}, then  $\phi_{1}$ is isomorphism by the five lemma. Applying again \cite[Theorem 3.2.22]{EJ} yields that $K$ is finitely presented, so that $M$ is $2$-presented.\\ 
	6) $\Rightarrow$ 7) Clearly, $\mathcal P_1^{\fp}\subseteq \mathcal F_1^{\fp}$. Let $M\in\mathcal F_1^{\fp}$ and let $0\lr K\lr L\lr M\lr 0$ be an exact sequence such that $L$ is a finitely generated free module and $K$ is finitely presented. Now, since $M\in \mathcal F_1$, $K$ is a finitely presented flat module and thus a finitely generated projective module. It follows that $M\in\mathcal P_1^{\fp}$. Consequently, $\mathcal F_1^{\fp}=\mathcal P_1^{\fp}$.\\
	7) $\Rightarrow$ 1) Let $M$ be a finitely generated flat right submodule of a finitely generated free module $L$. Then, considering the exact sequence $0\lr M\lr L\lr K=\dfrac LM\lr 0$, it is clear that $K\in F_1^{\fp}$, so that $K\in\mathcal P_1^{\fp}$. It follows that $M$ is projective. Consequently, $R$ is right $\mathcal F_1^{\fp}$-coherent proving (1).\\
	6) $\Rightarrow$ 8)  As, by Proposition \ref{2.2}, $\mathcal F_1^{\fp}\mathcal F(R)$ is stable under taking submodules, it suffices to prove that if $M$ is $\mathcal F_1^{\fp}$-flat, then so is $M^{++}$ since $M$ is isomorphic to a submodule of $M^{++}$. Assume that $M$ is $\mathcal F_1^{\fp}$-flat. Let $H\in$ $\mathcal F_1^{\fp}$. Then, by (6), $H$ is $2$-presented. Hence, by Lemma \ref{3.1.3} and the above adjointness isomorphism for derived functors, $$\begin{array}{lll}\Tor_1^R(H,M^{++})&\cong&\Ext^1_R(H,M^+)^+\\
	&\cong&\Tor_1^R(H,M)^{++} \\
	&=&0.\end{array}$$ Therefore $M^{++}$ is $\mathcal F_1^{\fp}$-flat, as desired.\\
	8) $\Rightarrow$ 2) Asssume that (8) holds. Let $(M_{i})_{i}$ be a family of $\mathcal F_1^{\fp}$-flat $R$-modules. By  Proposition \ref{2.2}, $\oplus M_{i}$ is $\mathcal F_1^{\fp}$-flat, so 
	that, using (8),  $(\oplus M_{i})^{++}\cong (\prod\limits_i M_{i}^+)^+$ is $\mathcal F_1^{\fp}$-flat. Further, note that, by \cite[Lemma 1 (1)]{CS}, $\oplus M_{i}^+$ is a pure submodule of $\prod\limits_i M_{i}^+$ and thus the sequence $0\lr \oplus M_{i}^+\lr \prod\limits_i M_{i}^+\lr K\lr 0$, with $K:=\dfrac {\prod\limits_i M_{i}^+}{\oplus M_{i}^+}$, is pure exact. Hence the exact sequence of character modules $$0\lr K^+\lr (\prod\limits_i M_{i}^+)^+\longrightarrow (\oplus M_{i}^+)^+\longrightarrow 0$$ splits. Therefore, $(\oplus M_{i}^+)^+$, being isomorphic to a submodule of $(\prod\limits_i M_{i}^+)^+$, is $\mathcal F_1^{\fp}$-flat, by Proposition \ref{2.2}. Thus 
	$\prod\limits_i M_{i}^{++}\cong (\oplus M_{i}^+)^+$ is $\mathcal F_1^{\fp}$-flat. Since $\prod\limits_i M_{i}$ is a submodule of $\prod\limits_i M_{i}^{++}$, we get, by Proposition \ref{2.2}, that $\prod\limits_i M_{i}$ is $\mathcal F_1^{\fp}$-flat, as contended.\\
	
\noindent 4) $\Rightarrow$ 9) Let $M$ be a torsionless left module. Then there exists a set $I$ such that $M\subseteq R^I$. As $R^I$ is $\mathcal F_1^{\fp}$-flat, then $M$ is $\mathcal F_1^{\fp}$-flat.\\
9) $\Rightarrow$ 4) It is straightforward.\\
10)	$\Leftrightarrow$ 11) $\Leftrightarrow$ 12) hold by Corollary \ref{2.10}.\\
7) $\Rightarrow$ 11) It is direct as $\lim\limits_{\lr}\mathcal P_1^{\fp}=\lim\limits_{\lr}\mathcal P_1$.\\
10) $\Rightarrow$ 2) It suffices to apply Lemma \ref{3.1.1} completing the proof of the theorem.
	\end{proof} 

The next results discuss different properties and characteristics of right $\mathcal F_1^{\fp}$-coherent rings.

\begin{prop}\label{3.3}
	Let $R$ be a right $\mathcal F_1^{\fp}$-coherent ring. Then the following assertions are equivalent.
	
	1) $\mathcal F_1^{\fp}\mathcal F(R)=\mathcal F_1\mathcal F(R)$;
	
	2) $\mathcal P_1\mathcal F(R)=\mathcal F_1\mathcal F(R)$;
	
	3) $\widehat {\mathcal P_1}=\mathcal F_1$;
	
	4) $\lim\limits_{\lr}\mathcal F_1^{\fp}=\mathcal F_1$;
	
	5) $\lim\limits_{\lr}\mathcal P_1=\mathcal F_1$.
	
\end{prop}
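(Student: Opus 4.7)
The plan is to leverage the toolkit already developed, namely Lemma~\ref{2.1}, Propositions~\ref{2.4}, \ref{2.5}, \ref{2.7}, and especially Theorem~\ref{3.2}, and show that under the $\mathcal F_1^{\fp}$-coherence hypothesis all five assertions reduce, via Tor-torsion theory bookkeeping, to the single statement $\widehat{\mathcal P_1}=\mathcal F_1$. No new homological input is needed; the work is in correctly chaining the known identifications.

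First, I would use Theorem~\ref{3.2} under the standing hypothesis: from item (10) of that theorem we get $\mathcal F_1^{\fp}\mathcal F(R)=\mathcal P_1\mathcal F(R)$, giving directly (1)$\Leftrightarrow$(2); and from item (11) we get $\lim\limits_{\lr}\mathcal F_1^{\fp}=\lim\limits_{\lr}\mathcal P_1$, giving directly (4)$\Leftrightarrow$(5). This disposes of two of the four equivalences almost for free.

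Next, I would establish (2)$\Leftrightarrow$(3). By Proposition~\ref{2.4}(1), $(\mathcal F_1,\mathcal F_1\mathcal F(R))$ is a Tor-torsion theory, whence $\widehat{\mathcal F_1}=\mathcal F_1$. Also, $(\widehat{\mathcal P_1},\mathcal P_1\mathcal F(R))$ is a Tor-torsion theory by Proposition~\ref{2.7}(1). Since two Tor-torsion theories coincide exactly when their right (Tor-orthogonal) classes agree, the equality $\widehat{\mathcal P_1}=\mathcal F_1=\widehat{\mathcal F_1}$ is equivalent to $\mathcal P_1^{\top}=\mathcal F_1^{\top}$, i.e.\ $\mathcal P_1\mathcal F(R)=\mathcal F_1\mathcal F(R)$, which is (2). (Alternatively one may cite Lemma~\ref{2.1} directly applied to $\mathcal P_1$ and $\mathcal F_1$.)

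Finally, for (3)$\Leftrightarrow$(5), I would simply invoke Proposition~\ref{2.7}(2), which asserts $\lim\limits_{\lr}\mathcal P_1=\widehat{\mathcal P_1}$; then $\widehat{\mathcal P_1}=\mathcal F_1$ and $\lim\limits_{\lr}\mathcal P_1=\mathcal F_1$ are literally the same statement. Combining: (1)$\Leftrightarrow$(2) (Theorem~\ref{3.2}), (2)$\Leftrightarrow$(3) (Tor-torsion theory equality), (3)$\Leftrightarrow$(5) (Proposition~\ref{2.7}), and (4)$\Leftrightarrow$(5) (Theorem~\ref{3.2}) closes the cycle. There is no real obstacle here; the only point requiring a line of care is the observation that two Tor-torsion theories are determined by either of their two classes, which is the content of the final sentence preceding Lemma~\ref{2.1}.
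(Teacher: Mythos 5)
Your proposal is correct and follows essentially the same route as the paper: both reduce all five assertions to $\widehat{\mathcal P_1}=\mathcal F_1$ by invoking Theorem~\ref{3.2} (items (10)--(11)) for the coherence-dependent identifications $\mathcal F_1^{\fp}\mathcal F(R)=\mathcal P_1\mathcal F(R)$ and $\lim\limits_{\lr}\mathcal F_1^{\fp}=\lim\limits_{\lr}\mathcal P_1$, together with Proposition~\ref{2.7}'s equality $\lim\limits_{\lr}\mathcal P_1=\widehat{\mathcal P_1}$ and the Tor-torsion-theory correspondence between $\mathcal P_1\mathcal F(R)=\mathcal F_1\mathcal F(R)$ and $\widehat{\mathcal P_1}=\mathcal F_1$. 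Your write-up merely makes explicit the bookkeeping that the paper compresses into two sentences.
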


\begin{proof} It is direct as, when $R$ is right $\mathcal F_1^{\fp}$-coherent, $\lim\limits_{\lr}\mathcal F_1^{\fp}=\lim\limits_{\lr}\mathcal P_1=\widehat {\mathcal P_1}$ and $\mathcal F_1^{\fp}\mathcal F(R)=\mathcal P_1\mathcal F(R)$. Besides, note that $\mathcal P_1\mathcal F(R)=\mathcal F_1\mathcal F(R)$ if and only if $\widehat {\mathcal P_1}=\mathcal F_1$.

\end{proof}

The following corollary shows that $\lim\limits_{\lr}\mathcal F_1^{\fp}\neq\mathcal F_1$, in general. The example given by Bazzoni and Herbera, namely, $R:=\dfrac {k[x,y,z]}{(z^2,zx,zy)}$ with $k$ is a field and $x,y,z$ are indeterminates over $k$ satisfying $\lim\limits_{\lr}\mathcal P_1\neq\mathcal F_1$ \cite[Example 8.5]{BH} is an evidence underpinning this fact. 

\begin{cor}\label{3.3.1}
	Let $R$ be a right coherent ring. Then the following assertions are equivalent:
	
	1) $\lim\limits_{\lr}\mathcal F_1^{\fp}=\mathcal F_1$;
	
	2) $\lim\limits_{\lr}\mathcal P_1=\mathcal F_1$.
\end{cor}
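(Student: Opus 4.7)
The plan is to derive this corollary as a direct specialization of Proposition \ref{3.3}. First I would observe that, by Proposition \ref{3.1}(2), every right coherent ring is right $\mathcal F_1^{\fp}$-coherent, so the standing hypothesis of Proposition \ref{3.3} is met. The equivalence of (1) and (2) in the corollary is then literally the equivalence of items (4) and (5) in Proposition \ref{3.3}.

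For a slightly more self-contained argument, I would prove the two implications separately. The implication (2) $\Rightarrow$ (1) needs no coherence assumption at all: it is precisely Corollary \ref{2.6}. For (1) $\Rightarrow$ (2), the key point is that right coherence forces $\mathcal F_1^{\fp}=\mathcal P_1^{\fp}$. Indeed, a finitely generated flat submodule of a free module is finitely presented by coherence, hence projective, so $R$ is $\mathcal F_1^{\fp}$-coherent and item (7) of Theorem \ref{3.2} applies.

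Taking direct limits of the equality $\mathcal F_1^{\fp}=\mathcal P_1^{\fp}$ gives
$$\lim\nolimits_{\lr}\mathcal F_1^{\fp}=\lim\nolimits_{\lr}\mathcal P_1^{\fp},$$
and by Proposition \ref{2.5}(1) the right-hand side equals $\lim\nolimits_{\lr}\mathcal P_1$. Combined with the hypothesis $\lim\nolimits_{\lr}\mathcal F_1^{\fp}=\mathcal F_1$, this yields (2).

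There is no substantive obstacle here: the corollary is a direct specialization of the machinery set up in Section 3, and both directions are one-line invocations of results already established (Corollary \ref{2.6}, Proposition \ref{3.1}, Proposition \ref{2.5}, and Theorem \ref{3.2}).
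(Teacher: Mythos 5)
Your proposal is correct and follows essentially the same route as the paper: the paper's proof is exactly your first paragraph, namely that Proposition \ref{3.1} makes a right coherent ring right $\mathcal F_1^{\fp}$-coherent, whence the equivalence is the case (4) $\Leftrightarrow$ (5) of Proposition \ref{3.3}. Your more detailed unpacking via Corollary \ref{2.6}, Theorem \ref{3.2}(7) and Proposition \ref{2.5}(1) is also sound, but it is only an expansion of the same argument.
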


\begin{proof}
	It is straightforward as, by Proposition \ref{3.1}, $R$ is $\mathcal F_1^{\fp}$-coherent.
\end{proof}

The following result records that $\mathcal F_1^{\fp}$-coherent rings include all rings $R$ such that $\lim\limits_{\lr}\mathcal P_1=\mathcal F_1$.

\begin{cor}
	Let $R$ be a ring. If $\lim\limits_{\lr}\mathcal P_1=\mathcal F_1$, then $R$ is a right $\mathcal F_1^{\fp}$-coherent ring.
\end{cor}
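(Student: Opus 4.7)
The plan is to reduce to the equivalences already established in Theorem \ref{3.2}, in particular the equivalence $(1)\Leftrightarrow(11)$ which states that $R$ is right $\mathcal F_1^{\fp}$-coherent if and only if $\lim\limits_{\lr}\mathcal F_1^{\fp}=\lim\limits_{\lr}\mathcal P_1$. So all I need to do is establish this single equality of direct-limit closures under the hypothesis $\lim\limits_{\lr}\mathcal P_1=\mathcal F_1$.

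First, I would invoke Corollary \ref{2.6}, which gives immediately that the hypothesis $\lim\limits_{\lr}\mathcal P_1=\mathcal F_1$ forces $\lim\limits_{\lr}\mathcal F_1^{\fp}=\mathcal F_1$. Combining these two equalities yields $\lim\limits_{\lr}\mathcal P_1=\lim\limits_{\lr}\mathcal F_1^{\fp}$, which is precisely condition (11) of Theorem \ref{3.2}.

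Applying the equivalence (1)$\Leftrightarrow$(11) of Theorem \ref{3.2} then gives that $R$ is right $\mathcal F_1^{\fp}$-coherent, completing the proof. There is no real obstacle: this is essentially a direct chaining of the previously established corollary and the main theorem of the section, and the result simply records the fact (already observed informally in the introduction) that the class of $\mathcal F_1^{\fp}$-coherent rings is wide enough to contain every ring satisfying $\lim\limits_{\lr}\mathcal P_1=\mathcal F_1$.
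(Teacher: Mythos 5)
Your proof is correct and follows essentially the same route as the paper, whose proof simply states that the result ``follows easily from Theorem \ref{3.2} and Corollary \ref{2.6}''; you have just spelled out the same chain (Corollary \ref{2.6} gives $\lim\limits_{\lr}\mathcal F_1^{\fp}=\mathcal F_1=\lim\limits_{\lr}\mathcal P_1$, hence condition (11) of Theorem \ref{3.2} holds, hence $R$ is right $\mathcal F_1^{\fp}$-coherent).
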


\begin{proof} It follows easily from Theorem \ref{3.2} and Corollary \ref{2.6}.
\end{proof}

It is known that any semi-hereditary ring is of weak global dimension less than or equal to $1$. The converse do not hold in general. Next, we show that along with the $\mathcal F_1^{\fp}$-coherence the last statement ensures the semi-hereditarity of the ring.
	
\begin{prop}\label{3.4}
	Let $R$ be a ring. Then the following assertions are equivalent.
	
	1) $R$ is right semi-hereditary;
	
	2) $\wdim(R)\leq 1$ and $R$ is right $\mathcal F_1^{\fp}$-coherent.
\end{prop}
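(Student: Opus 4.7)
The plan is to use Theorem \ref{3.2} (specifically the equivalence $(1) \Leftrightarrow (7)$ there) as the key bridge, together with the standard fact that semi-hereditarity of $R$ is equivalent to projectivity of every finitely generated right ideal.

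For the implication $(1) \Rightarrow (2)$, I would invoke two known facts. First, it is classical that a right semi-hereditary ring satisfies $\wdim(R) \leq 1$ (every finitely generated right ideal being projective, hence flat, forces $\fd_R(R/I) \leq 1$ for every finitely generated right ideal $I$, and this suffices by a standard characterization of weak dimension). Second, Proposition \ref{3.1}(1) already asserts that any right semi-hereditary ring is $\mathcal F_1^{\fp}$-coherent, so nothing remains to prove.

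For the converse $(2) \Rightarrow (1)$, the strategy is to show that every finitely generated right ideal $I$ of $R$ is projective. Consider the short exact sequence
\[
0 \lr I \lr R \lr R/I \lr 0.
\]
Since $I$ is finitely generated, $R/I$ is finitely presented. The assumption $\wdim(R) \leq 1$ gives $\fd_R(R/I) \leq 1$, so $R/I \in \mathcal F_1^{\fp}$. Now apply Theorem \ref{3.2}, equivalence $(1) \Leftrightarrow (7)$: since $R$ is right $\mathcal F_1^{\fp}$-coherent, we have $\mathcal F_1^{\fp}=\mathcal P_1^{\fp}$, and so $R/I \in \mathcal P_1^{\fp}$, meaning $\pd_R(R/I)\leq 1$. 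From the displayed exact sequence this forces $I$ to be projective, as the kernel of a surjection from a projective module onto a module of projective dimension at most one. Hence $R$ is right semi-hereditary.

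There is no serious obstacle here; the argument is essentially a bookkeeping exercise once Theorem \ref{3.2} is available. The only thing to be slightly careful about is ensuring that $R/I$ is genuinely finitely presented (which is immediate since $I$ is finitely generated and $R$ is itself cyclic), so that it lies in $\mathcal F_1^{\fp}$ rather than merely $\mathcal F_1$; this is precisely what makes the $\mathcal F_1^{\fp}$-coherence assumption rather than a stronger hypothesis sufficient to conclude projectivity of $I$.
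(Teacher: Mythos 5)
Your proof is correct. The forward implication is handled exactly as in the paper (citing Proposition \ref{3.1} and the classical fact that semi-hereditary rings have weak global dimension at most $1$). For the converse, the paper argues more directly than you do: since $\wdim(R)\leq 1$, the finitely generated right ideal $I$ is flat, and then the \emph{definition} of right $\mathcal F_1^{\fp}$-coherence (a finitely generated flat submodule of a free module is finitely presented, hence projective) immediately gives that $I$ is projective. You instead pass to the quotient $R/I$, observe that it lies in $\mathcal F_1^{\fp}$, and invoke the equivalence $(1)\Leftrightarrow(7)$ of Theorem \ref{3.2} to get $\pd_R(R/I)\leq 1$ and hence $I$ projective. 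The two arguments carry the same content --- the implication $(7)\Rightarrow(1)$ in Theorem \ref{3.2} is proved by essentially your argument run in reverse --- so your route is a legitimate repackaging rather than a genuinely new idea; the paper's version is marginally more economical because it never leaves the ideal $I$, while yours has the small advantage of quoting a stated equivalence rather than unwinding the definition.
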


\begin{proof} 1) $\Rightarrow$ 2) It is clear as, by Proposition \ref{3.1}, any right semi-hereditary ring is right $\mathcal F_1^{\fp}$-coherent.\\
	2) $\Rightarrow$ 1) Let $I$ be a finitely generated ideal of $R$. As $\wdim(R)\leq 1$, then $I$ is flat. Now, since $R$ is right $\mathcal F_1^{\fp}$-coherent, we get that $I$ is projective, as desired.
\end{proof}

\begin{cor}\label{3.5}
	Let $R$ be a right $\mathcal F_1^{\fp}$-coherent ring. Then the following assertions are equivalent.
	
	1) $R$ is right semi-hereditary;
	
	1) $\wdim(R)\leq 1$.
\end{cor}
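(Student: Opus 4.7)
The plan is to derive this corollary as an immediate consequence of Proposition \ref{3.4}, since it essentially repackages that proposition under a standing hypothesis.

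For the implication (1) $\Rightarrow$ (2), I would simply note that right semi-hereditary rings always satisfy $\wdim(R) \leq 1$; this is the easy (and well-known) half that is built into Proposition \ref{3.4}. So this direction requires no additional argument beyond citing that fact, and it does not even use the $\mathcal F_1^{\fp}$-coherence hypothesis.

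For the converse (2) $\Rightarrow$ (1), the hypothesis of the corollary gives us $\mathcal F_1^{\fp}$-coherence, while assumption (2) supplies $\wdim(R) \leq 1$. Together these are precisely the two conditions (2) of Proposition \ref{3.4}, so applying that proposition yields that $R$ is right semi-hereditary.

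There is no real obstacle here, since all the work has already been done in Proposition \ref{3.4}; the corollary is just an observation that, once $\mathcal F_1^{\fp}$-coherence is assumed as a blanket hypothesis, the two characterizing conditions for semi-hereditarity in Proposition \ref{3.4} collapse to the single weak global dimension condition. Thus the proof I would write is a one-liner: \emph{combine both directions of Proposition \ref{3.4}.}
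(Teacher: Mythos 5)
Your proposal is correct and matches the paper's intent exactly: the corollary is stated immediately after Proposition \ref{3.4} with no separate proof, precisely because it is the one-line consequence you describe (one direction is the standard fact that right semi-hereditary rings have weak global dimension at most one, and the other is Proposition \ref{3.4} applied under the standing $\mathcal F_1^{\fp}$-coherence hypothesis). Nothing is missing.
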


Corollary \ref{3.5} provides a criterium to build examples of rings which are not $\mathcal F_1^{\fp}$-coherent. In fact, it suffices to choose any ring $R$ such that $\wdim(R)\leq 1$ and $R$ is not right semi-hereditary.\\

The last theorems of this section characterize rings $R$ such that any left  $R$-module is $\mathcal F_1^{\fp}$-flat (resp., $\mathcal F_1$-flat). Note that, for an $R$-module $M$, $\E(M)$ denotes the injective envelope of $M$.

\begin{thm}\label{3.6}
	Let $R$ be a ring. Then the following assertions are equivalent.
	
	1) Any finitely generated $R$-module is $\mathcal F_1^{\fp}$-flat;
	
		2) Any $R$-module is $\mathcal F_1^{\fp}$-flat;
		
		3) Any injective module is $\mathcal F_1^{\fp}$-flat;
		
		4) Any quotient of $\mathcal F^{\fp}_1$-flat module is $\mathcal F^{\fp}_1$-flat;
		
		5) $R$ is right $\mathcal F_1^{\fp}$-coherent and $\E(S)$ is $\mathcal F_1^{\fp}$-flat for each simple module $S$;
		
		
		
		6) Any element of $\mathcal F_1^{\fp}$ is projective.
\end{thm}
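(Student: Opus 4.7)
The plan is to prove Theorem~\ref{3.6} by taking statement~(2) as a hub and showing each of the remaining assertions is equivalent to it; the routine implications come first and the substantive content lies in the two steps (5)~$\Rightarrow$~(2) and (2)~$\Rightarrow$~(6).

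For the easy equivalences: (1)~$\Leftrightarrow$~(2) follows because every module is a direct limit of its finitely generated submodules, combined with the closure of $\mathcal F_1^{\fp}\mathcal F(R)$ under submodules and direct limits (Proposition~\ref{2.2}); (2)~$\Leftrightarrow$~(3) follows by embedding every module in an injective and again using closure under submodules; (2)~$\Leftrightarrow$~(4) follows because every module is a quotient of a free module, which is trivially $\mathcal F_1^{\fp}$-flat; (6)~$\Rightarrow$~(2) is immediate since $\Tor_1^R(H,-)=0$ whenever $H$ is projective; and (2)~$\Rightarrow$~(5) holds because under (2) the module $\prod R$ is $\mathcal F_1^{\fp}$-flat, which by Theorem~\ref{3.2} forces $\mathcal F_1^{\fp}$-coherence, while each $\E(S)$, being a module, is $\mathcal F_1^{\fp}$-flat by (2).

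For (5)~$\Rightarrow$~(2), the plan is to show that $\{\E(S): S \text{ simple}\}$ cogenerates the whole category of left $R$-modules, with no chain condition on $R$. Given $M$ and $0 \neq x \in M$, apply Zorn's lemma inside the cyclic submodule $Rx$ to produce a submodule $N_x$ maximal among those not containing $x$; by the standard argument every submodule of $Rx/N_x$ pulls back to either $N_x$ or $Rx$, so $Rx/N_x =: S_x$ is simple. The surjection $Rx \to S_x$ sends $x$ to a nonzero element, and by injectivity of $\E(S_x)$ extends to $\phi_x : M \to \E(S_x)$. The product map $\phi : M \to \prod_{x \neq 0} \E(S_x)$ is then injective since $\phi_x(x) \neq 0$ for each $x \neq 0$. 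By hypothesis~(5) and Theorem~\ref{3.2}, under $\mathcal F_1^{\fp}$-coherence products of $\mathcal F_1^{\fp}$-flat modules are $\mathcal F_1^{\fp}$-flat, so the target is $\mathcal F_1^{\fp}$-flat, and Proposition~\ref{2.2} yields that $M$ is $\mathcal F_1^{\fp}$-flat.

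For (2)~$\Rightarrow$~(6): since (2)~$\Rightarrow$~(5), the ring $R$ is $\mathcal F_1^{\fp}$-coherent, so Theorem~\ref{3.2}~(7) gives $\mathcal F_1^{\fp}=\mathcal P_1^{\fp}$, and each $H \in \mathcal F_1^{\fp}$ is $2$-presented with $\pd_R(H)\leq 1$ and admits a resolution $0 \to K \to F \to H \to 0$ with $F$ finitely generated free and $K$ finitely generated projective. By (2) the character module $R^+ = \Hom_{\mathbb Z}(R,\mathbb Q/\mathbb Z)$ is $\mathcal F_1^{\fp}$-flat. Applying Lemma~\ref{3.1.3} with $n=2$, $B=R$, $C=\mathbb Q/\mathbb Z$ gives $\Tor_1^R(H,R^+)\cong \Ext^1_R(H,R)^+$, so $\Ext^1_R(H,R)=0$. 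Since $K$ is a direct summand of some $R^k$, it follows that $\Ext^1_R(H,K)$ is a direct summand of $\Ext^1_R(H,R)^k=0$, so the resolution splits and $H$ is projective. The main obstacle is the step (5)~$\Rightarrow$~(2): it rests on the Zorn's lemma construction of a simple quotient of $Rx$ on which the chosen element $x$ remains nonzero, which is what allows the injective envelopes of simple modules to cogenerate the whole module category without any chain condition on $R$; the role of $\mathcal F_1^{\fp}$-coherence is then essential to ensure that the resulting product of such envelopes is again $\mathcal F_1^{\fp}$-flat.
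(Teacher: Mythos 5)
Your proposal is correct, and its overall architecture (hub at assertion (2), the same easy closure arguments for (1)--(4), the cogeneration by $\prod\E(S_i)$ for (5)~$\Rightarrow$~(2), and Theorem~\ref{3.2} for (2)~$\Rightarrow$~(5)) matches the paper's proof; your Zorn's-lemma construction of a simple quotient of $Rx$ on which $x$ survives is exactly the standard fact the paper invokes with ``it is known that,'' and it is carried out correctly. The one place you genuinely diverge is (2)~$\Rightarrow$~(6). The paper argues via the Tor-orthogonal: if every left module is $\mathcal F_1^{\fp}$-flat then $^{\top}\mathcal F_1^{\fp}\mathcal F(R)=\mathcal F(R)$, and since always $\mathcal F_1^{\fp}\subseteq{}^{\top}(\mathcal F_1^{\fp\top})$, every $H\in\mathcal F_1^{\fp}$ is finitely presented and flat, hence projective --- a two-line argument needing no coherence and no duality lemma. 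You instead route through (2)~$\Rightarrow$~(5) to get $\mathcal F_1^{\fp}=\mathcal P_1^{\fp}$ and $2$-presentedness, then apply Lemma~\ref{3.1.3} with $B=R$, $C=\mathbb Q/\mathbb Z$ to deduce $\Ext_R^1(H,R)^+\cong\Tor_1^R(H,R^+)=0$, hence $\Ext_R^1(H,K)=0$ for the finitely generated projective kernel $K$, so the presentation splits. This is valid (the bimodule bookkeeping and the faithfulness of $(-)^+$ both check out), but it is noticeably heavier machinery for the same conclusion; the torsion-theory observation buys you (6) directly from (2) without passing through coherence at all. Your (6)~$\Rightarrow$~(2) via the trivial vanishing of $\Tor_1$ on projectives is the same content as the paper's orthogonality argument, just phrased more concretely.
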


\begin{proof} 1) $\Leftrightarrow$ 2) It holds by Proposition \ref{2.3}.\\
	2) $\Leftrightarrow$ 3) It is clear as any module is a submodule of an injective module and, by Proposition \ref{2.2}, $\mathcal F_1^{\fp}\mathcal F(R)$ is stable under submodules.\\
	 2) $\Leftrightarrow$ 4) It is straightforward as free modules are $\mathcal F_1^{\fp}$-flat and any module is a quotient of a free module.\\	
		2) $\Rightarrow$ 5) It is straightforward using Theorem \ref{3.2}.\\
	5) $\Rightarrow$ 2) Let $M$ be an $R$-module. It is known that there exists a set $I$ such that $M$ is isomorphic to a submodule of some direct product $\prod\limits_{i\in I}\E(S_i)$ with the $S_i$ are simple modules. As $R$ is supposed to be right $\mathcal F_1^{\fp}$-coherent, then, by Theorem \ref{3.2}, $\prod\limits_{i\in I}\E(S_i)$ is $\mathcal F_1^{\fp}$-flat. Hence, by Proposition \ref{2.2}, $M$ is $\mathcal F_1^{\fp}$-flat, as desired.\\ 
2) $\Rightarrow$ 6) Assume that $\mathcal F_1^{\fp}\mathcal F(R)$ coincides with the class of all left $R$-modules. Let $\Mod^{\fp}(R)$ designate the class of finitely presented right $R$-modules. Then  $^{\top}\mathcal F_1^{\fp}\mathcal F(R)=\mathcal F(R)$, so that, $$^{\top}\mathcal F_1^{\fp}\mathcal F(R)\cap \Mod^{\fp}(R)=\mathcal F(R)\cap\Mod(R)^{\fp}\subseteq \mathcal P(R).$$ Also, it is easily seen that $\mathcal F_1^{\fp}\subseteq$ $^{\top}\mathcal F_1^{\fp}\mathcal F(R)$. Hence $$\mathcal F_1^{\fp}\subseteq\text { }^{\top}\mathcal F_1^{\fp}\mathcal F(R)\cap \Mod^{\fp}(R)\subseteq \mathcal P(R),$$ as desired.\\ 
	6) $\Rightarrow$ 2) Assume that $\mathcal F_1^{\fp}\subseteq \mathcal P(R)$. Then $\mathcal P(R)^{\top}\subseteq \mathcal F_1^{\fp \top}=\mathcal F_1^{\fp}\mathcal F(R)$. Hence $\mathcal F_1^{\fp}\mathcal F(R)$ coincides with the class of all left $R$-modules completing the proof.\\

\end{proof}

\begin{thm}\label{4.6}
	Let $R$ be a ring. Then the following assertions are equivalent.
	
	1) Any finitely generated $R$-module is $\mathcal F_1$-flat;
	
	2) Any $R$-module is $\mathcal F_1$-flat; 
	
	3) Any injective $R$-module is $\mathcal F_1$-flat;
	
	4) Any quotient of $\mathcal F_1$-flat module is $\mathcal F_1$-flat;

	
	
	5) $\mathcal F_1=\mathcal F(R)$;
	
	6) $\FFD(R)=0$.\\
	Moreover, if $R$ is a commutative Noetherian ring, then the above assertions are equivalent to the following one:
	
	7) $\depth(R_p)=0$ for each prime ideal $p$ of $R$.
	
\end{thm}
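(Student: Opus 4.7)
The plan is to close a cycle among the first six conditions by formal manipulations based on Propositions \ref{2.2} and \ref{2.3}, and then to derive the remaining equivalence with (7) in the commutative Noetherian case by invoking the Auslander--Bridger criterion cited in \cite{AB}.

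First I would handle 1) $\Leftrightarrow$ 2) $\Leftrightarrow$ 3) $\Leftrightarrow$ 4). The equivalence 1) $\Leftrightarrow$ 2) is immediate from Proposition \ref{2.3}, since every module is the direct limit of its finitely generated submodules and any finitely generated submodule of a module is itself a finitely generated module. The equivalence 2) $\Leftrightarrow$ 3) holds because every $R$-module embeds in an injective envelope and $\mathcal F_1\mathcal F(R)$ is closed under submodules by Proposition \ref{2.2}. Finally, 2) $\Leftrightarrow$ 4) follows because every module is a quotient of a free module, and free modules are $\mathcal F_1$-flat by Proposition \ref{2.2}(2) and (4).

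For 2) $\Leftrightarrow$ 5), I would unfold condition (2) as the statement $\Tor_1^R(H,M)=0$ for every $H\in\mathcal F_1$ and every left $R$-module $M$; letting $M$ vary, this is precisely the assertion that each $H\in\mathcal F_1$ is flat, and since $\mathcal F(R)\subseteq\mathcal F_1$ always holds, it is equivalent to $\mathcal F_1=\mathcal F(R)$. The implication 6) $\Rightarrow$ 5) is exactly Proposition \ref{2.12}. Conversely, assuming $\mathcal F_1=\mathcal F(R)$, I would induct on the flat dimension: given $M$ with $\fd_R(M)=n<\infty$ and $n\geq 1$, choosing a flat resolution and cutting at the first syzygy $K=\ker(F_0\to M)$ yields $\fd_R(K)=n-1$, so iterating eventually drives the flat dimension down to $1$ and then, by hypothesis, to $0$; hence $\FFD(R)=0$.

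The equivalence 6) $\Leftrightarrow$ 7) in the commutative Noetherian case then follows directly from \cite[Theorem 2.4]{AB} applied to $R$ itself, which is the same tool used in Proposition \ref{2.12.1}. The only step that is not a pure formality is 5) $\Rightarrow$ 6), and this is the main potential obstacle; however, it reduces to a routine induction on flat dimension once one recalls that passing to the first syzygy in a flat resolution decreases the flat dimension by exactly one.
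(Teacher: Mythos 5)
Your proposal is correct and follows essentially the same route as the paper: the cycle $1)\Leftrightarrow 2)\Leftrightarrow 3)\Leftrightarrow 4)$ via Propositions \ref{2.2} and \ref{2.3}, the link $2)\Leftrightarrow 5)$ (which the paper phrases through the Tor-torsion theory $(\mathcal F_1,\mathcal F_1\mathcal F(R))$ but which amounts to your direct unfolding of $\Tor_1$), and $6)\Leftrightarrow 7)$ via \cite[Theorem 2.4]{AB}. Your explicit syzygy induction for $5)\Rightarrow 6)$ is sound and merely spells out what the paper dismisses as ``direct.''
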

\begin{proof} 1) $\Leftrightarrow$ 2) It follows from Proposition \ref{2.3}.\\
	2) $\Leftrightarrow$ 3) It is clear as any module is a submodule of an injective module and, by Proposition \ref{2.2}, $\mathcal F_1\mathcal F(R)$ is stable under submodules.\\
	2) $\Leftrightarrow$ 4) It is straightforward as free modules are $\mathcal F_1$-flat and any module is a quotient of a free module.\\ 
	5) $\Leftrightarrow$ 6) It is direct.\\	
		2) $\Rightarrow$ 5) Assume that any $R$-module is $\mathcal F_1$-flat. Then, as $(\mathcal F_1,\mathcal F_1\mathcal F(R))$ is a torsion theory, we obtain $$\mathcal F_1=\text { }^{\top}\mathcal F_1\mathcal F(R)=\text { }^{\top}\lMod(R)=\mathcal F(R),$$ where $\lMod(R)$ denotes the class of all left $R$-modules, as desired.\\
	6) $\Leftrightarrow$ 7) It follows from \cite[Theorem 2.4]{AB} in the context of commutative Noetherian rings.\\ 
	6) $\Rightarrow$ 2) Assume that $\mathcal F_1=\mathcal F(R)$. Then, as $(\mathcal F_1,\mathcal F_1\mathcal F(R))$ is a Tor-torsion theory, we get $\mathcal F_1\mathcal F(R)=\mathcal F_1^{\top}=\mathcal F(R)^{\top}=\lMod(R)$ completing the proof.
	
\end{proof}

\section{$\mathcal F_1$-flat modules and specific rings}

This section aims at discussing inherent homological properties of a ring $R$ in terms of those of its $\mathcal F_1$-flat modules.\\

We begin by characterizing the homological dimensions of a ring $R$ by the vanishing of the functors Ext and Tor with respect to the class  $\mathcal F_1\mathcal F(R)$ of $\mathcal F_1$-flat modules.

\begin{prop}\label{5.1} Let $R$ be a ring. Let $M$ be a left $R$-module and $n\geq 1$ an integer. Then the following statements are equivalent:

		1) $\pd(M)\leq n$;
		
		2) $\Ext^{n+1}_{R}(M,N)=0$ for each $\mathcal F_1$-flat left $R$-module $N$;
		
		3) $\Ext^{n+1}_{R}(M,N)=0$ for each $\mathcal F_1^{\fp}$-flat left $R$-module $N$.
\end{prop}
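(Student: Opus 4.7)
The plan is to prove the cycle $(1) \Rightarrow (3) \Rightarrow (2) \Rightarrow (1)$. The first implication is immediate, since $\pd_R(M)\leq n$ forces $\Ext^{n+1}_R(M,-)$ to vanish on every left $R$-module. For $(3) \Rightarrow (2)$, recall from Proposition \ref{2.2}(1) that $\mathcal F_1\mathcal F(R)\subseteq \mathcal F_1^{\fp}\mathcal F(R)$; hence vanishing of $\Ext^{n+1}_R(M,-)$ on the larger class restricts at once to vanishing on the smaller. Consequently, the real content of the proposition is concentrated in the implication $(2) \Rightarrow (1)$.

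To establish $(2) \Rightarrow (1)$, I would fix a free resolution
$$\cdots \longrightarrow F_{n+1}\longrightarrow F_{n} \longrightarrow F_{n-1}\longrightarrow \cdots \longrightarrow F_0 \longrightarrow M \longrightarrow 0$$
of $M$ and consider its $n$-th and $(n+1)$-th syzygies, namely $K_n=\ker(F_{n-1}\to F_{n-2})$ and $K_{n+1}=\ker(F_n\to F_{n-1})$, which fit into the short exact sequence
$$0\longrightarrow K_{n+1}\longrightarrow F_n\longrightarrow K_n\longrightarrow 0.$$
By standard dimension shifting, $\Ext^{n+1}_R(M,N)\cong \Ext^1_R(K_n,N)$ for every left $R$-module $N$, so hypothesis (2) translates into: $\Ext^1_R(K_n,N)=0$ for each $\mathcal F_1$-flat module $N$.

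The pivotal observation is that $K_{n+1}$ is itself $\mathcal F_1$-flat. Indeed, $F_n$ is free, hence flat, and so $\Tor_1^R(H,F_n)=0$ for every right $R$-module $H$, which in particular makes $F_n$ $\mathcal F_1$-flat; and by Proposition \ref{2.2}(3) the class $\mathcal F_1\mathcal F(R)$ is stable under taking submodules, so $K_{n+1}\subseteq F_n$ belongs to $\mathcal F_1\mathcal F(R)$. Feeding $N:=K_{n+1}$ into the translated hypothesis yields $\Ext^1_R(K_n,K_{n+1})=0$, which means that the displayed short exact sequence splits. Therefore $K_n$ is a direct summand of the free module $F_n$, hence projective, so $\pd_R(M)\leq n$ as required.

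There is no serious obstacle in this scheme: the whole argument rests on the single observation that syzygies in a free resolution are automatically $\mathcal F_1$-flat, which in turn lets us apply the weakest of the three hypotheses to them. A small subtlety worth underlining is that the very same syzygy argument also delivers $(3) \Rightarrow (1)$ directly (since $K_{n+1}\in\mathcal F_1\mathcal F(R)\subseteq\mathcal F_1^{\fp}\mathcal F(R)$), but routing it through (2) produces a neater cycle and isolates the minimal hypothesis actually used.
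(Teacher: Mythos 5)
Your proposal is correct and follows essentially the same route as the paper: the paper first isolates the key step as Lemma \ref{5.2} (a module with $\Ext^1$ vanishing against all $\mathcal F_1$-flat modules is projective, proved by splitting $0\to K\to P\to M\to 0$ using that $K$, as a submodule of a projective, is $\mathcal F_1$-flat by Proposition \ref{2.2}) and then applies it to the $n$-th syzygy via dimension shifting, exactly as you do. You merely inline that lemma into the syzygy sequence rather than stating it separately; the underlying observation and the splitting argument are identical.
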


The proof requires the following preparatory result.

\begin{lem}\label{5.2}
	Let $R$ be a ring and let $M$ be a left $R$-module. Then the following assertions are equivalent:
	
	1) $M$ is projective.
	
	2) $\Ext_R^1(M,N)=0$ for each $\mathcal F_1$-flat left module $N$;
	
3)	$\Ext^{1}_{R}(M,N)=0$ for each $\mathcal F_1^{\fp}$-flat left $R$-module $N$.
\end{lem}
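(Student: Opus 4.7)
The plan is to establish the cycle of implications $(1) \Rightarrow (3) \Rightarrow (2) \Rightarrow (1)$, with only the last implication requiring any real work.

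The first two implications are essentially formal. For $(1) \Rightarrow (3)$, if $M$ is projective then the functor $\Ext^1_R(M,-)$ vanishes on every module, so in particular on every $\mathcal F_1^{\fp}$-flat module. For $(3) \Rightarrow (2)$, I would invoke Proposition \ref{2.2}(1), which gives the inclusion $\mathcal F_1\mathcal F(R) \subseteq \mathcal F_1^{\fp}\mathcal F(R)$; thus any $\mathcal F_1$-flat module is automatically $\mathcal F_1^{\fp}$-flat, and so the vanishing hypothesis of (3) covers every $N$ appearing in (2).

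The substantive implication is $(2) \Rightarrow (1)$. The idea is to build a short exact sequence witnessing the projectivity of $M$ and then show it splits. Concretely, I would pick a surjection $F \twoheadrightarrow M$ from a free left $R$-module $F$, yielding
\[
0 \longrightarrow K \longrightarrow F \longrightarrow M \longrightarrow 0.
\]
Now $F$ is free, hence flat, hence $\mathcal F_1$-flat (since $\Tor_1^R(H,F)=0$ for every $H$, including every $H \in \mathcal F_1$). Proposition \ref{2.2}(3) tells us that $\mathcal F_1\mathcal F(R)$ is stable under submodules, so $K$ is $\mathcal F_1$-flat as well. Applying hypothesis (2) to $N = K$ gives $\Ext^1_R(M,K)=0$, so the displayed sequence splits. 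Therefore $M$ is a direct summand of the free module $F$, and hence projective.

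I do not anticipate any genuine obstacle; the result really just records that the kernel of a free cover automatically lies in $\mathcal F_1\mathcal F(R)$, which reduces projectivity of $M$ to a single instance of the $\Ext$-vanishing assumption. The only thing that might look delicate is the implicit use of the closure of $\mathcal F_1\mathcal F(R)$ under submodules, but this is already proved in Proposition \ref{2.2}(3), so the argument is entirely routine once that closure property is in hand.
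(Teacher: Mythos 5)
Your proof is correct and follows essentially the same route as the paper: both take a projective (or free) cover of $M$, use the closure of $\mathcal F_1\mathcal F(R)$ under submodules (Proposition \ref{2.2}) to see that the kernel is $\mathcal F_1$-flat, and conclude that the sequence splits. No issues.
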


\begin{proof} 1) $\Rightarrow$ 3) $\Rightarrow$ 2) are clear.\\
	2) $\Rightarrow$ 1) Assume that $\Ext_R^1(M,N)=0$ for each $\mathcal F_1$-flat module $N$. Consider the following short exact sequence of left $R$-modules $ 0\longrightarrow K\longrightarrow P\longrightarrow M\longrightarrow 0 $ with $P$ projective. Then, as $P$ is $\mathcal F_1$-flat, $K$ is $\mathcal F_1$-flat, by Proposition \ref{2.2}. Therefore $\Ext^{1}_{R}(M, K)=0$ and thus the sequence $ 0\longrightarrow K\longrightarrow P\longrightarrow M\longrightarrow 0 $ splits. It follows that M is projective, as desired.
\end{proof}

\begin{proof}[Proof of Proposition \ref{5.1}] 1) $\Rightarrow$ 3) $\Rightarrow$ 2) are clear.\\
	 2) $\Rightarrow$ 1) If $n=0$, then we are done using Lemma \ref{5.2}. Assume that $n\geq 1$. Let $$ \xymatrix{\cdots\ar[r]& P_{1}\ar[r]^{d_{1}}&P_{0}\ar[r]^{\varepsilon} & M\ar[r]& 0} $$ be a projective resolution of $M$ and let $N$ be an $\mathcal F_1$-flat left $R$-module. Let $K_{0}=$ Ker$(\varepsilon)$ and $K_{i}=$ Ker$(d_{i})$ for each integer $i\geq 1$. Then $$\Ext^{1}_{R}(K_{n},N)\cong \Ext^{n+1}_{R}(M, N)=0.$$ Hence, by Lemma \ref{5.2}, $K_{n}$ is projective, and thus $\pd_R(M)\leq n$.
\end{proof} 

\begin{cor}\label{5.3} Let $R$ be a ring. Then 
$$\begin{array}{lll} \gdim(R)&=&\max \{\id_R(M):M\in\mathcal F_1\mathcal F(R)\}\\
&=&\max \{\id_R(M):M\in\mathcal F_1^{\fp}\mathcal F(R)\}.\end{array}$$\end{cor}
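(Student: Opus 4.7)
The plan is a straightforward sandwich argument built on Proposition \ref{5.1}, reducing the computation of $\gdim(R)$ to a statement about $\Ext$-vanishing against $\mathcal F_1$-flat (resp.\ $\mathcal F_1^{\fp}$-flat) modules.

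First I would observe the inclusion $\mathcal F_1\mathcal F(R)\subseteq\mathcal F_1^{\fp}\mathcal F(R)$ (already noted in Proposition \ref{2.2}), which immediately yields the obvious inequalities
\[
\max\{\id_R(M):M\in\mathcal F_1\mathcal F(R)\}\leq\max\{\id_R(M):M\in\mathcal F_1^{\fp}\mathcal F(R)\}\leq\gdim(R),
\]
since the injective dimension of any $R$-module is bounded by $\gdim(R)$ by definition. This handles two of the three inequalities for free.

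To close the loop, I would prove $\gdim(R)\leq\max\{\id_R(N):N\in\mathcal F_1\mathcal F(R)\}$. Set $n:=\max\{\id_R(N):N\in\mathcal F_1\mathcal F(R)\}$; if $n=\infty$ there is nothing to show, so assume $n$ is finite. Then by the very definition of injective dimension, $\Ext^{n+1}_R(M,N)=0$ for every left $R$-module $M$ and every $\mathcal F_1$-flat $N$. By the implication $(2)\Rightarrow(1)$ of Proposition \ref{5.1}, this forces $\pd_R(M)\leq n$ for every left $R$-module $M$, hence $\gdim(R)\leq n$. Combining with the chain of inequalities above gives
\[
\gdim(R)\leq\max\{\id_R(N):N\in\mathcal F_1\mathcal F(R)\}\leq\max\{\id_R(N):N\in\mathcal F_1^{\fp}\mathcal F(R)\}\leq\gdim(R),
\]
and the three quantities coincide.

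There is essentially no obstacle here: the content lives entirely in Proposition \ref{5.1}, and the corollary is its formal consequence once one rewrites the vanishing condition on $\Ext$ as an injective dimension bound. The only mild point to verify is the infinite case, which is handled by the contrapositive of the argument above (if the max over $\mathcal F_1$-flat modules were finite, so would $\gdim(R)$).
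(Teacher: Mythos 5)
Your proposal is correct and matches the paper's own proof essentially verbatim: both establish the easy chain of inequalities from the inclusion $\mathcal F_1\mathcal F(R)\subseteq\mathcal F_1^{\fp}\mathcal F(R)$ and the trivial bound by $\gdim(R)$, then close the circle by applying the implication $(2)\Rightarrow(1)$ of Proposition \ref{5.1} when the maximum over $\mathcal F_1$-flat modules is finite. No gaps.
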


\begin{proof}
	First, note that $$\gdim(R)\geq\max \{\id_R(M):M\in\mathcal F_1^{\fp}\mathcal F(R)\} \geq\max \{\id_R(M) : M\in\mathcal F_1\mathcal F(R)\}.$$ If $\max \{\id_R(M):M\in\mathcal F_1\mathcal F(R)\}=+\infty$, then we are done. Assume
	that there exists an integer $n\geq 0$ such that $\id_R(M)\leq n$ for any  $\mathcal F_1$-flat $R$-module $M$. Then $\Ext^{n+1}_R(K,M)=0$ for any $\mathcal F_1$-flat $R$-module $M$ and any $R$-module $K$. Hence, by
	Proposition \ref{5.1}, $\pd_R(K)\leq n$ for any $R$-module $K$. It follows that $\gdim(R) \leq n $ and thus the desired equalities follow.
	\end{proof}

The next two results are immediate consequences of Corollary \ref{5.3}.

\begin{cor}\label{5.4} Let $R$ be a ring. Then the following assertions are equivalent.
	
1)	$R$ is left semi-simple;

2) Any $\mathcal F_1$-flat left $R$-module is injective;

3) Any $\mathcal F_1^{\fp}$-flat left $R$-module is injective.
\end{cor}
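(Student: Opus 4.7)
The plan is to derive this corollary directly from Corollary \ref{5.3} together with the basic fact that $R$ is left semi-simple if and only if $\gdim(R)=0$.

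First I would establish the cycle of implications (1) $\Rightarrow$ (3) $\Rightarrow$ (2) $\Rightarrow$ (1). The implication (1) $\Rightarrow$ (3) is immediate: if $R$ is left semi-simple, then $\gdim(R)=0$, so every left $R$-module is injective; in particular every $\mathcal F_1^{\fp}$-flat module is injective. The implication (3) $\Rightarrow$ (2) follows at once from the inclusion $\mathcal F_1\mathcal F(R)\subseteq\mathcal F_1^{\fp}\mathcal F(R)$ recorded in Proposition \ref{2.2}(1): any $\mathcal F_1$-flat module is $\mathcal F_1^{\fp}$-flat and hence injective by (3).

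The only substantive step is (2) $\Rightarrow$ (1). If every $\mathcal F_1$-flat left $R$-module is injective, then $\id_R(M)=0$ for all $M\in\mathcal F_1\mathcal F(R)$, so $\max\{\id_R(M):M\in\mathcal F_1\mathcal F(R)\}=0$. By Corollary \ref{5.3}, this maximum equals $\gdim(R)$, whence $\gdim(R)=0$ and therefore $R$ is left semi-simple.

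There is no real obstacle here; the content is entirely absorbed by Corollary \ref{5.3}. The only thing to be mindful of is to note that the $\mathcal F_1$-flat (resp. $\mathcal F_1^{\fp}$-flat) class is nonempty and contains enough modules for the $\max$ in Corollary \ref{5.3} to be meaningful in the $=0$ case; this is automatic since every projective (even every submodule of a projective) module lies in $\mathcal F_1\mathcal F(R)$ by Proposition \ref{2.2}.
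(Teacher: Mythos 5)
Your proof is correct and follows exactly the route the paper intends: Corollary \ref{5.4} is stated there as an immediate consequence of Corollary \ref{5.3}, and your cycle of implications, reducing everything to the identity $\gdim(R)=\max\{\id_R(M):M\in\mathcal F_1\mathcal F(R)\}=\max\{\id_R(M):M\in\mathcal F_1^{\fp}\mathcal F(R)\}$ together with the characterization of left semi-simple rings by $\gdim(R)=0$, is precisely the argument being left to the reader.
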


\begin{cor} \label{5.5}Let $R$ be a ring. The following assertion are equivalent:
	
	1) $R$ is left hereditary; 
	
	2) $\id_R(M)\leq 1$ for each $\mathcal F_1$-flat left $R$-module $M$.
\end{cor}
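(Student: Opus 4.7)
The plan is to deduce the corollary directly from Corollary \ref{5.3}. By definition, $R$ is left hereditary precisely when $\gdim(R)\leq 1$, so it suffices to show that this numerical condition is equivalent to the vanishing assertion that $\id_R(M)\leq 1$ for every $\mathcal F_1$-flat left $R$-module $M$.

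The implication 1) $\Rightarrow$ 2) is immediate: if $R$ is left hereditary, then $\id_R(M)\leq\gdim(R)\leq 1$ for every left $R$-module, and a fortiori for every $\mathcal F_1$-flat one. For the converse 2) $\Rightarrow$ 1), Corollary \ref{5.3} expresses the global dimension as
\[ \gdim(R)=\max\{\id_R(M):M\in\mathcal F_1\mathcal F(R)\}, \]
so the hypothesis that every $\mathcal F_1$-flat module has injective dimension at most $1$ directly forces $\gdim(R)\leq 1$, i.e., $R$ is left hereditary.

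No real obstacle arises here, since the heavy lifting has already been done in Proposition \ref{5.1} and Corollary \ref{5.3}; the argument is an immediate unpacking of the global dimension formula. If one prefers to avoid appeal to Corollary \ref{5.3}, an equivalent route is to apply Proposition \ref{5.1} with $n=1$ directly: under assumption 2), for any left $R$-module $K$ and any $\mathcal F_1$-flat module $N$, one has $\Ext^2_R(K,N)=0$ because $\id_R(N)\leq 1$; Proposition \ref{5.1} then yields $\pd_R(K)\leq 1$ for every left $R$-module $K$, whence $\gdim(R)\leq 1$ as required.
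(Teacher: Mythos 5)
Your proof is correct and follows exactly the route the paper intends: the corollary is stated there as an immediate consequence of Corollary \ref{5.3}, i.e., reading off $\gdim(R)\leq 1$ from the formula $\gdim(R)=\max\{\id_R(M):M\in\mathcal F_1\mathcal F(R)\}$, which is precisely your argument (and your alternative via Proposition \ref{5.1} with $n=1$ is just the proof of Corollary \ref{5.3} unwound). Nothing to add.
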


This provides another characterization of a Dedekind domains.

\begin{cor} Let $R$ be an integral domain. Then the following assertions are equivalent:
	
1)	$R$ is a Dedekind domain;

2) $\id_R(M)\leq 1$ for each torsion-free $R$-module $M$. 
\end{cor}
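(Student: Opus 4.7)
The plan is to recognize that this corollary is a direct specialization of Corollary \ref{5.5} to integral domains, combined with the identification of $\mathcal{F}_1$-flat modules with torsion-free modules over a domain (Proposition \ref{2.17}). Since an integral domain is a Dedekind domain precisely when it is hereditary, the statement translates Corollary \ref{5.5} into the language of torsion-free modules in the domain setting.

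More concretely, I would proceed as follows. First, recall that for an integral domain $R$, being a Dedekind domain is equivalent to $R$ being hereditary (every ideal, equivalently every submodule of a projective, is projective); this is a classical fact and needs no additional work here. Next, invoke Corollary \ref{5.5}, which asserts that $R$ is left hereditary if and only if $\id_R(M)\leq 1$ for every $\mathcal{F}_1$-flat left $R$-module $M$. Finally, apply Proposition \ref{2.17}, which identifies the class $\mathcal{F}_1\mathcal{F}(R)$ of $\mathcal{F}_1$-flat modules with the class of torsion-free modules when $R$ is an integral domain. Chaining these three equivalences gives the result.

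There is no substantial obstacle in the argument: the entire proof is a short combination of the two cited results with the classical characterization of Dedekind domains. The proof should reduce to essentially a single line, noting the two references and the fact that a Dedekind domain is exactly a hereditary integral domain.
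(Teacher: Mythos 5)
Your proof is correct and matches the paper's own argument, which simply combines Proposition \ref{2.17} (identifying $\mathcal F_1$-flat modules with torsion-free modules over a domain) with Corollary \ref{5.5} (hereditary $\Leftrightarrow$ $\id_R(M)\leq 1$ for all $\mathcal F_1$-flat $M$), together with the classical fact that Dedekind domains are exactly the hereditary integral domains. No issues.
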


\begin{proof}
	Combine Corollary \ref{2.17} and Corollary \ref{5.5}.
\end{proof}

\begin{prop}\label{5.6}
	Let $R$ be a ring and let $M$ be a left $R$-module. Let $n\geq 1$ be an integer. If $\Ext_R^n(N,M)=0$ for each $\mathcal F_1$-flat $R$-module $N$, then $\id_R(M)\leq n$.
\end{prop}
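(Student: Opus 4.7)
The plan is to reduce the statement to the generalized Baer criterion: an $R$-module $M$ has $\id_R(M)\leq n$ if and only if $\Ext_R^{n+1}(R/I,M)=0$ for every left ideal $I$ of $R$. So the goal is to establish the vanishing $\Ext_R^{n+1}(R/I,M)=0$ for an arbitrary left ideal $I$.

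The key observation is Proposition \ref{2.2}(4): every left ideal $I$ of $R$ is $\mathcal F_1$-flat as a left $R$-module. Therefore the hypothesis applies directly to $I$, yielding $\Ext_R^n(I,M)=0$.

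Now I would splice in the projective resolution of $R/I$ coming from the short exact sequence
\[
0\longrightarrow I\longrightarrow R\longrightarrow R/I\longrightarrow 0.
\]
The associated long exact sequence in $\Ext_R(-,M)$ contains the piece
\[
\Ext_R^n(R,M)\longrightarrow \Ext_R^n(I,M)\longrightarrow \Ext_R^{n+1}(R/I,M)\longrightarrow \Ext_R^{n+1}(R,M).
\]
Since $R$ is projective and $n\geq 1$, both outer terms vanish, and the middle map forces $\Ext_R^{n+1}(R/I,M)\cong \Ext_R^n(I,M)=0$.

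Since $I$ was an arbitrary left ideal, the generalized Baer criterion then gives $\id_R(M)\leq n$, completing the argument. The argument is essentially a one-step dimension shift, so no real obstacle arises; the only point worth emphasizing is the use of Proposition \ref{2.2}(4) together with the hypothesis $n\geq 1$ which is needed to guarantee that $\Ext_R^n(R,M)=0$ on the left end of the long exact sequence.
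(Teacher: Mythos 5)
Your proof is correct. It is the same one-step dimension shift as the paper's, but run over a different family of test modules: the paper takes an arbitrary left $R$-module $K$, presents it as $0\to N\to P\to K\to 0$ with $P$ projective, observes that $N$ is $\mathcal F_1$-flat because $\mathcal F_1\mathcal F(R)$ is closed under submodules (Proposition \ref{2.2}(3)), and concludes $\Ext_R^{n+1}(K,M)\cong\Ext_R^{n}(N,M)=0$ for every $K$, which is the definition of $\id_R(M)\leq n$. You instead restrict attention to the cyclic modules $R/I$, use Proposition \ref{2.2}(4) (itself a special case of \ref{2.2}(3)) to see that $I$ is $\mathcal F_1$-flat, and then invoke the generalized Baer criterion to pass from the vanishing of $\Ext_R^{n+1}(R/I,M)$ to $\id_R(M)\leq n$. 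What your route buys is that the hypothesis only needs to be tested on left ideals: your argument in fact establishes the formally stronger statement that ``$\Ext_R^{n}(I,M)=0$ for every left ideal $I$'' already forces $\id_R(M)\leq n$. What it costs is the extra external input of the Baer criterion for injective dimension, which the paper's version avoids entirely. Both arguments are complete, and your remark about where $n\geq 1$ is used (to kill the term $\Ext_R^{n}(R,M)$ at the left end of the long exact sequence) is accurate.
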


\begin{proof} Assume that $\Ext_R^n(N,M)=0$ for each $\mathcal F_1$-flat $R$-module $N$. Let $K$ be an $R$-module and $0\longrightarrow N\longrightarrow P\longrightarrow K\longrightarrow 0$ be an exact sequence such that $P$ is a projective module. Then $N$ is $\mathcal F_1$-flat and thus $\Ext_R^{n+1}(K,M)\cong\Ext_R^{n}(N,M)=0$. It follows that $\id_R(M)\leq n$, as desired.
\end{proof}

\begin{cor}\label{5.7} Let $R$ be a ring. Then\\ 
	 $$\max\{\pd_R(M):M\in\mathcal F_1\mathcal F(R)\} \leq\gdim(R)\leq 1+\max \{\pd_R(M):M\in\mathcal F_1\mathcal F(R)\}.$$
\end{cor}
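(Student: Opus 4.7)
\bigskip

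\noindent\textbf{Proof proposal for Corollary \ref{5.7}.}
Let me write $n:=\max\{\pd_R(M):M\in\mathcal F_1\mathcal F(R)\}$, allowing $n=+\infty$. The lower bound is immediate: since $\mathcal F_1\mathcal F(R)\subseteq\lMod(R)$, the supremum of $\pd_R$ taken over the smaller class cannot exceed $\gdim(R)$, which is by definition the supremum of $\pd_R$ over all left $R$-modules. So only the upper bound $\gdim(R)\leq n+1$ requires real argument, and it is vacuous when $n=+\infty$; I will assume $n<+\infty$.

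The plan is to shift an arbitrary module into $\mathcal F_1\mathcal F(R)$ by a single projective cover step, using Proposition \ref{2.2}(3). Let $N$ be any left $R$-module and choose a short exact sequence
$$0\longrightarrow K\longrightarrow P\longrightarrow N\longrightarrow 0$$
with $P$ projective (e.g., free). Since $P$ is projective, $\Tor_1^R(H,P)=0$ for \emph{every} right $R$-module $H$, and in particular for every $H\in\mathcal F_1$; thus $P\in\mathcal F_1\mathcal F(R)$. By Proposition \ref{2.2}(3) the class $\mathcal F_1\mathcal F(R)$ is stable under submodules, so $K\in\mathcal F_1\mathcal F(R)$ as well.

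Consequently $\pd_R(K)\leq n$ by the definition of $n$, and the short exact sequence above then gives $\pd_R(N)\leq \pd_R(K)+1\leq n+1$. Since $N$ was arbitrary, $\gdim(R)\leq n+1$, which is the desired inequality.

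There is essentially no obstacle here: the entire argument rests on the two easy observations that every projective module lies in $\mathcal F_1\mathcal F(R)$ (hence every first syzygy of an arbitrary module lies there, by Proposition \ref{2.2}(3)) and that projective dimension drops by at most one under this single syzygy shift. No appeal to the finitely presented variant $\mathcal F_1^{\fp}\mathcal F(R)$ or to Proposition \ref{5.1} is needed, though in fact Proposition \ref{5.1} could be used as an alternative route by testing $\Ext^{n+2}$ against $\mathcal F_1$-flat modules.
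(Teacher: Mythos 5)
Your proof is correct, and it takes a mildly but genuinely different route from the paper's. The paper deduces the upper bound by passing through injective dimension: from $\pd_R(M)\leq n$ for all $M\in\mathcal F_1\mathcal F(R)$ it gets $\Ext_R^{n+1}(M,N)=0$ for every $\mathcal F_1$-flat $M$ and every $N$, then invokes Proposition \ref{5.6} to conclude $\id_R(N)\leq n+1$ for all $N$, hence $\gdim(R)\leq n+1$. You instead bound projective dimension directly: every first syzygy $K$ of an arbitrary module $N$ lies in $\mathcal F_1\mathcal F(R)$ (projectives are $\mathcal F_1$-flat and the class is submodule-closed by Proposition \ref{2.2}(3)), so $\pd_R(N)\leq\pd_R(K)+1\leq n+1$. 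The two arguments share the same engine --- the observation that submodules of projectives are $\mathcal F_1$-flat, which is exactly what drives the proof of Proposition \ref{5.6} as well, just applied there in the second variable of $\Ext$ --- but your version is more self-contained, avoiding Proposition \ref{5.6} entirely, and is arguably the cleaner dualization. Your handling of the trivial lower bound and of the case $n=+\infty$ is also fine.
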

 
\begin{proof} It suffices to prove that $\gdim(R)\leq 1+\max \{\pd_R(M):M\in\mathcal F_1\mathcal F(R)\}.$ Effectively, assume that $\max \{\pd_R(M):M\in\mathcal F_1\mathcal F(R)\}\leq n$ for some positive integer $n$. Then Ext$_R^{n+1}(M,N)=0$ for each $\mathcal F_1$-flat $R$-module $M$ and each $R$-module $N$. Hence, by Proposition \ref{5.6}, $\id_R(N)\leq n+1$ for each $R$-module $N$, and thus $\gdim(R)\leq n+1$. It follows that $\gdim(R)\leq 1+\max \{\pd_R(M):M\in\mathcal F_1\mathcal F(R)\}$ completing the proof.
	\end{proof}

\begin{cor}\label{5.8} Let $R$ be any ring. Then the following statements are equivalent:
	
	1) Every $\mathcal F_1$-flat left $R$-module is projective.
	
	2) $R$ is left perfect and left hereditary;
	
	3) $R$ is left perfect and $\wdim(R)\leq 1$;
\end{cor}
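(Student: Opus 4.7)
The plan is to prove the cycle $1) \Rightarrow 3) \Rightarrow 2) \Rightarrow 1)$, leveraging Corollary \ref{5.7} as the key bridge. All three conditions essentially express that the two dimensional hierarchies (projective vs.\ flat) collapse together at level one, so the proof is a matter of assembling previously established facts.

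For $1) \Rightarrow 3)$: First I would observe that every flat left $R$-module $F$ satisfies $\Tor_1^R(H,F)=0$ for every right $R$-module $H$, hence in particular for $H\in\mathcal F_1$. Thus $\mathcal F(R)\subseteq\mathcal F_1\mathcal F(R)$. By hypothesis, every $\mathcal F_1$-flat module is projective, so every flat module is projective, which is precisely the condition that $R$ be left perfect. Next, since under (1) we have $\pd_R(M)=0$ for every $M\in\mathcal F_1\mathcal F(R)$, Corollary \ref{5.7} gives $\gdim(R)\leq 1+0=1$, and therefore $\wdim(R)\leq \gdim(R)\leq 1$.

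The implication $3) \Rightarrow 2)$ is the standard fact that for a left perfect ring every flat module is projective, whence $\wdim(R)=\gdim(R)$; so $\wdim(R)\leq 1$ forces $\gdim(R)\leq 1$, i.e.\ $R$ is left hereditary. For $2) \Rightarrow 3)$ one uses the reverse inequality $\wdim(R)\leq\gdim(R)\leq 1$ directly.

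For $3) \Rightarrow 1)$: Assume $R$ is left perfect and $\wdim(R)\leq 1$. The condition $\wdim(R)\leq 1$ means that every right $R$-module has flat dimension at most one, so $\mathcal F_1=\Mod(R)$. By definition, a left $R$-module $M$ is $\mathcal F_1$-flat iff $\Tor_1^R(H,M)=0$ for every right $R$-module $H$, which is exactly saying that $M$ is flat. Thus $\mathcal F_1\mathcal F(R)=\mathcal F(R)$. Since $R$ is left perfect, every flat left $R$-module is projective, yielding (1).

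I do not anticipate any real obstacle: the only slightly delicate point is to notice that flat left modules are automatically $\mathcal F_1$-flat (so that $R$ being perfect is forced by (1)), and to correctly invoke Corollary \ref{5.7} with the trivial bound $0$ on the projective dimensions of $\mathcal F_1$-flat modules.
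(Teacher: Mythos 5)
Your proposal is correct and follows essentially the same route as the paper: both arguments rest on the observation that flat modules are $\mathcal F_1$-flat (so (1) forces $R$ to be left perfect) together with Corollary \ref{5.7} applied with $\max\{\pd_R(M):M\in\mathcal F_1\mathcal F(R)\}=0$, and both derive $3)\Rightarrow 1)$ from $\mathcal F_1=\Mod(R)$ when $\wdim(R)\leq 1$. The only cosmetic difference is that you close the cycle through (3) where the paper goes $1)\Rightarrow 2)$ directly; the content is identical.
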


\begin{proof}
	2) $\Leftrightarrow$ 3) It is obvious.\\
	3) $\Rightarrow$ 1) Assume that $R$ is left perfect and $\wdim(R)\leq 1$. Let $M$ be an $\mathcal F_1$-flat left $R$-module. Then, as $\mathcal F_1=\Mod(R)$, $M$ is flat. Now, since
	$R$ is left perfect, then $M$ is projective.\\
	1) $\Rightarrow$ 2) Assume that any $\mathcal F_1$-flat left $R$-module is projective. Then, in particular, any flat module is projective, so that $R$ is left perfect. Moreover, by Corollary \ref{5.7}, $R$ is left hereditary completing the proof.
\end{proof}

	
	

\begin{prop} \label{5.9}
	Let $R$ be a ring. Then either $R$ is von Neumann regular or $$\wdim(R)=1+\max\{\fd_R(M):M\in\mathcal F_1\mathcal F(R)\}.$$
\end{prop}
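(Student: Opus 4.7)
Plan for the proof of Proposition \ref{5.9}: Set $d:=\max\{\fd_R(M):M\in\mathcal F_1\mathcal F(R)\}$. If $R$ is von Neumann regular, the first alternative already holds, so assume from now on that $R$ is not von Neumann regular, i.e. $\wdim(R)\geq 1$; under this assumption the plan is to prove $\wdim(R)=1+d$ by establishing the two inequalities $\wdim(R)\leq 1+d$ and $\wdim(R)\geq 1+d$ separately.

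The cornerstone observation I would establish first is that every first syzygy is $\mathcal F_1$-flat. Concretely, if $0\lr K\lr P\lr N\lr 0$ is a short exact sequence of left $R$-modules with $P$ projective, then for any right module $H\in\mathcal F_1$ the long exact $\Tor$-sequence yields the isomorphism $\Tor_1^R(H,K)\cong\Tor_2^R(H,N)$, and the latter vanishes since $\fd_R(H)\leq 1$. Hence $K\in\mathcal F_1\mathcal F(R)$.

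For the inequality $\wdim(R)\leq 1+d$: given any left $R$-module $N$ with $\fd_R(N)\geq 1$, pick a projective presentation $0\lr K\lr P\lr N\lr 0$; by the previous paragraph $K\in\mathcal F_1\mathcal F(R)$, and a routine dimension-shift shows $\fd_R(N)=1+\fd_R(K)\leq 1+d$. Flat modules trivially satisfy $\fd_R(N)=0\leq 1+d$. Taking the supremum gives $\wdim(R)\leq 1+d$.

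For the inequality $\wdim(R)\geq 1+d$, fix $M\in\mathcal F_1\mathcal F(R)$ and set $n:=\wdim(R)$; I would prove $\fd_R(M)\leq n-1$. The case $n=1$ is immediate since then $\mathcal F_1=\Mod(R)$ and hence $\mathcal F_1\mathcal F(R)=\mathcal F(R)$, forcing $\fd_R(M)=0$. For $n\geq 2$ (and $n<\infty$), let $H$ be any right $R$-module and take a projective resolution of $H$; its $(n-2)$-th syzygy $K_{n-2}$ satisfies $\fd_R(K_{n-2})\leq\fd_R(H)-(n-1)\leq 1$, so $K_{n-2}\in\mathcal F_1$. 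Dimension shifting then gives
$$\Tor_n^R(H,M)\cong\Tor_1^R(K_{n-2},M)=0,$$
where the vanishing uses $M\in\mathcal F_1\mathcal F(R)$. Therefore $\fd_R(M)\leq n-1$, yielding $d\leq n-1$ and hence $\wdim(R)\geq 1+d$. The case $n=\infty$ is immediate from the upper bound combined with the existence of syzygies of arbitrarily high flat dimension. The main obstacle is precisely this lower bound: one must arrange the dimension-shift so that the syzygy used lands in $\mathcal F_1$, and it is exactly here that excluding the von Neumann regular case ($n\geq 1$, so $n-2\geq-1$ and the argument stays meaningful) is essential.
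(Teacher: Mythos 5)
Your proof is correct and follows essentially the same route as the paper: the paper isolates your two dimension-shifting arguments as Lemma \ref{5.10} (for $n\geq 1$, $\fd_R(M)\leq n$ iff $\Tor_n^R(M,N)=0$ for every $\mathcal F_1$-flat $N$, proved via the same observations that first syzygies are $\mathcal F_1$-flat and that high syzygies land in $\mathcal F_1$) and then chains the resulting equivalences, whereas you prove the two inequalities directly.
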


We need the following lemma.

\begin{lem}\label{5.10}
	Let $R$ be a ring and $M$ a right $R$-module. Let $n\geq 1$ be an integer. Then the following assertions are equivalent:
	
	1) $\fd_R(M)\leq n$;
	
	2) $\Tor_n^R(M,N)=0$ for each $\mathcal F_1$-flat $R$-module $N$.
\end{lem}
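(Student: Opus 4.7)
The plan is to establish both implications by standard dimension shifting along a flat resolution of $M$, exploiting the defining property of $\mathcal F_1$-flatness.

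For (1) $\Rightarrow$ (2), I would fix a flat resolution $\cdots \lr F_1 \lr F_0 \lr M \lr 0$ and pass to the $(n-1)$-st syzygy $K$ of $M$ (with the convention $K = M$ when $n = 1$). Since $\fd_R(M) \leq n$, we have $\fd_R(K) \leq 1$, so $K \in \mathcal F_1$. Iterated dimension shifting along the short exact sequences $0 \lr K_i \lr F_{i-1} \lr K_{i-1} \lr 0$ (with $K_0 = M$ and $K_{n-1} = K$) then yields the isomorphism $\Tor_n^R(M, N) \cong \Tor_1^R(K, N)$, and the latter vanishes because $N$ is $\mathcal F_1$-flat.

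For (2) $\Rightarrow$ (1), it suffices to verify that $\Tor_{n+1}^R(M, N') = 0$ for every left $R$-module $N'$. Given such $N'$, I would pick an exact sequence $0 \lr L \lr P \lr N' \lr 0$ with $P$ projective. Since $P$ is flat, it is $\mathcal F_1$-flat; and by Proposition \ref{2.2}(3), the submodule $L$ is then $\mathcal F_1$-flat as well. The hypothesis therefore gives $\Tor_n^R(M, L) = 0$, and dimension shifting across the displayed sequence produces $\Tor_{n+1}^R(M, N') \cong \Tor_n^R(M, L) = 0$, as required.

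The only slightly subtle point is the observation that flat modules, and hence (by Proposition \ref{2.2}(3)) any of their submodules, are automatically $\mathcal F_1$-flat; but this is immediate from the definition since $\Tor_1^R(H, F) = 0$ for any flat $F$ and any right module $H$. The rest of the argument is standard homological bookkeeping, so I do not anticipate a genuine obstacle.
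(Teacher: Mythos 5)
Your proof is correct and follows essentially the same route as the paper: for (1) $\Rightarrow$ (2) the paper runs the dimension shift as an induction on $n$ reducing to a syzygy in $\mathcal F_1$, which is exactly your iterated shift to the $(n-1)$-st syzygy, and for (2) $\Rightarrow$ (1) the paper uses the identical argument of shifting across $0 \lr N \lr F \lr K \lr 0$ with $F$ flat and invoking Proposition \ref{2.2} to see that $N$ is $\mathcal F_1$-flat.
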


\begin{proof}	1) $\Rightarrow$ 2) The argument uses induction on $n$. If $n=1$, that is, $M\in\mathcal F_1$, then $\Tor_1^R(M,N)=0$ for each $\mathcal F_1$-flat $R$-module $N$. Assume that $n\geq 2$ and that $\fd_R(M)\leq n$. Let $0\longrightarrow K\lr F\lr M\lr 0$ be an exact sequence such that $F$ is flat. Then $\fd_R(K)\leq n-1$ and thus, by induction, $\Tor_{n-1}^R(K,N)=0$ for each $\mathcal F_1$-flat $R$-module $N$. Hence $\Tor_n^R(M,N)\cong\Tor_{n-1}^R(K,N)=0$ for each $\mathcal F_1$-flat $R$-module $N$, as desired.  \\
	2) $\Rightarrow$ 1) Assume that $\Tor_n^R(M,N)=0$ for each $\mathcal F_1$-flat $R$-module $N$. Let $K$ be an $R$-module and let $0\longrightarrow N\longrightarrow F\longrightarrow K\longrightarrow 0$ be an exact sequence such that $F$ is a flat module. Then, by Proposition \ref{2.2}, $N$ is $\mathcal F_1$-flat. Hence $\Tor_{n+1}^R(M,K)\cong\Tor_n^R(M,N)=0$. It follows that $\fd_R(M)\leq n$, as desired.
\end{proof}

\begin{proof}[Proof of Proposition \ref{5.9}] Assume that $\wdim(R)\geq 1$. Let $n\geq 1$ be an integer. Then $\wdim(R)\leq n$ if and only if $\fd_R(M)\leq n$ for any right $R$-module $M$ if and only if $\Tor_n^R(M,N)=0$ for each right $R$-module $M$ and each $\mathcal F_1$-flat $R$-module $N$ (by Lemma \ref{5.10}) if and only if $\fd_R(N)\leq n-1$ for each $\mathcal F_1$-flat $R$-module $N$ if and only if $1+\max\{\fd_R(N):N\in\mathcal F_1\mathcal F(R)\}\leq n$. The desired equality then follows easily.
		\end{proof}

\end{document}